\newcommand{\g}{\mathfrak{g}}
\newcommand{\n}{\mathfrak{n}}
\renewcommand{\b}{\mathfrak{b}}
\renewcommand{\t}{\mathfrak{t}}
\renewcommand{\u}{\mathfrak{u}}
\newcommand{\C}{\mathbb{C}}
\newcommand{\Z}{\mathbb{Z}}
\newcommand{\Q}{\mathbb{Q}}
\newcommand{\W}{\mathbb{W}}
\newcommand{\LL}{\mathcal{L}}
\newcommand{\VV}{\mathcal{V}}
\newcommand{\MM}{\mathcal{M}}
\renewcommand{\O}{\mathcal{O}}
\newcommand{\K}{\mathcal{K}}
\newcommand{\Spec}{\textup{Spec}\,}
\newcommand{\Gr}{\textup{Gr}}
\newcommand{\Fl}{\textup{Fl}}
\newcommand{\semiinf}{\mathfrak{C}^{\frac{\infty}{2}}}
\DeclareMathOperator{\Hom}{Hom}
\DeclareMathOperator{\Ker}{Ker}
\DeclareMathOperator{\Ind}{Ind}
\newtheorem{Theorem}{Theorem}[section]
\newtheorem{Proposition}[Theorem]{Proposition}
\newtheorem{Q-Thm}[Theorem]{Quasi-Theorem}
\newtheorem{Lemma}[Theorem]{Lemma}
\newtheorem{Corollary}[Theorem]{Corollary}
\newtheorem{Conjecture}[Theorem]{Conjecture}
\theoremstyle{definition}
\newtheorem{Definition}{Definition}[section]
\theoremstyle{remark}
\title[Semi-infinite cohomology and Kazhdan-Lusztig equivalence]{Semi-infinite cohomology and Kazhdan-Lusztig equivalence at positive level}
\author{Chia-Cheng Liu}
\address{Department of Mathematics\\
University of Toronto\\
Ontario, Canada}
\email{ccliu@math.toronto.edu}
\begin{document}

\maketitle
\begin{abstract}
	A positive level Kazhdan-Lusztig functor is defined using Arkhipov-Gaitsgory duality for affine Lie algebras. The functor sends objects in the DG category of $G(\O)$-equivariant positive level affine Lie algebra modules to objects in the DG category of modules over Lusztig's quantum group at a root of unity. We prove that the semi-infinite cohomology functor for positive level modules factors through the Kazhdan-Lusztig functor at positive level and the quantum group cohomology functor with respect to the positive part of Lusztig's quantum group.
\end{abstract}

\section{Introduction}

\subsection{Basic setup} \label{basic_setup}
Let $G$ be a complex reductive algebraic group, $B$ (resp. $B^-$) the Borel (resp. opposite Borel) subgroup, $N$ (resp. $N^-$) the unipotent radical in $B$ (resp. $B^-$), and $T = B \cap B^-$ the maximal torus. Let $\g, \b, \n, \b^-, \n^-, \t$ be the corresponding Lie algebras. Let $W$ be the Weyl group of $G$, and $h^{\vee}$ the dual Coxeter number of $G$. Denote by $\check{G}$ the Langlands dual of $G$.

Throughout this paper there is an important parameter $\kappa \in \C^{\times}$, called the \textit{level}.
The level $\kappa$ is called \textit{negative} if $\kappa + h^{\vee} \notin \Q^{\geqslant 0}$,  \textit{positive} if $\kappa + h^{\vee} \notin \Q^{\leqslant 0}$, and \textit{critical} when $\kappa = \kappa_{\textup{crit}}:= -h^\vee$. Given a positive level $\kappa$, the reflected level $\kappa' := -\kappa -2h^{\vee}$ is negative. Clearly, if $\kappa$ is both positive and negative, then $\kappa$ is irrational.

All objects of algebro-geometric nature will be over the base field $\C$. Unless specified otherwise, by category we mean a \textit{DG category}, i.e. an accessible stable $\infty$-category enriched over $\textup{Vect}$, the unit DG category of complexes of $\C$-vector spaces. We denote the heart of the $t$-structure in a DG category $\mathsf{C}$ by $\mathsf{C}^\heartsuit$.

Denote by $\O$ the ring of functions on the formal disk at a point, and by $\K$ the ring of functions on the punctured formal disk. The $\C$-points of $\K$ after taking a coordinate $t$ is the Laurent series ring $\C((t))$, and that of $\O$ becomes its ring of integers $\C[[t]]$. For a $\C$-vector space $V$ we write $V(\K)$ for $V((t)) \equiv V \otimes \C((t))$. For a scheme $Y$ we write $Y(\K)$ as the formal loop space of $Y$, whose $\C$-points are $\textup{Maps}(\Spec \C((t)), Y)$. Similarly we define $V(\O)$ and $Y(\O)$.

Consider the loop group $G(\K)$ and its subgroup $G(\O)$. Define the evaluation map $\textup{ev}: G(\O) \to G$ by $t \mapsto 0$. The Iwahori subgroup of $G(\O)$ is defined as $I :=\textup{ev}^{-1}(B)$. We also define $I^0 := \textup{ev}^{-1}(N)$.

\subsection{The main result}
We study the semi-infinite cohomology of modules over affine Lie algebras. Our main result is a formula (Theorem \ref{*thm}) which relates the semi-infinite cohomology to the quantum group cohomology, as the modules over affine Lie algebras are linked to modules over quantum groups via the Kazhdan-Lusztig tensor equivalence.

The significance of this formula is twofold:
\begin{itemize}
	\item[1.] Integrated with the factorization structures appearing naturally in these objects, the formula paves the way for an alternative proof of the Kazhdan-Lusztig tensor equivalence, which is widely considered overly technical. Moreover, this new approach is valid for any non-critical level $\kappa$, whereas the original Kazhdan-Lusztig equivalence was only developed for negative levels. Hence our result indicates how to generalize the Kazhdan-Lusztig equivalence to arbitrary (non-critical) level conjecturally. We will further elaborate on factorization Kazhdan-Lusztig equivalences later in the Introduction (Section \ref{sec:intro_fact}).
	
	\item[2.] The formula fits in the recent progress on the quantum local geometric Langlands correspondence. The approach, proposed by D. Gaitsgory and J. Lurie, to the correspondence is to relate the Kac-Moody brane and the Whittaker brane by passing both to the quantum group world. The correspondence is ``quantum" in the sense that we should have a family of correspondences parametrized by a non-critical level $\kappa$, with the critical level being the degenerate case. Our formula provides the bridge from the Kac-Moody brane to the quantum group world for any (positive) level $\kappa$. We refer the readers to the talk notes of the Winter School on Local Geometric Langlands Theory \cite{Program} for more information. 
\end{itemize}

\subsection{Semi-infinite cohomology}
The semi-infinite cohomology was first introduced by B. Feigin in \cite{feigin1984}, as the mathematical counterpart of the BRST quantization in theoretical physics. While the notion of semi-infinite cohomology was later generalized to broader settings \cite{Voronov1993, Arkhipov1996, Arkhipov1997, Bezru2010}, our study stays within the original framework, namely, that for the affine Lie algebras.

Purely in terms of algebra, the setting includes a vector space $M$, a (finite-dimensional) Lie algebra $V$, and a Lie algebra action of $V(\K)$ on $M$. Note that the Lie bracket on $V(\K)$ is given by $$[v_1\otimes f(t), v_2 \otimes g(t)]:= [v_1,v_2]\otimes (f(t) \cdot g(t)).$$
We can roughly describe the semi-infinite cohomology with respect to $V(\K)$ as follows: we first take the Lie algebra cohomology along $V(\O)$, and then take the Lie algebra homology along $V(\K)/V(\O)$. The resulting complex in $\textup{Vect}$ is denoted by $\semiinf(V(\K),M)$.

We will consider a finite-dimensional reductive Lie algebra $\g$ over $\C$, and the semi-infinite cohomology with respect to certain Lie subalgebras of $\g(\K)$. The modules we apply the semi-infinite cohomology to, however, will be modules over the \textit{affine Lie algebra} associated to the loop algebra $\g(\K)$. Recall that an affine Lie algebra is a central extension of $\g(\K)$ by the central part $\C\textbf{1}$, with the extension determined by specifying a complex parameter $\kappa$. We obtain the notion of modules over $\hat{\g}_{\kappa}$, the affine Lie algebra at level $\kappa$, by requiring that $\textbf{1}$ always acts by the number 1. The semi-infinite cohomology with respect to $\n(\K)$ of such modules comes naturally with an action of the Heisenberg algebra $\hat{\t}$, which is a central extension of $\t(\K)$ by $\C\textbf{1}$.

A key feature here is that the semi-infinite cohomology introduces a canonical level shift, called the \textit{Tate shift}. More precisely, the semi-infinite cohomology $\semiinf(\n(\K), M)$ of a $\hat{\g}_{\kappa}$-module $M$ turns out to be a module over $\hat{\t}_{\kappa\textup{+shift}}$, the central extension whose 2-cocycle is determined by $\kappa - \kappa_{\textup{crit}}$. This is the true reason that we regard $\kappa_{\textup{crit}}$ as the point of origin when introducing the terminology of positive and negative level.

The abelian category of smooth representations of the Heisenberg algebra $\hat{\t}$ is semi-simple, whose simple objects are called the Fock modules $\pi_{\lambda}$ parametrized by the weights $\Hom(T, \mathbb{G}_m)$ \cite[Section 9.13]{Kac}. Therefore we often take the multiplicity of $\pi_{\mu}$ in $\semiinf(\n(\K), M)$ for each weight $\mu$, and call the resulting functor the $\mu$-component of the semi-infinite cohomology functor. This will appear on one side of our formula.

\subsection{The formula at irrational level}
At the level of abelian categories, we can model the semi-simple category $\g\textup{-mod}^{\textup{f.d.}}$of finite-dimensional $\g$-modules by the category $\hat{\g}_{\kappa}\textup{-mod}^{G(\O)}$ of $G(\O)$-integrable representations of $\hat{\g}_{\kappa}$ with $\kappa$ an irrational number. Recall that the complete set of finite-dimensional irreducible representations of a finite-dimensional simple Lie algebra $\g$ is given by $\lbrace V_{\lambda}: \lambda \textup{ dominant integral weights}\rbrace$. The category $\hat{\g}_{\kappa}\textup{-mod}^{G(\O)}$ is semi-simple with simple objects given by the \textit{Weyl modules} $\mathbb{V}_\lambda^\kappa$, and the canonical equivalence $\g\textup{-mod}^{\textup{f.d.}} \simeq \hat{\g}_{\kappa}\textup{-mod}^{G(\O)}$ is induced by $V_\lambda \mapsto \mathbb{V}_\lambda^\kappa$.

On the other hand, the quantum group $U_q(\g)$ associated to $\g$ is known to have almost identical representation theory as $\g$ when the quantum parameter is not a root of unity. The formula will connect the aforementioned semi-infinite cohomology functor with the quantum group cohomology complex $\textup{C}^\bullet(U_q(\n),\mathcal{M})$ of $U_q(\g)$-module $\mathcal{M}$.

The $\hat{\g}_{\kappa}$-modules when $\kappa$ is irrational and the $U_q(\g)$-modules when $q$ is not a root of unity are related by the \textit{Kazhdan-Lusztig equivalence} at irrational level \cite{KazLus}. This is an equivalence of abelian categories
\begin{equation*}
\textup{KL}^{irr}_G: \hat{\g}_{\kappa}\textup{-mod}^{G(\O)} \overset{\sim}{\longrightarrow} U_q(\g)\textup{-mod}
\end{equation*}
which sends the simple object $\mathbb{V}_\lambda^\kappa$ to the simple $U_q(\g)$-module $\VV_\lambda$ of highest weight $\lambda$.

The formula at irrational level is the isomorphism of complexes of vector spaces
\begin{equation} \label{intro:irr_formula}
\semiinf(\n(\K), M)^\mu \cong \textup{C}^\bullet(U_q(\n), \textup{KL}^{irr}_G(M))^\mu
\end{equation}
for any weight $\mu$. An algebraic proof of \eqref{intro:irr_formula} is given in Section \ref{sec:comm_diag}.

\subsection{Rational level}
Situations are drastically more complicated at rational levels. First, the abelian category $\hat{\g}_{\kappa}\textup{-mod}^{G(\O)}$ is no longer semi-simple when $\kappa$ is rational. Second, the theory starts to bifurcate into the positive level case and the negative level case, and the Kazhdan-Lusztig equivalence only covers the negative one.

We deal with the negative rational level case first. We have the Kazhdan-Lusztig equivalence at negative level, which relates the negative level $\kappa'$ and the quantum parameter $q$ by
$$q = \textup{exp}(\frac{\pi \sqrt{-1}}{\kappa' - \kappa_{\textup{crit}}}).$$
Clearly $q$ is now a root of unity. To add further complication, there are more than one variants of quantum groups at a root of unity: the Lusztig form $U_q^{\textup{Lus}}$, the Kac-De Concini form $U_q^{\textup{KD}}$, and the small quantum group $\u_q$. They fit into the following sequence
\begin{equation} \label{intro:quantum_seq}
U_q^{\textup{KD}} \twoheadrightarrow \u_q \hookrightarrow U_q^{\textup{Lus}}.
\end{equation}
The Kazhdan-Lusztig equivalence in this case is $\textup{KL}^{\kappa'}_G: \hat{\g}_{\kappa'}\textup{-mod}^{G(\O)} \overset{\sim}{\longrightarrow} U^{\textup{Lus}}_q(\g)\textup{-mod}$, which sends Weyl modules to the so-called \textit{quantum Weyl modules}.
The (conjectural) formula at negative rational level is
\begin{equation} \label{intro:neg_formula}
\semiinf(\n(\K), M)^\mu \cong \textup{C}^\bullet(U^{\textup{KD}}_q(\n), \textup{KL}^{\kappa'}_G(M))^\mu
\end{equation} for $M$ in $\hat{\g}_{\kappa'}\textup{-mod}^{G(\O)}$.

Now we consider the positive rational level case. There is a duality between negative and positive level modules, due to Gaitsgory and Arkhipov \cite{ArkhGai}. We denote the duality functor by
$$\mathbb{D}_{G(\O)}:\hat{\g}_{\kappa'}\textup{-mod}^{G(\O)} \overset{\sim}{\longrightarrow} \hat{\g}_{\kappa}\textup{-mod}^{G(\O)},$$
where $\kappa$ is positive (and so $\kappa'$ is negative). A Kazhdan-Lusztig type functor at positive level can be defined using the duality as
\begin{equation*}
\textup{KL}^{\kappa}_G := \mathbb{D}^q \circ \textup{KL}^{\kappa'}_G \circ \mathbb{D}_{G(\O)}^{-1},
\end{equation*}
where $\mathbb{D}^q: U_q^{\textup{Lus}}(\g)\textup{-mod} \overset{\sim}{\longrightarrow} U_q^{\textup{Lus}}(\g)\textup{-mod}$ is the contragredient duality for modules over quantum groups.

The crucial difference here is that the functor $\textup{KL}^{\kappa}_G$ at positive level only makes sense in the derived world. This is due to the fact that the duality functor $\mathbb{D}_{G(\O)}$ is only defined on derived categories and does not preserve the heart of the $t$-structures. However, the functor $\textup{KL}^{\kappa}_G$ does send Weyl modules to quantum Weyl modules.

The formula at positive rational level (Theorem \ref{*thm}) is
\begin{equation} \label{intro:pos_formula}
\semiinf(\n(\K), M)^\mu \cong \textup{C}^\bullet(U^{\textup{Lus}}_q(\n), \textup{KL}^{\kappa}_G(M))^\mu
\end{equation} for $M$ in $\hat{\g}_{\kappa}\textup{-mod}^{G(\O)}$.
Note that the duality involved in the definition of $\textup{KL}^{\kappa}_G$ has the effect of swapping the quantum groups in the sequence \eqref{intro:quantum_seq}. This is just another incarnation of the fact that the Verdier duality swaps the standard (!-) and costandard (*-) objects, while preserving the intermediate (!*-) objects. Indeed, we can realize the positive and negative level categories geometrically as D-modules on the affine flag variety by the Kashiwara-Tanisaki localization, and the functor $\mathbb{D}_{G(\O)}$ corresponds to the Verdier dual.

The main result of this paper is a proof of the positive level formula \eqref{intro:pos_formula}, whereas the negative level formula \eqref{intro:neg_formula} is the subject of \cite{quantumsemiinf}, and is still a conjecture with partial results obtained. We now explain the idea of proof at positive level, which follows the same pattern as in the work \textit{loc. cit.} by Gaitsgory.

The quantum Frobenius gives rise to a short exact sequence of categories
\begin{equation} \label{intro:q_Frob}
0 \to \textup{Rep}(\check{B}) \to U^{\textup{Lus}}_q(\b)\textup{-mod} \to \u_q(\b)\textup{-mod} \to 0.
\end{equation}
The strategy is to first characterize the cohomology functor on the Kac-Moody side that corresponds to $\textup{C}^\bullet(\u_q(\n),-)^\mu$, and then pass to $\textup{C}^\bullet(U^{\textup{Lus}}_q(\n),-)^\mu$ using the sequence \eqref{intro:q_Frob}. For this purpose we construct the !*-generalized semi-infinite cohomology functor $\semiinf_{!*}(\n(\K),-)$, which is made possible by the recent discovery \cite{semiinf} of a non-standard $t$-structure on the category $\textup{D-mod}(\Gr_G)^{N(\K)}$ of $N(\K)$-equivariant D-modules on the affine Grassmannian $\Gr_G$, and along with the discovery the construction of a semi-infinite intersection cohomology (IC) object $\textup{IC}^{\frac{\infty}{2}}$ in $\textup{D-mod}(\Gr_G)^{N(\K)}$.

We prove (Theorem \ref{!*thm}):
\begin{equation} \label{intro:!*_formula}
\semiinf_{!*}(\n(\K), M)^\mu \cong \textup{C}^\bullet(\u_q(\n), \textup{KL}^{\kappa}_G(M))^\mu.
\end{equation}
Identifying the coweight lattice as a sublattice of the weight lattice (depending on the parameter $\kappa$), we
consider all $\check{\nu}$-components $\semiinf_{!*}(\n(\K), M)^{\check{\nu}}$ at the same time; i.e. we take the direct sum over all coweights $\check{\nu}$. The resulting object acquires a $\check{B}$-action, and its $\check{B}$-invariants is precisely $\semiinf(\n(\K), M)^0$ by the theory of Arkhipov-Bezrukavnikov-Ginzburg \cite{ABG}. On the quantum group side this procedure produces $\textup{C}^\bullet(U^{\textup{Lus}}_q(\n), \textup{KL}^{\kappa}_G(M))^0$ by the sequence \eqref{intro:q_Frob}. Thus we have established
$$\semiinf(\n(\K), M)^0 \cong \textup{C}^\bullet(U^{\textup{Lus}}_q(\n), \textup{KL}^{\kappa}_G(M))^0,$$
and for general $\mu$ the same procedure applies to the identity with a $\mu$-shift.

\subsection{Factorization} \label{sec:intro_fact}
The category $\hat{\g}_{\kappa}\textup{-mod}^{G(\O)}$ has a non-trivial braided monoidal structure constructed by Kazhdan and Lusztig via the Knizhnik-Zamolodchikov equations \cite{KazLus}. The most remarkable part of the Kazhdan-Lusztig equivalence is that it is an equivalence respecting the braided monoidal structures, where the braided monoidal structure on the quantum group side is given by the R-matrix. 

As early as in the works of Felder-Wieczerkowski \cite{felder}, Schechtman \cite{Schecht} and Schechtman-Varchenko \cite{SV1, SV2}, mathematicians realized that the R-matrix of a quantum group is related to topological factorizable objects on certain inductive limit of configuration spaces. The works in this direction culminated in \cite{BFS} where Bezrukavnikov-Finkelberg-Schechtman established a topological realization of the category of modules over the small quantum group in terms of factorizable sheaves.

On the other hand, Khoroshkin-Schechtman \cite{KS1, KS2} constructed the algebro-geometric factorizable objects which they call the factorizable D-modules. Their construction gives an algebro-geometric realization of the category $\hat{\g}_{\kappa}\textup{-mod}^{G(\O)}$ for $\kappa$ irrational (more precisely, Drinfeld's tensor category of $\g$-modules), and via the Riemann-Hilbert correspondence it corresponds to the BFS factorizable sheaves.

Therefore, after the respective realization as factorizable objects, the Kazhdan-Lusztig equivalence at irrational level is deduced from the Riemann-Hilbert correspondence, which clearly preserves the factorization structures retaining the braided monoidal structures in the original categories.

The general philosophy \cite[Section 1.8 and 1.9]{Ras} is that there should be a correspondence between \textit{factorization categories} and braided monoidal categories. It is hence expected that a factorization form of the Kazhdan-Lusztig equivalence (for not just the irrational levels) exists.

We digress temporarily to discuss the notion of strong group actions on categories. We say a category $\mathsf{C}$ is acted on strongly by a group $H$ if $\mathsf{C}$ is a module category of $\textup{D-mod}(H)$. We can twist the category $\textup{D-mod}(H)$ by a multiplicative $\mathbb{G}_m$-gerbe on $H$, which is equivalent to the data of a central extension $\hat{\mathfrak{h}}$ of the Lie algebra $\mathfrak{h}=\textup{Lie}(H)$, with a lift of the adjoint action of $H$ on $\mathfrak{h}$ to $\hat{\mathfrak{h}}$, c.f. \cite{NickGai}.

In the case of the loop group $G(\K)$ of a reductive group $G$, corresponding to the affine Kac-Moody extension $\hat{\g}_\kappa$ we have the $\kappa$-twisted category $\textup{D-mod}_\kappa(G(\K))$. A category is acted on strongly by $G(\K)$ at level $\kappa$ if it is a module category of $\textup{D-mod}_\kappa(G(\K))$.

From the algebro-geometric perspective, factorization structures arise naturally from (strong) actions of a loop group \cite{GaiFactCat}. The category $\hat{\g}_{\kappa}\textup{-mod}^{G(\O)}$ is acted on strongly by $G(\K)$ at level $\kappa$, which essentially comes from the action of the loop algebra $\g(\K)$ on $\hat{\g}$. Then we obtain the factorization category $(\hat{\g}_{\kappa}\textup{-mod}^{G(\O)})_{\textup{Ran}(X)}$. 

The main difficulty to achieve a factorization Kazhdan-Lusztig equivalence lies in the quantum group side. Since the braided monoidal structure for quantum group modules is of topological nature, the most natural factorization structure in this case should be of topological flavor. The theory of \textit{topological factorization categories} was developed by J. Lurie in terms of algebras over the little disks operad in the $(\infty,2)$-category of DG categories \cite{Lurie2}. Tautologically, the topological factorization category associated to the braided monoidal category $\textup{Rep}_q(T)$ of representations of the quantum torus is $\textup{Shv}_q(\Gr_{\check{T},\textup{Ran}(X)})$, the constructible sheaves on the Beilinson-Drinfeld Grassmannian of $\check{T}$, twisted by a factorizable gerbe specified by $q$. Now the question is, how to explicitly build the topological factorization category associated to $U^{\textup{Lus}}_q(\g)\textup{-mod}$?

Owing to Lurie's theory, the answer is positive, if we replace the quantum group $U^{\textup{Lus}}_q(\g)$ by the small quantum group $\u_q(\g)$, or by a mixed quantum group $U^{+\textup{Lus},-\textup{KD}}_q(\g)$ which has positive part in Lusztig form and negative part in Kac De Concini form. Nevertheless, we are still unable to construct explicitly the topological factorization category $\textup{Fact}(U^{\textup{Lus}}_q(\g)\textup{-mod})$ associated to $U^{\textup{Lus}}_q(\g)\textup{-mod}$.

We can, however, modify the factorization category associated to $U^{+\textup{Lus},-\textup{KD}}_q(\g)\textup{-mod}$ in algebraic terms to get a factorization category $\textup{Fact}(U^{\frac{1}{2}}_q(\g)\textup{-mod})$ that contains $\textup{Fact}(U^{\textup{Lus}}_q(\g)\textup{-mod})$ as a full subcategory. Even better, under the Riemann-Hilbert correspondence, the category $\textup{RH}(\textup{Fact}(U^{\frac{1}{2}}_q(\g)\textup{-mod}))$ is the natural recipient of a certain factorizable functor, called the \textit{Jacquet functor}, from $(\hat{\g}_{\kappa}\textup{-mod}^{G(\O)})_{\textup{Ran}(X)}$.

The factorization Kazhdan-Lusztig equivalence can now be formulated as
\begin{Conjecture} \label{intro:conj}
	The Jacquet functor is fully faithful, with its essential image identified with $$\textup{Fact}(U^{\textup{Lus}}_q(\g)\textup{-mod})$$ under the Riemann-Hilbert correspondence.
\end{Conjecture}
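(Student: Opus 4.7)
The plan is to deduce Conjecture \ref{intro:conj} by upgrading the main theorem of this paper, the semi-infinite/quantum group cohomology formula \eqref{intro:pos_formula}, to a factorization statement over $\textup{Ran}(X)$, and then to use this factorizable formula to control the Jacquet functor both for full faithfulness and for the identification of its essential image.

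First I would promote both sides of \eqref{intro:pos_formula} to functors between factorization categories. The source $(\hat{\g}_{\kappa}\textup{-mod}^{G(\O)})_{\textup{Ran}(X)}$ already carries a factorization structure coming from the strong $G(\K)$-action at level $\kappa$. On the target, the intermediate category $\textup{Fact}(U^{\frac{1}{2}}_q(\g)\textup{-mod})$ is built from a mix of Lurie's topological factorization machinery and algebraic input, and it receives the Jacquet functor. I would check that the semi-infinite cohomology functor $\semiinf(\n(\K),-)^\mu$ and its !*-variant $\semiinf_{!*}(\n(\K),-)^\mu$ are both factorizable over $\textup{Ran}(X)$, by exhibiting their defining $G(\K)$-equivariant complexes (respectively the semi-infinite IC sheaf $\textup{IC}^{\frac{\infty}{2}}$) as factorization objects. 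This uses the Beilinson-Drinfeld Grassmannian as the natural home for factorizable avatars of $\textup{D-mod}(\Gr_G)^{N(\K)}$.

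Next I would use the formula to pin down the Jacquet functor. Via the Riemann-Hilbert correspondence, the Jacquet functor should correspond, fiberwise over $\textup{Ran}(X)$, to a semi-infinite cohomology functor followed by the $\check{T}$-decomposition, so Theorem \ref{*thm} and its !*-companion \eqref{intro:!*_formula} translate into the statement that the Jacquet functor computes $\textup{C}^\bullet(U_q^{\textup{Lus}}(\n),\textup{KL}^{\kappa}_G(-))$ in families. Full faithfulness then reduces to checking that the unit of the adjunction with Arkhipov-Gaitsgory duality is an isomorphism on a generating factorizable family of Weyl modules $\mathbb{V}_\lambda^\kappa$; here the pointwise formula \eqref{intro:pos_formula} combined with the fact that $\textup{KL}^{\kappa}_G$ sends Weyl modules to quantum Weyl modules does the job. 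For the essential image, one combines the pointwise characterization with the quantum Frobenius sequence \eqref{intro:q_Frob}: the step from $U^{\frac{1}{2}}_q$ to $U^{\textup{Lus}}_q$ corresponds, on the Kac-Moody side, to cutting out the $G(\O)$-integrable part by a Hecke/Rep$(\check{B})$-type condition, which is exactly what the factorizable ABG picture of \cite{ABG} encodes via the $\check{B}$-action on $\bigoplus_{\check{\nu}} \semiinf_{!*}(\n(\K),M)^{\check{\nu}}$.

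The hard part will be step one, rigorously setting up the factorization structures and the compatibilities. At present $\textup{Fact}(U^{\textup{Lus}}_q(\g)\textup{-mod})$ is defined only indirectly as a conjectural full subcategory of $\textup{Fact}(U^{\frac{1}{2}}_q(\g)\textup{-mod})$, so one must show that the image of the Jacquet functor lands in a subcategory with the expected algebraic description. The key technical input, which replaces the pointwise application of \cite{ABG} by a factorizable version, is a construction of $\textup{IC}^{\frac{\infty}{2}}$ over the Ran Grassmannian, together with a proof that its $\semiinf$-based formula from \eqref{intro:!*_formula} is compatible with the chiral/factorization tensor product. Once this factorizable !*-formula is established, passage to the Lusztig form via \eqref{intro:q_Frob} and then to the full semi-infinite formula \eqref{intro:pos_formula} should be essentially formal, mirroring the argument sketched after Theorem \ref{!*thm} in the main body.
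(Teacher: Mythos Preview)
The statement you are attempting to prove is labeled \emph{Conjecture} in the paper, and the paper does not contain a proof of it. The author is explicit about this: immediately after stating the conjecture, the text says only that ``Our formula \eqref{intro:pos_formula} therefore plays an instrumental role in tackling Conjecture~\ref{intro:conj},'' and nowhere in the body is a proof (or even a proof sketch) of the conjecture given. So there is no ``paper's own proof'' against which to compare your proposal.

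Your proposal is, accordingly, not a proof but a strategy outline, and you say as much: you write that the factorization structures and compatibilities in step one are ``the hard part,'' that $\textup{Fact}(U^{\textup{Lus}}_q(\g)\textup{-mod})$ is ``defined only indirectly as a conjectural full subcategory,'' and that a factorizable construction of $\textup{IC}^{\frac{\infty}{2}}$ over the Ran Grassmannian together with compatibility of the !*-formula with the chiral tensor product is still needed. These are precisely the obstructions the paper leaves open. In particular, even granting the pointwise formulas \eqref{intro:pos_formula} and \eqref{intro:!*_formula}, your reduction of full faithfulness to checking a unit map on Weyl modules presupposes that the Jacquet functor has an adjoint with the right properties at the factorizable level, and your identification of the essential image presupposes an independent algebraic characterization of $\textup{Fact}(U^{\textup{Lus}}_q(\g)\textup{-mod})$ inside $\textup{Fact}(U^{\frac{1}{2}}_q(\g)\textup{-mod})$; neither is supplied here or in the paper. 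As a roadmap your outline is reasonable and consonant with the paper's philosophy, but it does not constitute a proof, and the paper does not claim one either.
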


The upshot is that, in the positive level case, the Jacquet functor is precisely the semi-infinite cohomology functor. Our formula \eqref{intro:pos_formula} therefore plays an instrumental role in tackling Conjecture \ref{intro:conj}. Moreover, as the definition of the semi-infinite cohomology is purely algebraic, one sees in this characterization that the transcendental nature of the Kazhdan-Lusztig equivalence exactly comes from that of the Riemann-Hilbert correspondence.

The categories $\u_q(\g)\textup{-mod}$, $U^{+\textup{Lus},-\textup{KD}}_q(\g)\textup{-mod}$ and $U^{\frac{1}{2}}_q(\g)\textup{-mod}$ will be defined concretely in Section \ref{sec:quantum_groups} for completeness.

We remark that, at negative level, the definition of the Jacquet functor involves the !-generalized semi-infinite cohomology functor, which is introduced and studied in \cite{quantumsemiinf}.

\subsection{Structure}
The paper is organized as follows.

In Section \ref{chapter:alg_constr} and Section \ref{chapter:geo_constr} we recall standard constructions and results from Lie theory and geometric representation theory. In particular, in Section \ref{sec:affine_lie_algebra} we define the duality functor between negative level and positive level modules, and calculate the image of affine Verma modules and Weyl modules. In Section \ref{sec:KL_positive} we define the positive level Kazhdan-Lusztig functor by means of the duality functor.

Section \ref{chapter:wakimoto} introduces the Wakimoto modules. In particular, we prove Theorem \ref{th:WakiVer}, which identifies the type $w_0$ Wakimoto module with the dual affine Verma module of the same highest weight under certain conditions on the highest weight and the level. In Section \ref{sec:rel_to_semiinf} we present two formulas which compute semi-infinite cohomology using Wakimoto modules.

Section \ref{chapter:semiinf_pos} is the main thrust of the paper.
In Section \ref{sec:gen_semiinf_functor} we introduce the generalized semi-infinite cohomology functor at positive level, and define the semi-infinite IC object $\textup{IC}^{\frac{\infty}{2},-}$ used in defining the !*-generalized functor. 
Section \ref{sec:formulas} states our main results, the formula for the !*-functor (Theorem \ref{!*thm}) and the formula for the original semi-infinite functor (Theorem \ref{*thm}).

Section \ref{chapter:irr_lvl} gives an algebraic proof of the main formula when the level is assumed irrational.

\subsection{Acknowledgements}
I would like to thank my advisors Joel Kamnitzer and Alexander Braverman, for countless stimulating discussions during which they shared with me their broad mathematical knowledge and insights.

I am hugely indebted to Dennis Gaitsgory, who formulated and introduced to me the problem which this paper addresses, and patiently taught me all the key concepts required to carry out this work.

I also benefited from conversations with Dylan Buston, Justin Campbell, Dinakar Muthiah, Sam Raskin, Nick Rozenblyum, Alex Weekes, Philsang Yoo, Yifei Zhao, and Xinwen Zhu.

\section{Preparation: algebraic constructions} \label{chapter:alg_constr}
\subsection{Root datum}
Recall notations from Section \ref{basic_setup} for algebraic groups and their Lie algebras. 
Let $\Lambda$ (resp. $\check{\Lambda}$) be the weight (resp. coweight) lattice of $G$. Then by definition $\check{\Lambda}$ (resp. $\Lambda$) is the weight (resp. coweight) lattice of $\check{G}$. Write $\Lambda^+$ (resp. $\check{\Lambda}^+$) for the set of dominant weights (resp. coweights).
Let $R$, $R^+$, and $\Pi$ denote the set of roots, positive roots, and simple roots of $G$, respectively. Let $\rho$ denote the half sum of all positive roots.

We have the standard invariant bilinear form on $\g$
$$(\cdot,\cdot)_{\textup{st}}: \g \otimes \g \to \C,$$ which restricts to a form on $\t$ and thus on the lattice $\check{\Lambda} \subset \t$.
We also have the natural pairing $\langle \cdot, \cdot\rangle$ between $\t^*$ and $\t$, which restricts to
$$\langle \cdot, \cdot\rangle : \Lambda \otimes \check{\Lambda} \to \Z$$ on the lattices. For each simple root $\alpha_i$ and coroot $\check{\alpha}_i$, let $d_i \in \lbrace 1,2,3 \rbrace$ be the integer such that $(\check{\alpha}_i, \check{\mu})_{\textup{st}} = d_i\langle \alpha_i, \check{\mu}\rangle$. Then there is an induced form on $\Lambda$, also denoted by $(\cdot,\cdot)_{\textup{st}}$ when no confusion can arise, characterized by the relations $(\mu, \alpha_i)_{\textup{st}} = d_i^{-1}\langle\mu , \check{\alpha}_i\rangle$ for all $i$.

For a number $\kappa \in \C^{\times}$, we set $$(\cdot,\cdot)_{\kappa}:= \kappa (\cdot,\cdot)_{\textup{st}}: \g \otimes \g \to \C.$$
We then have the corresponding form $(\cdot,\cdot)_{\kappa}$ on $\Lambda$ which satisfies
$$\frac{(\check{\alpha}_i,\check{\alpha}_j)_{\kappa}}{(\check{\alpha}_i,\check{\alpha}_i)_{\kappa}} \cdot d_i = d_j \cdot \frac{(\alpha_i,\alpha_j)_{\kappa}}{(\alpha_i,\alpha_i)_{\kappa}}.$$

Define the isomorphism $\phi_{\kappa}: \t \to \t^*$ by the relation $(\lambda, \phi_{\kappa}(\check{\mu}))_{\kappa - \kappa_{\textup{crit}}} = \langle \lambda, \check{\mu}\rangle$. 
We will abuse the notation by writing $\check{\mu}$ in place of $\phi_{\kappa}(\check{\mu})$ when both weights and coweights are present in an expression. For example, we will write $\lambda + \check{\mu}$ instead of $\lambda+\phi_{\kappa}(\check{\mu})$.
When $\kappa$ is a rational number such that $\phi_{\kappa}: 
\check{\Lambda} \to \Lambda$, let $\phi_T: T \to \check{T}$ be the map induced from $\phi_{\kappa}$.

\subsection{Affine Lie algebras} \label{sec:affine_lie_algebra}
The affine Lie algebra $\hat{\g}_\kappa$ at level $\kappa$ is a central extension of $\g (\K)$ by the 1-dimensional trivial module $\C \textbf{1}$, with Lie bracket defined by the 2-cocycle $$f, g \mapsto \textrm{Res}_{t=0} (f\,,\frac{dg}{dt} )_{\kappa}, \quad f, g \in \g(\K).$$
We have the Cartan decomposition of the affine Lie algebra $\hat{\g}_\kappa$:
$$\hat{\g}_\kappa = \n (\K) \oplus \hat{\t}_{\kappa} \oplus \n^{-}(\K),$$
where the subalgebra $\hat{\t}_{\kappa}:= \t(\K) \oplus \C \textbf{1}$ is the Heisenberg algebra at level $\kappa$.

We define $\hat{\g}_{\kappa}\textup{-mod}^
\heartsuit$ as the abelian category whose objects are $\hat{\g}_{\kappa}$-modules $M$ where
(1) the central element $\textbf{1}$ acts as the identity, and
(2) each vector $m \in M$ is annhilated by $\g(t^n\C[[t]])$ for some $n \geq 0$. The morphisms are ordinary $\hat{\g}_{\kappa}$-equivaraint maps. The corresponding DG category is denoted by $\hat{\g}_{\kappa}\textup{-mod}$.

The full subcategory $(\hat{\g}_{\kappa}\textup{-mod}^{G(\O)})^
\heartsuit \subset \hat{\g}_{\kappa}\textup{-mod}^
\heartsuit$ consists of those modules whose $\g(\O)$-action comes from a $G(\O)$-action. As explained in \cite[Section 1.2]{GaiKM}, the $G(\O)$-action integrating a given $\g(\O)$-action is unique at the abelian level, but not so at the derived level since higher cohomologies of $G(\O)$ are non-trivial. Consequently, the DG category $\hat{\g}_{\kappa}\textup{-mod}^{G(\O)}$ of $G(\O)$-equivariant (equivalently, $G(\O)$-integrable) $\hat{\g}_{\kappa}$-modules is no longer a full subcategory of the DG category $\hat{\g}_{\kappa}\textup{-mod}$. Nevertheless, one can construct the DG category $\hat{\g}_{\kappa}\textup{-mod}^{G(\O)}$ by ``bootstrapping" from the abelian category $(\hat{\g}_{\kappa}\textup{-mod}^{G(\O)})^
\heartsuit$. The details of this construction appear in Section 2 of \textit{loc. cit.} A different approach to construct $\hat{\g}_{\kappa}\textup{-mod}^{G(\O)}$, using derived algebraic geometry and higher category theory, is given in Section 4 of \textit{loc. cit.}, which might be of interest to the reader.

For a given finite-dimensional representation $V$ of $\g$, we extend it to a module over $\g (\O) \oplus \C \textbf{1}$ by setting the action of $t$ as 0 and the action of the central element $\textbf{1}$ as 1. Then we perform induction to $\hat{\g}_{\kappa}$: $$V^{\kappa} := \Ind^{\hat{\g}_{\kappa}}_{\g (\O) \oplus \C \textbf{1}}V.$$
The resulting $\hat{\g}_{\kappa}$-module $V^{\kappa}$ is obviously $G(\O)$-integrable.

Let $V_{\lambda}$ be the finite-dimensional irreducible representation of $\g$ with dominant integral highest weight $\lambda$. The corresponding $\hat{\g}_\kappa$-module $\mathbb{V}^{\kappa}_{\lambda}:= (V_{\lambda})^{\kappa}$ is called the Weyl module of highest weight $\lambda$. The category $\hat{\g}_{\kappa}\textup{-mod}^{G(\O)}$ is compactly generated by the subcategory of Weyl modules.

Recall the Verma module $M_{\lambda} := \Ind^{\g}_{\b}\C_{\lambda}$ of highest weight $\lambda$ over $\g$. Then we define the affine Verma module as $\mathbb{M}^{\kappa}_{\lambda}:=(M_{\lambda})^{\kappa}$. Equivalently, $\mathbb{M}^{\kappa}_{\lambda} = \Ind^{\hat{\g}_{\kappa}}_{\textup{Lie}(I)}\C_{\lambda}$, where we regard $\C_{\lambda}$ as an $\textup{Lie}(I)$-module via the evalution map. By definition affine Weyl modules are objects in (the heart of) the DG category $\hat{\g}_{\kappa}\textup{-mod}^I$ of $I$-equivariant $\hat{\g}_{\kappa}$-modules.

Let $\C_{\mu}$ be the 1-dimensional $\t$-module of weight $\mu$. Then similar to the definition of $V^{\kappa}$ above we define the Heisenberg module
$$\pi^{\kappa}_{\mu} := \Ind^{\hat{\t}_{\kappa}}_{\t (\O) \oplus \C \textbf{1}} \C_{\mu},
$$
called the Fock module of highest weight $\mu$.

We recall from \cite{ArkhGai} a perfect pairing between negative level and positive level $\hat{\g}$-modules:
$$\langle -,- \rangle : \hat{\g}_{\kappa'} \textup{-mod} \times \hat{\g}_{\kappa} \textup{-mod} \to \textup{Vect}$$
given by $\langle N,M \rangle = \semiinf(\g(\K), N \otimes_{\C} M)$. Here the semi-infinite complex is taken with respect to the lattice $\g(\O) \subset \g(\K)$.
Suppose that a group $K$ acts on $\hat{\g} \textup{-mod}$. Then \begin{equation} \label{eq:equiv_pairing}
\langle -,- \rangle_K: \hat{\g}_{\kappa'}\textup{-mod}^K \times \hat{\g}_{\kappa}\textup{-mod}^K \overset{\langle -,- \rangle}{\longrightarrow} \textup{Vect}^K \overset{H_K^{\bullet}}{\longrightarrow} \textup{Vect}
\end{equation} 
defines a pairing between negative and positive level $K$-equivariant categories.
Let $\mathbb{D}_K: \hat{\g}_{\kappa'} \textup{-mod}^K \to \hat{\g}_{\kappa} \textup{-mod}^K$ be the (contravariant) duality functor which satisfies
\begin{equation} \label{pairing}
\langle A,B \rangle_K = \Hom_{\hat{\g}_{\kappa}} (\mathbb{D}_K A, B).
\end{equation}

Let $H$ be a subgroup of $K$. The pairing for equivariant categories naturally commutes with the forgetful functor $\textrm{obliv}: \hat{\g} \textrm{-mod}^K \to \hat{\g} \textrm{-mod}^H$; i.e. the following diagram commutes:
\begin{equation*}
\xymatrix @C=6pc{
	\hat{\g}_{\kappa'}\textup{-mod}^K \ar[d]_{\textup{obliv}} \ar[r]^{\mathbb{D}_K} & \hat{\g}_{\kappa}\textup{-mod}^K \ar[d]^{\textup{obliv}} \\
	\hat{\g}_{\kappa'}\textup{-mod}^H \ar[r]^{\mathbb{D}_H} & \hat{\g}_{\kappa}\textup{-mod}^H
}
\end{equation*}

Recall that the *-averaging functor $\textrm{Av}_*^{K/H}$ (resp. !-averaging functor $\textrm{Av}_!^{K/H}$) is defined as the right (resp. left) adjoint to the forgetful functor $\textup{obliv}: \hat{\g} \textup{-mod}^K \to \hat{\g} \textup{-mod}^H$. Then it follows from the definitions that \begin{equation} \label{duality_average}
\mathbb{D}_K \circ \textrm{Av}_*^{K/H} \simeq \textrm{Av}_!^{K/H} \circ \mathbb{D}_H.
\end{equation}

We end this section by two simple but important calculations of the duality functor.

\begin{Lemma} \label{D_Verma}
	For any weight $\mu$, $\mathbb{D}_I(\mathbb{M}^{\kappa'}_{\mu}) \cong \mathbb{M}^{\kappa}_{-\mu + 2\rho}[\dim(G/B)]$.
\end{Lemma}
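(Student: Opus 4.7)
The plan is to verify the isomorphism by Yoneda, computing the pairing $\langle \mathbb{M}^{\kappa'}_\mu, B\rangle_I$ for an arbitrary $B \in \hat{\g}_\kappa\textup{-mod}^I$ and matching it with the functor represented by $\mathbb{M}^\kappa_{-\mu+2\rho}[\dim(G/B)]$. By Frobenius reciprocity applied to $\mathbb{M}^\kappa_{-\mu+2\rho} = \Ind^{\hat{\g}_\kappa}_{\textup{Lie}(I) + \C\mathbf{1}}\C_{-\mu+2\rho}$, the target functor sends $B$ to the $(-\mu+2\rho)$-weight part of the derived $\textup{Lie}(I^0)$-invariants of $B$, shifted down by $\dim(G/B)$.

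To compute the left-hand side, I would first exploit the induced structure of $\mathbb{M}^{\kappa'}_\mu$ to produce a canonical isomorphism of diagonal $\hat{\g}_{2\kappa_{\textup{crit}}}$-modules
\begin{equation*}
\mathbb{M}^{\kappa'}_\mu \otimes B \;\cong\; \Ind^{\hat{\g}_{2\kappa_{\textup{crit}}}}_{\textup{Lie}(I)+\C\mathbf{1}}(\C_\mu \otimes B),
\end{equation*}
where on the inducing object $\n + t\g[[t]]$ acts only through $B$ and $\t$ acts diagonally. This identification follows from the PBW decomposition of $\hat{\g}_{2\kappa_{\textup{crit}}}$ relative to $\textup{Lie}(I)$ together with the definition of the affine Verma module, once one checks that the diagonal central charge on the tensor product matches $2\kappa_{\textup{crit}} = \kappa + \kappa'$.

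Next, I would change the reference lattice of the semi-infinite Clifford algebra from $\g(\O)$ to the smaller lattice $\textup{Lie}(I) = \b + t\g[[t]]$. Since $\g(\O)/\textup{Lie}(I) \cong \n^-$ is finite-dimensional of dimension $\dim(G/B)$ with top exterior power $\C_{-2\rho}$ as a $\t$-module, this change of polarization introduces a cohomological shift by $[-\dim(G/B)]$ together with a $\t$-weight twist by $-2\rho$. With the $\textup{Lie}(I)$-adapted polarization, the semi-infinite cohomology of a module induced from $\textup{Lie}(I)$ collapses to the inducing representation, giving
\begin{equation*}
\semiinf(\g(\K), \mathbb{M}^{\kappa'}_\mu \otimes B) \;\cong\; (\C_{\mu-2\rho} \otimes B)[-\dim(G/B)].
\end{equation*}

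Finally, applying $H^\bullet_I$ to the diagonal $I$-module $\C_{\mu-2\rho} \otimes B$ extracts the $(2\rho - \mu)$-weight part of the derived $\textup{Lie}(I^0)$-invariants of $B$, which by Frobenius reciprocity for $\mathbb{M}^\kappa_{-\mu+2\rho}$ identifies with $\Hom_{\hat{\g}_\kappa}(\mathbb{M}^\kappa_{-\mu+2\rho}, B)$. Combined with the previous shift, this yields $\langle \mathbb{M}^{\kappa'}_\mu, B\rangle_I \cong \Hom_{\hat{\g}_\kappa}(\mathbb{M}^\kappa_{-\mu+2\rho}[\dim(G/B)], B)$, and the lemma follows by Yoneda. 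The main technical obstacle is the change-of-polarization step: one has to verify carefully that the shift and twist emerge with the correct signs, which reduces to tracking the semi-infinite vacuum through the finite-dimensional quotient $\g(\O)/\textup{Lie}(I) \cong \n^-$. The appearance of both $\dim(G/B)$ and $2\rho$ in the statement of the lemma exactly reflects this finite-dimensional discrepancy.
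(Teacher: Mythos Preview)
Your proposal is correct and follows essentially the same route as the paper's Proof~2: unwind $\langle \mathbb{M}^{\kappa'}_\mu, B\rangle_I$ via the induced structure of the Verma module, account for the determinant line $\textup{det}(\g(\O)/\textup{Lie}(I))^* \cong \C_{2\rho}[\dim(G/B)]$, and conclude by Frobenius reciprocity. The paper also offers a shorter Proof~1 that cites the general formula $\mathbb{D}_I(\Ind^{\hat{\g}_{\kappa'}}_{I}\C_{\nu}) \cong \Ind^{\hat{\g}_{\kappa}}_{I}\C_{-\nu} \otimes \textup{det.\,rel.}(\textup{Lie}(I), \g(\O))$ from \cite{ArkhGai} directly, but your unpacked version with the explicit change of polarization is equivalent in content.
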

\begin{proof}[Proof 1]
	From \cite[Section 2.2.8]{ArkhGai}, we have $$\mathbb{D}_I(\Ind^{\hat{\g}_{\kappa'}}_{I}\C_{\nu}) \cong \Ind^{\hat{\g}_{\kappa}}_{I}\C_{-\nu} \otimes \textup{det. rel.}(\textup{Lie}(I), \g(\O)).$$ Note that $$\textup{det. rel.}(\textup{Lie}(I), \g(\O)) \cong \textup{det}(\g/\b)^*$$ is the graded 1-dimensional $B$-module of weight $2\rho$ concentrated at degree $-\dim(G/B)$.
\end{proof}
\begin{proof}[Proof 2]
	For $L \in \hat{\g}_{\kappa}\textup{-mod}^I$, we have 
	\begin{equation*}
	\begin{split}
	\langle \mathbb{M}^{\kappa'}_{\mu}, L\rangle_I \cong H^{\bullet}_I(\semiinf(\g(\K), (\Ind^{\hat{\g}_{\kappa'}}_I \C_{\mu}) \otimes L))
	\cong \Hom_{I\textup{-mod}}(\C_{-\mu}\otimes \textup{det}(\g(\O)/\textup{Lie}(I))^*, L)\\
	\cong \Hom_{I\textup{-mod}}(\C_{-\mu+2\rho}[\dim(G/B)], L)
	\cong \Hom_{\hat{\g}_{\kappa}\textup{-mod}^I}(\mathbb{M}^{\kappa}_{-\mu+2\rho}[\dim(G/B)], L).
	\end{split}
	\end{equation*}
	Then the assertion follows from \eqref{pairing}.	
\end{proof}
As this lemma reveals, the duality functor $\mathbb{D}$ (and its equivariant versions) does not preserve the usual $t$-structures. However, it does preserve the compact objects:

\begin{Lemma} \label{D_Weyl}
	For $\lambda \in \Lambda^+$, $\mathbb{D}_{G(\O)}(\mathbb{V}^{\kappa'}_{\lambda}) \cong \mathbb{V}^{\kappa}_{-w_0(\lambda)}$.
\end{Lemma}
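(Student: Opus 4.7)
The plan is to mimic Proof 2 of Lemma \ref{D_Verma}, using the characterization of $\mathbb{D}_{G(\O)}$ via the equivariant pairing \eqref{pairing}. Let $L \in \hat{\g}_{\kappa}\textup{-mod}^{G(\O)}$ be an arbitrary test object. I will compute
$$\langle \mathbb{V}^{\kappa'}_{\lambda}, L\rangle_{G(\O)} = H^{\bullet}_{G(\O)}\bigl(\semiinf(\g(\K), \mathbb{V}^{\kappa'}_{\lambda}\otimes L)\bigr)$$
and match it with $\Hom_{\hat{\g}_{\kappa}\textup{-mod}^{G(\O)}}(\mathbb{V}^{\kappa}_{-w_0(\lambda)}, L)$.

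The key input is that $\mathbb{V}^{\kappa'}_{\lambda} = \Ind^{\hat{\g}_{\kappa'}}_{\g(\O)\oplus\C\textbf{1}} V_{\lambda}$ is induced from the subalgebra $\g(\O)\oplus \C\textbf{1}$, and the semi-infinite cohomology $\semiinf(\g(\K), -)$ is taken with respect to the lattice $\g(\O) \subset \g(\K)$. For a module induced from a subalgebra agreeing with the chosen lattice, the semi-infinite complex degenerates to ordinary $\g(\O)$-coinvariants tensored with the determinant relation $\textup{det. rel.}(\g(\O),\g(\O))$, which is trivial in this case. Concretely,
$$\semiinf\bigl(\g(\K), (\Ind^{\hat{\g}_{\kappa'}}_{\g(\O)\oplus\C\textbf{1}} V_{\lambda})\otimes L\bigr) \simeq \Hom_{\g(\O)}(V_{\lambda}^*, L),$$
with no homological shift. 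Passing to $G(\O)$-invariants on both sides (i.e. applying $H^{\bullet}_{G(\O)}$) upgrades this to $\Hom_{G(\O)\textup{-mod}}(V_{\lambda}^*, L)$. Using $V_{\lambda}^* \cong V_{-w_0(\lambda)}$ as $\g$-modules and Frobenius reciprocity for the induction defining Weyl modules, this rewrites as
$$\Hom_{G(\O)\textup{-mod}}(V_{-w_0(\lambda)}, L) \cong \Hom_{\hat{\g}_{\kappa}\textup{-mod}^{G(\O)}}(\mathbb{V}^{\kappa}_{-w_0(\lambda)}, L).$$
Combining with \eqref{pairing} and the Yoneda lemma yields $\mathbb{D}_{G(\O)}(\mathbb{V}^{\kappa'}_{\lambda}) \cong \mathbb{V}^{\kappa}_{-w_0(\lambda)}$.

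The main step requiring care is the semi-infinite computation: one has to verify that, unlike the Verma case where one induces from $\textup{Lie}(I)$ (smaller than the lattice $\g(\O)$) and hence picks up the nontrivial factor $\textup{det. rel.}(\textup{Lie}(I),\g(\O)) \cong \textup{det}(\g/\b)^*$ producing the $2\rho$-shift and the $[\dim(G/B)]$-shift in Lemma \ref{D_Verma}, in the Weyl case the induction subalgebra and the lattice coincide so no shift appears. Once this is in hand, the rest is a routine unwinding of adjunctions. This also explains conceptually why $\mathbb{D}_{G(\O)}$ preserves compactness (sending the generator $\mathbb{V}^{\kappa'}_{\lambda}$ to the generator $\mathbb{V}^{\kappa}_{-w_0(\lambda)}$) while failing to preserve the $t$-structure as observed for Vermas.
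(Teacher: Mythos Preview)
Your proposal is correct and is precisely the explicit working-out of what the paper means by ``Similar to the proofs of Lemma~\ref{D_Verma}'': you follow the template of Proof~2 there, with the simplification that inducing from $\g(\O)$ rather than $\textup{Lie}(I)$ makes $\textup{det.rel.}(\g(\O),\g(\O))$ trivial, and the paper's remark ``the lowest weight of $V_\lambda$ is $w_0(\lambda)$'' is exactly your step $V_\lambda^* \cong V_{-w_0(\lambda)}$. One minor point of phrasing: it is cleaner to pass directly from $H^\bullet_{G(\O)}(\semiinf(\g(\K),\ldots))$ to $\Hom_{G(\O)\textup{-mod}}(V_\lambda^*,L)$ in one step (as in Proof~2 of Lemma~\ref{D_Verma}) rather than first writing $\semiinf\simeq \Hom_{\g(\O)}$ and then ``upgrading,'' but the argument is sound.
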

\begin{proof}
	Similar to the proofs of Lemma \ref{D_Verma}. Note that the lowest weight of $V_{\lambda}$ is $w_0(\lambda)$.
\end{proof}

\subsection{Quantum groups}\label{sec:quantum_groups}
Let $\ell$ be a sufficiently large positive integer divisible by all $d_i$'s, and put $\ell_i := \ell /d_i$.
Recall the following variants of quantum groups associated to $\g$ and a primitive $\ell$-th root of unity $q$ (c.f. \cite{ArkhGai2} and \cite{Chari}):
\begin{itemize}
	\item The Drinfeld-Jimbo quantum group $\mathbf{U}_v$, generated by Chavelley generators $E_i$, $F_i$ and $K_t$ for $i=1, ..., \textup{rank}\g$ and $t \in T$ over $\C(v)$, the rational functions in $v$, subject to a list of relations.
	\item The (Lusztig's) big quantum group $U^{\textup{Lus}}_q(\g) \equiv U_q(\g)$: Take $R := \C[v, v^-1]_{(v-q)} \subset \C(v)$. The algebra $U_q(\g)$ is the specialization to $v=q$ of the $R$-subalgebra of $\mathbf{U}_v$ generated by $E_i$, $F_i$, $K_t$, and divided powers $E^{(\ell_i)}_i$, $F^{(\ell_i)}_i$.
	\item The Kac-De Concini quantum group $U_q^{\textup{KD}}(\g)$: the specialization to $v=q$ of the $R$-subalgebra of $\mathbf{U}_v$ generated by $E_i$, $F_i$, $K_t$, and $\dfrac{K_i - K^{-1}_i}{v - v^{-1}}$.
	\item The small quantum group $\u_q(\g)$, defined as the $\C$-subalgebra of $U_q(\g)$ generated by (i) $K_iE_i$, $F_i$, and $K_t$ for $t\in \Ker(\phi_T)$ when $\ell$ is even, or (ii) $E_i$, $F_i$, and $K_i$ when $\ell$ is odd.
\end{itemize}
Let $A$ be one of the above quantum groups. $A\textup{-mod}$ denotes the category of finite-dimensional modules over the Hopf algebra $A$.

We define the quantum Frobenius functor $$\textup{Fr}_q: \textup{Rep}(\check{G}) \to U^{\textup{Lus}}_q(\g)\textup{-mod}$$ as follows: given $V \in \textup{Rep}(\check{G})$, let $K_t$ act on $V$ according to $\phi_T: T \to \check{T}$, and let $E^{(\ell_i)}_i$, $F^{(\ell_i)}_i$ act as Chevalley generators $e_i$, $f_i$ of $U(\check{\g})$, whereas $E_i$ and $F_i$ act by 0.
Then the small quantum group $\u_q(\g)$ is the Hopf subalgebra of $U^{\textup{Lus}}_q(\g)$ universal with respect to the property that $\u_q(\g)$ acts trivially on $\textup{Fr}_q(V)$ for $V \in \textup{Rep}(\check{G})$. Namely, we have the following exact sequence of categories
\begin{equation}
0 \to \textup{Rep}(\check{G}) \to U^{\textup{Lus}}_q(\g)\textup{-mod} \to \u_q(\g)\textup{-mod} \to 0.
\end{equation}

We can also define quantum groups for $\b$, $\n$, $\t$ ...etc. In particular, we will consider the positive part of the various versions of quantum group defined above. In the remainder of this section we give an alternative construction of these quantum groups, where the quantum parameter $q$ and our level $\kappa$ are related in a more transparent way. Our main reference for this construction is \cite{Riche}.

Recall the root lattice $\Z \Pi$ of $G$. We start with a bilinear form $b: \Z \Pi \times \Z \Pi \to \C^{\times}$. Consider the braided monoidal category $\textup{Rep}_q(T_{\textup{ad}})$ of representations of the quantum adjoint torus: the objects are $\Z\Pi$-graded vector spaces, bifunctor the usual tensor product $\otimes$, and braiding operator induced by
\begin{equation} \label{braiding_rep_qT}
x \otimes y \mapsto b(\lambda, \mu)\, y \otimes x
\end{equation}
for $x \in \C_{\lambda}$ and $y \in \C_{\mu}$.
Consider the object $\oplus_{i \in \Pi} \C E_i$ in $\textup{Rep}_q(T_{\textup{ad}})$, where each $E_i$ is in degree $\alpha_i \in \Pi$. Let
$$U_q^{\textup{free}}(\n):= \textup{the graded free associative algebra generated by } \oplus_{i \in \Pi} \C E_i \textup{ in }\textup{Rep}_q(T_{\textup{ad}}).$$
Similarly, we define $U_q^{\textup{free}}(\n^-)$ generated by $\oplus_{i \in \Pi} \C F_i$ with each $F_i$ in degree $-\alpha_i$. Then set
$$U_q^{\textup{cofree}}(\n):= \underset{\lambda \in \Z_{\geq 0}\Pi}{\oplus}((U_q^{\textup{free}}(\n^-))_{-\lambda})^*.$$
We have the canonical bialgebra structure on $U_q^{\textup{free}}(\n)$, which induces the canonical bialgebra structure on $U_q^{\textup{cofree}}(\n)$. We have a canonical bialgebra map $\varphi: U_q^{\textup{free}}(\n) \to U_q^{\textup{cofree}}(\n)$ which sends $E_i$ to $\delta_{F_i}$.

For each $i \in \Pi$, choose $v_i := b(\alpha_i, \alpha_i)^{1/2}$. For arbitrary $i,j \in \Pi$, assume that $b(\alpha_i, \alpha_j) = b(\alpha_j, \alpha_i) = v_i^{\langle \alpha_i, \check{\alpha}_j \rangle}$. For integers $n, m$, define the quantum binomial coefficient
$${\begin{bmatrix}
	n \\
	m
	\end{bmatrix}}_i = \frac{[n]^!_i}{[m]^!_i[n-m]^!_i}
$$ where $[n]_i = \frac{v_i^n - v_i^{-n}}{v_i - v_i^{-1}}$ and $[n]^!_i = \Pi_{s=1}^{n} [s]_i$. The \textit{quantum Serre relation} corresponding to $i,j \in \Pi$ is the element
\begin{equation*}
\underset{p+p'= 1-\langle \alpha_i,\check{\alpha}_j \rangle}{\sum} (-1)^{p'} {\begin{bmatrix}
	1-\langle \alpha_i,\check{\alpha}_j \rangle \\
	p
	\end{bmatrix}}_i E_i^p E_j E_i^{p'}.
\end{equation*}

We define $U_q^{\textup{KD}}(\n)$ as the quotient of $U_q^{\textup{free}}(\n)$ by the quantum Serre relations for all $i,j \in \Pi$. It is verified in \cite{Riche} that the quantum Serre relations are sent to zero under $\varphi$. Hence $\varphi$ induces a map $\tilde{\varphi}: U_q^{\textup{KD}}(\n) \to U_q^{\textup{cofree}}(\n)$. Define $U_q^{\textup{Lus}}(\n) \subset U_q^{\textup{cofree}}(\n)$ as the sub-bialgebra linearly dual to $U_q^{\textup{KD}}(\n^-)$. It is shown in \textit{loc. cit.} that $\tilde{\varphi}$ factors through $U_q^{\textup{Lus}}(\n)$. Finally, the small quantum group $\u_q(\n)$ is defined as the image $\tilde{\varphi}(U_q^{\textup{KD}}(\n))$, regarded as a sub-bialgebra of $U_q^{\textup{Lus}}(\n)$. Therefore we have the following diagram
\begin{equation*}
\xymatrix @C=4pc{ U_q^{\textup{free}}(\n) \ar@{->>}[r] \ar[dd]_{\varphi} & U_q^{\textup{KD}}(\n) \ar[ddl]^{\tilde{\varphi}} \ar[dd]^{\tilde{\varphi}} \ar@{->>}[dr]\\
	& & \u_q(\n) \ar@{_{(}->}[dl]\\
	U_q^{\textup{cofree}}(\n) & U_q^{\textup{Lus}}(\n) \ar@{_{(}->}[l]	
}
\end{equation*}

Various versions of representation category of the full quantum group can now be obtained by applying the notion of relative Drinfeld center to the category of modules over $U_q^{\textup{Lus}}(\n)$. Suppose that $b$ is extended to a bilinear form on $\Lambda$, and recall $\textup{Rep}_q(T)$ the braided monoidal category of $\Lambda$-graded vector spaces, with the braiding operator given by the same formula as \eqref{braiding_rep_qT}. Let $U_q^{\textup{Lus}}(\n)$-mod be the category consisting of objects in $\textup{Rep}_q(T)$ together with action by $U_q^{\textup{Lus}}(\n)$ compatible with the grading.

Consider the category $U_q^{\textup{Lus}}(\b)$-mod whose objects are unions of finite-dimensional $U_q^{\textup{Lus}}(\n)$-submodules. We let $U_q^{\textup{Lus}}(\n)$ act on the usual tensor product of modules by
$$x \cdot (v \otimes v') := \sum b(\deg(v),\deg(x_{(2)})) \, (x_{(1)} \cdot v)\otimes(x_{(2)}\cdot v').
$$
Then both $U_q^{\textup{Lus}}(\n)$-mod and $U_q^{\textup{Lus}}(\b)$-mod are now braided monoidal categories.

We first define the category $U_q^{\textup{+Lus,--KD}}(\g)$-mod of representations over a ``mixed" quantum group, whose positive part is of Lusztig form and negative part is of Kac-De Concini form. Concretely,
the category $U_q^{\textup{+Lus,--KD}}(\g)$-mod is the relative Drinfeld center $\textup{Dr}_{\textup{Rep}_q(T)}(U_q^{\textup{Lus}}(\b)\textup{-mod})$, whose objects are pairs $(M,c)$, where $M$ is in $U_q^{\textup{Lus}}(\b)\textup{-mod}$ and 
$$c_{M,X}: M\otimes X \tilde{\longrightarrow} X \otimes M
$$ are isomorphisms functorial in $X$, such that $c_{M,X\otimes Y} = (\textup{Id}_X \otimes c_{M,Y}) \cdot (c_{M,X} \otimes \textup{Id}_Y)$ and $c_{M,\C_{\nu}}(m \otimes x) = b(\deg(m),\nu) \cdot x \otimes m$ for all $\nu \in \Z\Pi$.

The collection $c$ of functorial isomorphisms of an object $(M,c)$ in $U_q^{\textup{+Lus,--KD}}(\g)$-mod gives rise to a right coaction $\iota_M: M \to M \otimes U_q^{\textup{Lus}}(\n)$ of $U_q^{\textup{Lus}}(\n)$ on $M$, defined by
$$\iota_M (m) := (c_{M, U_q^{\textup{Lus}}(\n)})^{-1}(1\otimes m). 
$$ This induces a left $(U_q^{\textup{Lus}}(\n))^*$-action and hence a $U_q^{\textup{KD}}(\n^-)$-action, as we have seen that $U_q^{\textup{Lus}}(\n)$ and $U_q^{\textup{KD}}(\n^-)$ are dual to each other.

Now we define $U_q^{\frac{1}{2}}(\g)$-mod as the full subcategory of $U_q^{\textup{+Lus,--KD}}(\g)$-mod consisting of objects whose induced $U_q^{\textup{KD}}(\n^-)$-action factors through $U_q^{\textup{KD}}(\n^-) \twoheadrightarrow \u_q(\n^-)$. Finally, we can recover the category $U_q^{\textup{Lus}}(\g)$-mod of representations over the full (Lusztig's) quantum group as consisting of objects $(V, \vartheta)$ where $V$ is in $U_q^{\frac{1}{2}}(\g)$-mod and $\vartheta: U_q^{\textup{Lus}}(\n^-) \otimes V \to V$ an action extends the $\u_q(\n^-)$-action on $V$ along $\u_q(\n^-) \hookrightarrow U_q^{\textup{Lus}}(\n^-)$.

One can show that the forgetful functor $U_q^{\textup{Lus}}(\g)\textup{-mod} \to U_q^{\frac{1}{2}}(\g)\textup{-mod}$ is fully faithful. Therefore we have the following embeddings of categories:
\begin{equation} \label{seq_quantum_cats}
U_q^{\textup{Lus}}(\g)\textup{-mod} \hookrightarrow U_q^{\frac{1}{2}}(\g)\textup{-mod} \hookrightarrow U_q^{\textup{+Lus,--KD}}(\g)\textup{-mod}
\end{equation}

Although we define the above representation categories in terms of abelian categories, in practice we will consider the corresponding DG categories and the notations we use above will always mean DG categories. The abelian categories will be denoted by the heart of the respective DG categories. It is important to note that the embeddings in \eqref{seq_quantum_cats} still hold for DG categories.

\subsection{Kazhdan-Lusztig equivalence at negative level} \label{sec:KL_negative}
Let $\kappa'$ be a negative level and $q=\textup{exp}(\frac{\pi \sqrt{-1}}{\kappa' - \kappa_{\textup{crit}}})$. Given bilinear pairing $(\cdot,\cdot)_{\kappa'}$ on $\check{\Lambda}$, we define $b_{\kappa'}: \Lambda \otimes \Lambda \to \C^{\times}$ to be
$$b_{\kappa'}(\cdot,\cdot):= \textup{exp}\Bigl( \pi \sqrt{-1} \bigl((\cdot,\cdot)_{\kappa' -\kappa_{\textup{crit}}}\vert_{\t} \bigr)^{-1}\Bigr) \equiv q^{(\cdot, \cdot)_{\textup{st}}}.$$
Then $v_i = q^{\frac{(\alpha_i, \alpha_i)_{\textup{st}}}{2}}$ and indeed $b_{\kappa'}(\alpha_i, \alpha_j) = b_{\kappa'}(\alpha_j, \alpha_i) = v_i^{\langle \alpha_i, \check{\alpha}_j \rangle}$. Then the constructions from Section \ref{sec:quantum_groups} apply here.

The Kazhdan and Lusztig equivalence is a tensor equivalence of braided monoidal categories: $$\textup{KL}_G: (\hat{\g}_{\kappa'}\textup{-mod}^{G(\O)})^\heartsuit \tilde{\longrightarrow} (U^{\textup{Lus}}_q(\g)\textup{-mod})^\heartsuit.$$ 
Note that while the braided monoidal structure on $(U^{\textup{Lus}}_q(\g)\textup{-mod})^\heartsuit$ is given explicitly by the R-matrix and the Hopf algebra structure of $U^{\textup{Lus}}_q(\g)$, that on $(\hat{\g}_{\kappa'}\textup{-mod}^{G(\O)})^\heartsuit$ is a nontrivial construction by Kazhdan and Lusztig, inspired by Drinfeld's construction of the Drinfeld associator via the Knizhnik-Zamolodchikov equations, c.f. \cite{KazLus}.

We define the quantum Weyl module $\VV_{\lambda}$ of highest weight $\lambda$ as the image of the Weyl module $\mathbb{V}^{\kappa'}_{\lambda}$ under the (negative level) Kazhdan-Lusztig functor; i.e. $$\VV_{\lambda} := \textup{KL}_G(\mathbb{V}^{\kappa'}_{\lambda}) \in (U^{\textup{Lus}}_q(\g)\textup{-mod})^\heartsuit.$$
From \cite[Lemma~38.2]{KazLus}, our quantum Weyl module coincides with what is commonly called the Weyl module of a quantum group in the literature. Explicitly, for $\lambda \in \Lambda^+$
$$\VV_{\lambda} \cong \Ind^{U^{\textup{Lus}}_q(\g)}_{U^{\textup{Lus}}_q(\b)} \C_{\lambda}.$$

\subsection{Kazhdan-Lusztig functor at positive level} \label{sec:KL_positive}
The original Kazhdan-Lusztig functor $\textup{KL}_G$ is for negative level only. In order to define a Kazhdan-Lusztig type functor for positive level modules, we invoke the duality functor $\mathbb{D}_{G(\O)}$ defined in Section \ref{sec:affine_lie_algebra}. However, as we have seen in Lemma \ref{D_Verma}, the functor $\mathbb{D}_{G(\O)}$ does not preserve $t$-structures. Consequently our definition of the Kazhdan-Lusztig functor at positive level must involve DG categories.

Let $\kappa$ be positive (which implies that $\kappa'$ is negative) and $q=\textup{exp}(\frac{\pi \sqrt{-1}}{\kappa' - \kappa_{\textup{crit}}})$. We first derive the original Kazhdan-Lusztig equivalence to an equivalence of DG categories $\textup{KL}_G: \hat{\g}_{\kappa'}\textup{-mod}^{G(\O)} \overset{\sim}{\longrightarrow} U^{\textup{Lus}}_q(\g)\textup{-mod}$ (since the DG category $\hat{\g}_{\kappa'}\textup{-mod}^{G(\O)}$ can be recovered from its heart by ``bootstrapping"; see Section \ref{sec:affine_lie_algebra}).
Consider the contragredient duality functor for quantum group modules 
$$\mathbb{D}^q: U^{\textup{Lus}}_q(\g)\textup{-mod} \to U^{\textup{Lus}}_q(\g)\textup{-mod},$$ induced by the usual Hopf module dual at the level of abelian category. I.e. for $M \in (U^{\textup{Lus}}_q(\g)\textup{-mod})^\heartsuit$, we take the linear dual $\mathbb{D}^q(M):= \Hom_{\C}(M, \C)$ with $U^{\textup{Lus}}_q(\g)$-action twisted by the antipode.
Then we can define the Kazhdan-Lusztig functor at positive level $\kappa$
$$\textup{KL}^{\kappa}_G: \hat{\g}_{\kappa}\textup{-mod}^{G(\O)} \to U^{\textup{Lus}}_q (\g)\textup{-mod},
$$
such that the following diagram commutes:
\begin{equation} \label{diagram_positiveKL}
\xymatrix @C=6pc{
	\hat{\g}_{\kappa'}\textup{-mod}^{G(\O)} \ar[d]_{\textup{KL}_G} \ar[r]^{\mathbb{D}_{G(\O)}} & \hat{\g}_{\kappa}\textup{-mod}^{G(\O)} \ar[d]^{\textup{KL}^{\kappa}_G} \\
	U^{\textup{Lus}}_q (\g)\textup{-mod} \ar[r]_{\mathbb{D}^q} & U^{\textup{Lus}}_q (\g)\textup{-mod}
}
\end{equation}

The functor $\textup{KL}^{\kappa}_G$ is not an equivalence (for it is not $t$-exact). Nonetheless it becomes an equivalence when restricted to the full subcategory of compact objects. In fact, all arrows in \eqref{diagram_positiveKL} become equivalences when restricted to compact objects.

\section{Preparation: geometric constructions} \label{chapter:geo_constr}

\subsection{Some conventions regarding sheaves and D-modules}
For a scheme $X$, let $\O_X$ and $D_X$ denote its structure sheaf and sheaf of differential operators, respectively. 
For $X$ a scheme of finite type, the DG category of right $D_X$-modules is denoted by $\textup{D-mod}(X)$.

With the aid of higher category theory \cite{Lurie}, we extend the notion of D-modules to arbitrary prestacks: for a prestack $\mathcal{Y}$, $\textup{D-mod}(\mathcal{Y})$ is defined as the limit of $\textup{D-mod}(S)$ over the category of schemes of finite type $S$ over $\mathcal{Y}$, with structure functors given by !-pullbacks. For details see \cite{Ras-Dmod}.

On only one occasion in this paper (Section \ref{sec:proof_*thm}), we mention the \textit{ind-coherent sheaves} $\textup{IndCoh}(\mathcal{Y})$ of a prestack $\mathcal{Y}$.  The only feature we use there is the pushforward functor $f_*:\textup{IndCoh}(\mathcal{Y}) \to \textup{IndCoh}(\mathcal{Y}')$ of a morphism $f: \mathcal{Y} \to \mathcal{Y}'$. Note that under the \textit{induction functor} from $\textup{IndCoh}(\mathcal{Y})$ to $\textup{D-mod}(\mathcal{Y})$, the IndCoh pushforward corresponds to the usual de Rham (*-) pushforward of right D-modules, whenever the functors are defined.
We refer the reader to \cite{GaiRozDAG} for a full treatment of the theory of ind-coherent sheaves.

\subsection{Affine flag variety and Kashiwara-Tanisaki Localization}
Recall $I := \textup{ev}^{-1}(B)$ the Iwahori subgroup of $G(\O)$. The affine flag variety is defined as $\textup{Fl}:= G(\K)/I$, and the set of $I$-orbits of $\textup{Fl}$ is known to be indexed by the affine Weyl group $W^{\textup{aff}}:= \check{\Lambda} \rtimes W$. For $\tilde{w} \in W^{\textup{aff}}$, corresponding to the $I$-orbit $I\tilde{w}I \subset \textup{Fl}$ we denote by $j_{\tilde{w}, *}$ (resp. $j_{\tilde{w}, !}$) the costandard (resp. standard) object in the heart of the category $\textup{D-mod}(\textup{Fl})^I \cong \textup{D-mod}(I \backslash G(\K)/I)$. 

Let $\mu$ be a weight. In \cite{KT}, Kashiwara and Tanisaki constructed the category $\textup{D-mod}(\textup{Fl})^{I,\mu}$ of $\mu$-twisted $I$-equivariant right D-modules on the affine flag variety, and proved a correspondence between $\textup{D-mod}(\textup{Fl})^{I,\mu}$ and the category of $I$-equivariant modules over the affine Lie algebra at the negative level.  (See \cite[Section 2]{KT} for the detailed construction of $\textup{D-mod}(\textup{Fl})^{I,\mu}$.)

The $\mu$-twisted category admits the twisted standard and costandard objects, which will still be denoted by $j_{\tilde{w},!}$ and $j_{\tilde{w},*}$. Recall the irreducible object $j_{\tilde{w},!*}$ in the category $\textup{D-mod}(\textup{Fl})^{I,\mu}$, defined as the image of the canonical morphism $j_{\tilde{w},!} \to j_{\tilde{w},*}$. Let $\textup{D-mod}(\textup{Fl})^{I,\mu}_0$ be the full subcategory of $\textup{D-mod}(\textup{Fl})^{I,\mu}$ consisting of objects whose composition factors are isomorphic to $j_{\tilde{w},!*}$ for some $\tilde{w} \in W^{\textup{aff}}$. We state the result of Kashiwara-Tanisaki precisely as follows:

\begin{Theorem} \label{KT}
	Let $\kappa'$ be a negative level. Suppose that $\mu \in \Lambda$ satisfies $\langle \mu+\rho, \check{\alpha}_i \rangle \leq 0$ for all simple coroots $\check{\alpha}_i$.
	There is a functor between derived categories $$\mathbb{H}^{\bullet} : \textup{D-mod}(\Fl)^{I,\mu} \to \hat{\g}_{\kappa'}\textup{-mod}^I$$ with the following properties:
	\begin{enumerate}
		\item When restricted to $\textup{D-mod}(\textup{Fl})^{I,\mu}_0$, $\mathbb{H}^{n}$ is trivial for all $n \neq 0$ and $\Gamma \equiv \mathbb{H}^{0}$ is exact.
		\item $\Gamma(j_{\tilde{w},!}) \cong \mathbb{M}^{\kappa'}_{\tilde{w} \cdot \mu}$.
	\end{enumerate} 	
\end{Theorem}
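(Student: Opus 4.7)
The plan is to construct $\mathbb{H}^\bullet$ as the (derived) twisted global sections functor on $\Fl$, and then establish the exactness statement (1) and the computation (2) separately.

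First, I would set up the functor. Since $\Fl = G(\K)/I$ is an ind-scheme, I would express it as a filtered colimit $\Fl = \underset{\alpha}{\textup{colim}}\, Y_\alpha$ of finite-dimensional closed subschemes built from unions of Schubert cells, and define $\mathbb{H}^\bullet$ compatibly on each $Y_\alpha$ as derived global sections of the corresponding twisted D-module. The relevant twisting is determined jointly by the level $\kappa'$ (which provides the basic line bundle on $\Fl$ coming from the central extension $\hat{\g}_{\kappa'}$) and by the weight $\mu$ (which introduces the $I$-character twist). The $\g(\K)$-action on the space of sections is obtained from the infinitesimal right action of $G(\K)$ on $\Fl$ differentiated against the line bundle; the central element $\textbf{1}$ acts by $1$ precisely because we have used the $\kappa'$-twisting. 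The $I$-equivariance is preserved tautologically, so the output lands in $\hat{\g}_{\kappa'}\textup{-mod}^I$.

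Second, I would prove exactness on $\textup{D-mod}(\Fl)^{I,\mu}_0$. This is a Beilinson-Bernstein style vanishing: the hypothesis $\langle \mu+\rho, \check{\alpha}_i\rangle \leq 0$ together with negativity of $\kappa'$ places the twisting in the ``anti-ample'' chamber with respect to every finite Schubert variety $Y_\alpha$. One then shows by induction on the closure order of $I$-orbits, using the short exact sequences $0 \to j_{\tilde{w},!*} \to j_{\tilde{w},*} \to Q \to 0$ (and their $!$-analogs) and the long exact sequences in cohomology, that $\mathbb{H}^{n}(j_{\tilde{w},!*})=0$ for $n\neq 0$. Since $\textup{D-mod}(\Fl)^{I,\mu}_0$ is generated (under extensions) by the $j_{\tilde{w},!*}$, the vanishing propagates to the whole subcategory.

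Third, for the identification $\Gamma(j_{\tilde{w},!}) \cong \mathbb{M}^{\kappa'}_{\tilde{w}\cdot\mu}$, I would use base change along the locally closed embedding of the Schubert cell $I\tilde{w}I/I \hookrightarrow \Fl$. The cell is an affine space on which the twisted D-module underlying $j_{\tilde{w},!}$ is $!$-extended from a rank-one character local system; Frobenius reciprocity for the induction $\textup{Ind}_{\textup{Lie}(I)}^{\hat{\g}_{\kappa'}}$ then identifies the global sections with $\mathbb{M}^{\kappa'}_{\tilde{w}\cdot\mu}$, where the affine dot-action of $\tilde w$ on $\mu$ arises from the canonical half-sum-of-positive-roots shift inherent in passing between the $I$-character on the cell and the highest-weight data on the Verma module. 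Acyclicity of $j_{\tilde{w},!}$ (no higher $\mathbb{H}^n$) is a consequence of step two applied to the filtration by $!*$-objects, or alternatively from the fact that the cell is affine and the D-module is coherent.

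The main obstacle will be step two: proving the cohomological vanishing uniformly across the ind-scheme requires a careful translation of the antidominance hypothesis into a Serre-type ampleness condition on each finite-dimensional Schubert variety under the $(\kappa',\mu)$-twisting, and an inductive argument comparing the vanishing of $j_{\tilde{w},!*}$ with the known vanishing for the big Schubert cells. Once this is in place, both the functor construction and the Verma identification follow by standard manipulations with base change and Frobenius reciprocity.
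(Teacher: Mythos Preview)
The paper does not prove this theorem; it is quoted from Kashiwara--Tanisaki \cite{KT} and used as a black box. There is therefore no ``paper's own proof'' to compare your proposal against.

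Your sketch is a reasonable outline of how the Kashiwara--Tanisaki argument actually proceeds: $\mathbb{H}^\bullet$ is indeed the derived functor of twisted global sections on the ind-scheme $\Fl$, and the Verma identification in (2) does come from Frobenius reciprocity applied to the rank-one twisted local system on the affine Schubert cell. Two remarks on your plan. First, in step two you phrase the vanishing as a Serre-type ampleness statement checked on each finite Schubert variety; in the actual Kashiwara--Tanisaki proof the mechanism is somewhat different and more delicate---they use Radon-type transforms (convolution with $j_{s,*}$ and $j_{s,!}$ for simple reflections $s$) to move between different twistings, together with an induction on the affine Weyl group, rather than a direct ampleness argument on fixed $Y_\alpha$. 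The antidominance hypothesis on $\mu+\rho$ and the negativity of $\kappa'$ enter precisely to guarantee that these wall-crossing functors go in the right direction. Your inductive scheme via the short exact sequences relating $j_{\tilde w,!}$, $j_{\tilde w,*}$, and $j_{\tilde w,!*}$ is circular as stated: to conclude vanishing for $j_{\tilde w,!*}$ from those sequences you would already need vanishing for the other two, which is what you are trying to prove. Second, the acyclicity of $j_{\tilde w,!}$ in your step three cannot be deduced solely from ``the cell is affine and the D-module is coherent,'' since the global sections are taken on $\Fl$, not on the cell; one really needs the vanishing result from step two together with the fact that $j_{\tilde w,!}$ has finite length with $!*$-subquotients.
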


Recall the dot action of $\tilde{w} \equiv \check{\lambda} w$ on a weight $\mu$ given by $$\check{\lambda} w \cdot \mu = \check{\lambda} + w(\mu+\rho)-\rho.$$
In order to be compatible with the ordinary (non-twisted) left D-modules on $G/B \hookrightarrow \Fl$ and the Beilinson-Bernstein Localization Theorem for $G/B$, we choose the twisting to be $\mu-2\rho$. Therefore, $\Gamma(j_{\check{\lambda} w,!}) \cong \mathbb{M}^{\kappa'}_{\check{\lambda} + w(\mu-\rho)-\rho}$. 

For a weight $\nu = \check{\lambda} + w(\mu-\rho)-\rho$, we define the \textit{dual affine Verma module} of highest weight $\nu$ as $\mathbb{M}^{\kappa', \vee}_{\nu} := \Gamma(j_{\check{\lambda} w,*})$. It is shown in \cite{KT} that this agrees with the contragredient dual of the affine Verma module $\mathbb{M}^{\kappa'}_{\nu}$. We recall the contragredient duality in Section \ref{sec:contragredient}. 

More generally, $\mathbb{H}^{\bullet}$ intertwines the Verdier duality on $\textup{D-mod}(\Fl)^{I,\mu}$ with the contragredient duality on $\hat{\g}_{\kappa'}\textup{-mod}^I$. We can similarly define the \textit{dual Weyl module} $\mathbb{V}^{\kappa',\vee}_{\nu}$, either algebraically via the contragredient duality or geometrically through Verdier duality by virtue of Kashiwara-Tanisaki's theorem (c.f. the discussion following the proof of Lemma \ref{VermaWeyl}.)

\subsection{Convolution action on categories} \label{sec:convolution}
Let $K \subset G(\O)$ be a compact open subgroup, and $\mathsf{C}$ be a DG category acted on strongly by $K$ on the left. This means that $\mathsf{C}$ is a left module category of $\textup{D-mod}(K)$.

Following \cite[Section 22.5]{FG}, the category $\textup{D-mod}(G(\K)/K)$ of right $K$-equivariant D-modules on the loop group acts on $\mathsf{C}^K$ by convolution, denoted by
$$- \star_K - : \textup{D-mod}(G(\K)/K) \otimes \mathsf{C}^K \to \mathsf{C}.$$
The convolution is associative in the sense that, for $K, K'$ two open compact subgroups of $G(\O)$, we have
$$(M_1 \star_K M_2) \star_{K'} X \simeq M_1 \star_K (M_2 \star_{K'} X)
$$
for $M_1$, $M_2$, $X$ objects in $\textup{D-mod}(G(\K)/K)$, $\textup{D-mod}(K \backslash G(\K)/K')$ and $\mathsf{C}^{K'}$ respectively. The identity object for the convolution action is $\delta_{K, G(\K)/K}$, the delta function D-module supported on the identity coset.

Recall the DG category $\textup{D-mod}(\Gr_G)$ of (right) D-modules on the affine Grassmannian $\Gr_G := G(\K)/G(\O)$. For a subgroup $H \subset G(\K)$, we have the $H$-equivariant category $$\textup{D-mod}(\Gr_G)^H \simeq \textup{D-mod}(H \backslash \Gr_G)$$ by considering the left $H$-action on $\textup{D-mod}(\Gr_G)$. In particular, we will consider the $N(\K)$ or $N^-(\K)$-equivariant categories as defined in \cite{semiinf}, where a special $t$-structure is constructed.
The convolution action of $\textup{D-mod}(\Gr_G)^{N^-(\K)T(\O)}$ on $\hat{\g}_{\kappa} \textup{-mod}^{G(\O)}$ is:
$$- \star_{G(\O)} - : \textup{D-mod}(\Gr_G)^{N^-(\K)T(\O)} \otimes \hat{\g}_{\kappa} \textup{-mod}^{G(\O)} \to (\hat{\g}_{\kappa} \textup{-mod})^{N^-(\K)T(\O)}.$$

We also often consider the convolution action $- \star_I - :\textup{D-mod}(\Fl)^I \otimes \hat{\g}_{\kappa}\textup{-mod}^I \to \hat{\g}_{\kappa}\textup{-mod}^I$. We prove below some identities involving convolutions that is important for our calculations of semi-infinite cohomology. Unless otherwise specified, all convolution products in the remainder of this section are with respect to $I$ and will be denoted $- \star -$.

\begin{Lemma} \label{pairingconv}
	Let $A \in \hat{\g}_{\kappa'} \textup{-mod}^I$ and $B \in \hat{\g}_{\kappa} \textup{-mod}^I$. Then we have $$\langle A, j_{\check{\lambda}, *} \star B \rangle_I = \langle j_{-\check{\lambda}, *} \star A, B \rangle_I.$$
\end{Lemma}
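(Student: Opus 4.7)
The plan is to reduce the claim to a compatibility between the Arkhipov--Gaitsgory duality $\mathbb{D}_I$ and convolution by translation (co)standard objects in the affine Hecke category. By the characterization \eqref{pairing} of the equivariant pairing, both sides of the desired equality can be rewritten as Hom-complexes in $\hat{\g}_\kappa\textup{-mod}^I$:
$$\langle A,\, j_{\check{\lambda},*}\star B\rangle_I \;=\; \Hom(\mathbb{D}_I A,\; j_{\check{\lambda},*}\star B), \qquad \langle j_{-\check{\lambda},*}\star A,\, B\rangle_I \;=\; \Hom(\mathbb{D}_I(j_{-\check{\lambda},*}\star A),\; B),$$
so it suffices to produce a natural isomorphism between these two Hom-complexes.

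First I would invoke the standard fact from the affine Hecke category that the costandard object attached to a translation element is invertible under convolution, with inverse given by the standard object at the negated translation: $j_{\check{\lambda},*}\star j_{-\check{\lambda},!}\cong \delta_I \cong j_{-\check{\lambda},!}\star j_{\check{\lambda},*}$. Hence the endofunctor $j_{\check{\lambda},*}\star -$ on $\hat{\g}_\kappa\textup{-mod}^I$ is an auto-equivalence whose two-sided adjoint is convolution by $j_{-\check{\lambda},!}$. By adjunction,
$$\Hom(\mathbb{D}_I A,\; j_{\check{\lambda},*}\star B) \;\cong\; \Hom(j_{-\check{\lambda},!}\star \mathbb{D}_I A,\; B).$$

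Second, I would match the left argument with $\mathbb{D}_I(j_{-\check{\lambda},*}\star A)$ using the compatibility of $\mathbb{D}_I$ with the Hecke convolution module structure: for $\mathcal{H}\in\textup{D-mod}(I\backslash G(\K)/I)$ and $X\in\hat{\g}_{\kappa'}\textup{-mod}^I$ one expects
$$\mathbb{D}_I(\mathcal{H}\star X) \;\cong\; \mathbb{D}^{\textup{Verd}}(\mathcal{H}) \star \mathbb{D}_I(X),$$
combined with the Verdier interchange $\mathbb{D}^{\textup{Verd}}(j_{-\check{\lambda},*}) = j_{-\check{\lambda},!}$. Specializing to $\mathcal{H}=j_{-\check{\lambda},*}$ and $X=A$ identifies $j_{-\check{\lambda},!}\star \mathbb{D}_I A$ with $\mathbb{D}_I(j_{-\check{\lambda},*}\star A)$ and completes the chain of isomorphisms.

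The main obstacle I anticipate is justifying this duality--convolution compatibility rigorously in the DG setting, since $\mathbb{D}_I$ is only defined via the pairing \eqref{eq:equiv_pairing} and does not preserve the heart of the $t$-structure. Nevertheless, the compatibility is essentially built into the very definition of $\mathbb{D}_I$: the pairing $\langle -,-\rangle_I = H^\bullet_I\circ\semiinf(\g(\K),-\otimes -)$ is manifestly $G(\K)$-equivariant (with the duality swapping the two arguments), and translating this $G(\K)$-equivariance into the language of the Hecke convolution gives exactly the required statement. A safer alternative, if needed, is to verify the identity first on Iwahori-equivariant generators using Lemma \ref{D_Verma} and the explicit translation action $j_{\check{\lambda},*}\star \mathbb{M}^\kappa_\mu \cong \mathbb{M}^\kappa_{\mu+\check{\lambda}}$, tracking the weight and degree shifts, and then extend by continuity of both sides in $A$ and $B$.
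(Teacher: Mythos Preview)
Your proposal is correct and matches the paper's second proof essentially verbatim: rewrite both sides as Hom-complexes via \eqref{pairing}, pass $j_{\check{\lambda},*}$ across by adjunction to get $j_{-\check{\lambda},!}$, and then invoke the compatibility $\mathbb{D}_I(j_{-\check{\lambda},*}\star A)\cong j_{-\check{\lambda},!}\star \mathbb{D}_I A$ coming from the fact that $\mathbb{D}_I$ intertwines convolution with Verdier duality on the Hecke side (the paper cites \cite[Theorem 1.3.4]{ABBGM} for this). The paper also records a shorter first proof that avoids $\mathbb{D}_I$ altogether: one unwinds $\langle -,-\rangle_I$ as $H^\bullet_I(\semiinf(\g(\K),-\otimes-))$ and uses \cite[Proposition 22.7.3]{FG} to slide the convolution from the second argument to the first, together with the observation that switching from right to left convolution by $j_{\check{\lambda},*}$ amounts to replacing $\check{\lambda}$ by $-\check{\lambda}$.
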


\begin{proof}[Proof 1]
	Recall from \eqref{eq:equiv_pairing} that the pairing $\langle-,-\rangle_I$ is characterized by $$\langle M , N \rangle_I \cong H^{\bullet}_I(\semiinf(\g(\K), M\otimes N)).$$ By \cite[Proposition 22.7.3]{FG}, we have
	\begin{equation*}
	H^{\bullet}_I(\semiinf(\g(\K), A \otimes (j_{\check{\lambda}, *} \star B)) \cong H^{\bullet}_I(\semiinf(\g(\K), (A \star j_{\check{\lambda}, *}) \otimes B),
	\end{equation*}
	where the `right convolution action' $A \star j_{\check{\lambda}, *}$ is through the left $I$-equivariance structure of $j_{\check{\lambda}, *}$ in $\textup{D-mod}(I\backslash G(\K)/I)$, opposite to the right $I$-equivariance structure we use for the left convolution action. As a consequence of this side change we have $A \star j_{\check{\lambda}, *} \cong j_{-\check{\lambda}, *} \star A$, hence we get $\langle A, j_{\check{\lambda}, *} \star B \rangle_I = \langle j_{-\check{\lambda}, *} \star A, B \rangle_I$.
\end{proof}
\begin{proof}[Proof 2 (Sketch)]
	The lemma will follow from the identity 
	\begin{equation}\label{dualconv}
	j_{-\check{\lambda}, !} \star \mathbb{D}_I A = \mathbb{D}_I( j_{-\check{\lambda}, *} \star A),
	\end{equation}
	for we will have 
	\begin{equation*}
	\begin{split}
	\langle A, j_{\check{\lambda}, *} \star B \rangle_I & = \Hom_{\hat{\g}_{\kappa}} (\mathbb{D}_I A, j_{\check{\lambda}, *} \star B) = \Hom_{\hat{\g}_{\kappa}} (j_{-\check{\lambda}, !} \star \mathbb{D}_I A, B) \\
	& = \Hom_{\hat{\g}_{\kappa}} (\mathbb{D}_I( j_{-\check{\lambda}, *} \star A), B) = \langle j_{-\check{\lambda}, *} \star A, B \rangle_I
	\end{split}
	\end{equation*}
	by combining the identity with \eqref{pairing}.
	But then, since $\mathbb{D}_I$ commutes with convolution actions (where $\mathbb{D}_I$ operates on D-modules to the same effect as taking the Verdier dual, c.f.\cite[Theorem 1.3.4]{ABBGM}), \eqref{dualconv} follows from that $j_{-\check{\lambda}, *}$ is Verdier dual to $j_{-\check{\lambda}, !}$.
\end{proof}

\begin{Lemma} \label{averaging_id}
	Let $\tilde{w} \in W^{\textup{aff}}$ and $\overline{I\tilde{w}I}$ be the closure of the orbit $I\tilde{w}I$ in $\Fl$. For $F \in \textup{D-mod}(\Gr_G)^I$, we have the following identities:
	\begin{enumerate}
		\item $\textup{Av}^{I}_!(\tilde{w} \cdot F) \cong j_{\tilde{w}, !} \star F [\dim \overline{I\tilde{w}I}],$
		\item $\textup{Av}^{I}_*(\tilde{w} \cdot F) \cong j_{\tilde{w}, *} \star F [-\dim \overline{I\tilde{w}I}].$
	\end{enumerate}
\end{Lemma}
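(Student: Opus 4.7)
The plan is to realize translation by $\tilde{w}$ as convolution with a delta D-module and then to exploit the commutation of left averaging with right convolution.

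First, I would note that the translate $\tilde{w} \cdot F$ may be rewritten as a convolution $\delta_{\tilde{w}} \star F$, where $\delta_{\tilde{w}}$ denotes the delta-function D-module supported at the coset $\tilde{w}I \in \Fl$, regarded as an object of the appropriate convolution category. This reformulation follows from the base-change identity applied to the multiplication map $G(\K) \times^I \Fl \to \Fl$: convolution with a delta D-module concentrated at a group element realizes left translation by that element.

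Next, I would appeal to the general formalism of Section \ref{sec:convolution}: because the left and right $I$-actions on $\Fl$ commute, the left averaging functors $\textup{Av}^I_!$ and $\textup{Av}^I_*$ commute with right convolution by objects of $\textup{D-mod}(\Fl)^I$ (resp. $\textup{D-mod}(\Gr_G)^I$). Applying this to the identity $\tilde{w} \cdot F \simeq \delta_{\tilde{w}} \star F$ yields
$$\textup{Av}^I_!(\tilde{w} \cdot F) \simeq \textup{Av}^I_!(\delta_{\tilde{w}}) \star F, \qquad \textup{Av}^I_*(\tilde{w} \cdot F) \simeq \textup{Av}^I_*(\delta_{\tilde{w}}) \star F.$$
The lemma therefore reduces to the identifications
$$\textup{Av}^I_!(\delta_{\tilde{w}}) \cong j_{\tilde{w},!}[\dim \overline{I\tilde{w}I}], \qquad \textup{Av}^I_*(\delta_{\tilde{w}}) \cong j_{\tilde{w},*}[-\dim \overline{I\tilde{w}I}].$$

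These identifications are a local computation on the $I$-orbit $I\tilde{w}I/I \subset \Fl$, which is a smooth, locally closed subscheme of dimension $\dim \overline{I\tilde{w}I}$. The functor $\textup{Av}^I_!$ applied to $\delta_{\tilde{w}}$ spreads out the delta D-module along this orbit via a pro-smooth pushforward, producing the !-extension of the constant D-module on the orbit; after normalizing against the paper's convention for $j_{\tilde{w},!}$ (where this !-extension is denoted without shift) one picks up the cohomological shift $[\dim \overline{I\tilde{w}I}]$. The analogous statement for $\textup{Av}^I_*$ yields the opposite shift because $*$-pushforward along a smooth morphism of relative dimension $d$ differs from $!$-pushforward by a shift of $[-2d]$ under the renormalized conventions for pro-algebraic groups.

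The main obstacle will be the careful bookkeeping of these shift conventions in the pro-algebraic setting, since the $!$- and $*$-averaging operations must be interpreted via renormalized pushforwards, and one has to verify the sign and magnitude of the shift by restricting to a finite-dimensional Schubert cell $\overline{I\tilde{w}I}/I$. Once the normalizations are pinned down against the definition of $j_{\tilde{w},!}$ and $j_{\tilde{w},*}$ as the standard/costandard objects in $\textup{D-mod}(\Fl)^{I}$, both assertions of the lemma follow immediately.
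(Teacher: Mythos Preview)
Your proposal is correct and follows essentially the same approach as the paper: rewrite $\tilde{w}\cdot F$ as $\delta_{\tilde{w}}\star F$ and then identify the left $I$-averaging of this convolution with $j_{\tilde{w},!}\star F$ (resp.\ $j_{\tilde{w},*}\star F$) up to the expected shift. The only organizational difference is that the paper computes the averaging of $\delta_{\tilde{w}}\star F$ directly via the action/projection maps from $I\times^{I^{\tilde{w}}}\Gr_G$, and then obtains (2) from (1) by Verdier duality rather than by a parallel direct computation; your factorization ``averaging commutes with right convolution, then compute $\textup{Av}^I_{!/*}(\delta_{\tilde{w}})$'' is an equivalent and perfectly acceptable reorganization.
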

\begin{proof}
	Unravelling the definition, we see that $\tilde{w} \cdot F \cong \delta_{\tilde{w}} \star F$, where $\delta_{\tilde{w}}$ is the delta-function D-module at the coset $\tilde{w}I \in \textup{Fl}$. Since $F$ is $I$-equivariant, the shift $\tilde{w} \cdot F$ is $\textup{Ad}_{\tilde{w}}(I)$-equivariant. Let $I^{\tilde{w}}:=I \cap \textup{Ad}_{\tilde{w}}(I)$ and $\textup{act}, \textup{pr}: I \times_{I^{\tilde{w}}} \Gr_G \to \Gr_G$ be the action and projection map, respectively.
	Then we have $$\textup{Av}^I_!(\tilde{w} \cdot F) = \textup{act}_! \circ \textup{pr}^! (\delta_{\tilde{w}} \star F) \cong \textup{act}_!(\O_I \tilde{\boxtimes} (\delta_{\tilde{w}} \star F))[\dim \overline{I\tilde{w}I}] \cong j_{\tilde{w},!} \star F [\dim \overline{I\tilde{w}I}],$$ proving (1).
	
	For (2), we apply the Verdier duality on D-modules to (1). Since the Verdier duality commutes with convolution product, we get
	$$\textup{Av}^{I}_*(\tilde{w} \cdot \mathbb{D}F) \cong j_{\tilde{w}, *} \star \mathbb{D}F [-\dim \overline{I\tilde{w}I}].
	$$
	Since $\mathbb{D}$ is an equivalence, (2) is proven.
\end{proof}

\subsection{Spherical category}
We define the spherical category as 
$$\textup{Sph}_G := \textup{D-mod}(\Gr_G)^{G(\O)},$$
the DG category of (left) $G(\O)$-equivariant D-modules on $\Gr_G$, which is a categorical analogue of the spherical Hecke algebra in number theory. The $G(\O)$-orbits of $\Gr_G$ are parametrized by the dominant coweights of $G$. For each dominant coweight $\check{\lambda} \in \check{\Lambda}^+$, the corresponding $G(\O)$-orbit is $G(\O)t^{\check{\lambda}}G(\O) =: \Gr_G^{\check{\lambda}}$. Let $\textup{IC}_{\check{\lambda}} \in (\textup{Sph}_G)^{\heartsuit}$ be the IC D-module supported on the closure $\overline{\Gr_G^{\check{\lambda}}}$. It is known that $(\textup{Sph}_G)^{\heartsuit}$ is semisimple with simple objects given by these IC D-modules.

Recall from \cite{MV} that the spherical category is equipped with a convolution product $- \star_{G(\O)} -$, which makes $(\textup{Sph}_G)^\heartsuit$ a symmetric monoidal category. In fact, this coincides with the convolution defined in Section \ref{sec:convolution} by taking $\mathsf{C} = \textup{D-mod}(\Gr_G)$ and $K=G(\O)$.

Denote by $\textup{Sat}: \textup{Rep}(\check{G})^\heartsuit \to (\textup{Sph}_G)^\heartsuit$ the geometric Satake equivalence, a tensor equivalence of symmetric monoidal categories. For $\check{\lambda} \in \check{\Lambda}^+$, the functor $\textup{Sat}$ sends the irreducible representation $V_{\check{\lambda}}$ of $\check{G}$ to the IC D-module $\textup{IC}_{\check{\lambda}}$.

We define the semi-infinite orbits $S_{\check{\mu}}$ as the $N(\K)$-orbit $N(\K)t^{\check{\mu}}G(\O)$ inside $\Gr_G$.
The opposite semi-infinite orbit $T_{\check{\nu}}$ is defined to be $N^-(\K)t^{\check{\nu}}G(\O)$. It is known that the semi-infinite orbits and the opposite semi-infinite orbits are both parametrized by the coweight lattice $\check{\Lambda}$ of $G$.

The theory of weight functors developed in \cite{MV} enables us to compute cohomology of objects in the spherical category by representation theory. In particular, we will compute the !-stalk of $\textup{IC}_{\check{\lambda}}$ at the point $t^{w_0(\check{\lambda})}G(\O) \in \Gr_G$. Denote the inclusions
\begin{equation*}
\xymatrix @R=1pc @M=0.5pc{ t^{w_0(\check{\lambda})}G(\O) \ar@{^{(}->}[rr]^{\iota} \ar@{^{(}->}[dr]_{k} & & \Gr_G \\
	& S_{w_0(\check{\lambda})} \ar@{^{(}->}[ur]_{s} &
}
\end{equation*}
Then the !-stalk of $\textup{IC}_{\check{\lambda}}$ at $t^{w_0(\check{\lambda})}G(\O)$ is 
\begin{equation*}
\iota^!\, \textup{IC}_{\check{\lambda}} \cong k^! s^*\, \textup{IC}_{\check{\lambda}} [2\langle \rho, w_0(\check{\lambda})-\check{\lambda} \rangle] \cong H^\bullet_c(S_{w_0(\check{\lambda})}, \textup{IC}_{\check{\lambda}})[2\langle \rho, w_0(\check{\lambda})-\check{\lambda} \rangle].
\end{equation*}
But then the cohomology $H^\bullet_c(S_{w_0(\check{\lambda})}, \textup{IC}_{\check{\lambda}})$ vanishes except at degree $\langle 2\rho, w_0(\check{\lambda})\rangle$, and the non-vanishing part is precisely the weight functor that computes the weight multiplicity of $V_{\check{\lambda}}$ at weight $w_0(\check{\lambda})$. The representation theory tells us that it is one-dimensional. We conclude
\begin{equation} \label{!stalk_IC}
\iota^! \,\textup{IC}_{\check{\lambda}} \cong \C[-\langle 2\rho, \check{\lambda}\rangle] \cong \C[\langle 2\rho, w_0(\check{\lambda})\rangle].
\end{equation}

For arbitrary level $\kappa$, we define an action of $\textup{Rep}(\check{G})$ on $\hat{\g}_{\kappa}\textup{-mod}^{G(\O)}$ by
$$V, M \mapsto \textup{Sat}(V) \star_{G(\O)} M.
$$
According to \cite[Theorem 1.3.4]{ABBGM}, we have
\begin{equation} \label{convolution_Fr}
\textup{KL}_G(\textup{Sat}(V) \star_{G(\O)} M) \cong \textup{Fr}_q(V) \otimes \textup{KL}_G(M)
\end{equation}
for $V \in \textup{Rep}(\check{G})$ and $M \in \hat{\g}_{\kappa'}\textup{-mod}^{G(\O)}$ where $\kappa'$ is negative.

\section{Wakimoto modules} \label{chapter:wakimoto}

\subsection{First construction} \label{firstcon_Wakimoto}
The Wakimoto modules are a class of representations of affine Kac-Moody algebras, originally introduced by M. Wakimoto \cite{wakimoto1986} for $\hat{\mathfrak{sl}_2}$ and generalized to arbitrary types by B. Feigin and E. Frenkel \cite{feigin1988}. An algebraic construction of Wakimoto modules can be found in \cite{Fre}.

In \cite[Part III]{FG}, a geometric construction of Wakimoto modules via the language of chiral algebras is given. Inspired by the localization theorem of Beilinson and Bernstein \cite{BB1981}, the construction can be seen as an infinite-dimensional analog of taking sections of twisted D-modules on the big Schubert cell in $G/B$. Given a level $\kappa$, a Weyl group element $w$ and a weight $\lambda$, it produces a $\hat{\g}_{\kappa}$-module $\mathbb{W}^{\kappa,w}_{\lambda}$, called the type $w$ Wakimoto module of highest weight $\lambda$ at level $\kappa$. We shall not repeat the construction here.

A crucial property of Wakimoto modules following from this line of construction is:
\begin{Proposition}[\cite{FG} Proposition 12.5.1]  \label{Wakimoto_wt_shift}
	For a dominant coweight $\check{\lambda} \in \check{\Lambda}^+$, we have
	$$j_{\check{\lambda},!} \star_I \mathbb{W}^{\kappa, 1}_{\mu} \cong \mathbb{W}^{\kappa, 1}_{\mu + \check{\lambda}} \quad \textup{and} \quad j_{\check{\lambda},*} \star_I \mathbb{W}^{\kappa, w_0}_{\mu} \cong \mathbb{W}^{\kappa, w_0}_{\mu + \check{\lambda}}.$$
\end{Proposition}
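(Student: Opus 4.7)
The plan is to interpret both isomorphisms as instances of a single geometric translation principle, following the realization of Wakimoto modules as global sections of twisted D-modules on $N(\K)$- and $N^-(\K)$-orbits in the affine flag variety $\Fl$.

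First I would recall from \cite[Part III]{FG} that the type $1$ Wakimoto module $\mathbb{W}^{\kappa,1}_{\mu}$ is obtained by taking a canonical $\mu$-twisted rank-one D-module on the $N(\K)$-orbit $N(\K)\cdot \mathrm{pt}_0 \subset \Fl$ (with $\mathrm{pt}_0 = I/I$ the base coset), $!$-extending it to $\Fl$, and passing to global sections via a Kashiwara--Tanisaki-type localization (phrased factorizably). Symmetrically, $\mathbb{W}^{\kappa,w_0}_{\mu}$ is built from the same data on the opposite orbit $N^-(\K)\cdot \mathrm{pt}_0$, but using $*$-extension. Under this realization, a highest-weight vector is the canonical generator of the twist at $\mathrm{pt}_0$, and the Tate shift is baked into the central extension.

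For the first identity, I would interpret the convolution $j_{\check\lambda,!}\star_I(-)$, under localization, as the $!$-pushforward associated to right-translation by $t^{\check\lambda}$ on $\Fl$. Because $\check\lambda$ is dominant, one has $t^{\check\lambda} I t^{-\check\lambda}\subset I$, so the induced map between $N(\K)$-orbits
\[
  N(\K)\cdot\mathrm{pt}_0 \;\longrightarrow\; N(\K)\cdot t^{\check\lambda}\mathrm{pt}_0
\]
is a well-defined isomorphism. The $\mu$-twist pulled through this isomorphism acquires an additional $\check\lambda$-contribution from the central extension of $G(\K)$, producing exactly the $(\mu+\check\lambda)$-twist on the target orbit. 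Matching the $!$-pushforwards then identifies $j_{\check\lambda,!}\star_I \mathbb{W}^{\kappa,1}_{\mu}$ with the recipe defining $\mathbb{W}^{\kappa,1}_{\mu+\check\lambda}$.

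The second identity follows by the mirror argument with $N(\K)$ replaced by $N^-(\K)$: on $N^-$-orbits the same dominance of $\check\lambda$ flips the geometry, so that the analogous translation map is compatible with $*$-extension rather than $!$-extension, matching the swap $j_{\check\lambda,!}\leadsto j_{\check\lambda,*}$. The main obstacle will be to verify rigorously that the chosen extensions are clean, i.e.\ that no higher cohomological corrections appear in the identification of the convolution with a plain geometric translation; this cleanness is precisely what the combination of dominance of $\check\lambda$ with the correct choice of $!$ versus $*$ is designed to ensure, and it also accounts for why the statement would break if one instead used $j_{\check\lambda,*}$ for type $1$ or $j_{\check\lambda,!}$ for type $w_0$.
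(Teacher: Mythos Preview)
The paper does not prove this proposition; it is simply quoted from \cite[Proposition~12.5.1]{FG} as a property of Wakimoto modules inherited from the chiral-algebra construction in \cite[Part III]{FG}. There is therefore no proof in the present paper to compare your proposal against.

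That said, your sketch is in the right spirit and aligns with the approach in \cite{FG}: interpreting $j_{\check\lambda,?}\star_I(-)$ as a translation on the semi-infinite $N(\K)$- (resp.\ $N^-(\K)$-) orbit in $\Fl$, and tracking the resulting weight shift through the central extension. The key technical point you flag---cleanness of the relevant extensions, so that convolution with $j_{\check\lambda,!}$ (resp.\ $j_{\check\lambda,*}$) reduces to a plain geometric translation on the appropriate orbit for dominant $\check\lambda$---is indeed the crux, and is exactly what is established in \cite{FG}. If you wish to make your argument self-contained, you would need to supply that cleanness statement and the precise identification of the twist shift; otherwise you are effectively re-citing \cite{FG}, which is what the paper does.
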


\subsection{Digression: Modules over affine Kac-Moody algebras and contragredient duality} \label{sec:contragredient}
In order to compare Verma modules and Wakimoto modules algebraically, we need to introduce the notion of character of a module over affine Lie algebras. However, characters are well-defined only when weight spaces are finite-dimensional, which prompts us to introduce the action of the degree operator $t\partial_t$ and affine Kac-Moody algebras $\hat{\g}\rtimes \C t \partial_t$.

Let $\hat{\g}:= \hat{\g}_1$. Consider the affine Kac-Moody algebra $\hat{\g} \rtimes \C t \partial_t$, where the degree operator $t \partial_t$ acts on $\hat{\g}$ by
$$t \partial_t (g) := t \dfrac{dg}{dt}\quad \textrm{for} \, g \in \g(K) \quad \textrm{and} \quad t \partial_t (\textbf{1}) := 0.$$

Given any $\g$-module $V$, if we let $t\partial_t$ act on $V$ by $0$ and $\textbf{1}$ act by $\kappa$, then the induction makes $$V^{\kappa}:= \Ind^{\hat{\g} \rtimes \C t \partial_t}_{\g (\O) \oplus \C \textbf{1} \oplus \C t \partial_t}V$$
a module over the affine Kac-Moody algebra. We will consider the category of affine Kac-Moody modules where the central element $\textbf{1}$ acts by $\kappa$, and we will call its objects $\hat{\g}_{\kappa} \rtimes \C t \partial_t$-modules or modules over $\hat{\g}_{\kappa} \rtimes \C t \partial_t$. Clearly, a $\hat{\g}_{\kappa} \rtimes \C t \partial_t$-module is equivalent to a $\hat{\g}_{\kappa}$-module with the same $t \partial_t$-action.

Clearly we can define the Verma module $\mathbb{M}^{\kappa}_{\lambda}$ and Weyl module $\mathbb{V}^{\kappa}_{\lambda}$ over $\hat{\g}_{\kappa} \rtimes \C t \partial_t$ by the same induction procedure. To make the Wakimoto modules $\W^{\kappa,w}_{\lambda}$ an affine Kac-Moody module, we let the operator $t\partial_t$ act by loop rotation. Namely, the action is the unique compatible action induced from the requirement that the vacuum vector of weight $\lambda$ is annihilated by $t\partial_t$.

A weight of an affine Kac-Moody algebra is a tuple $(n, \mu, v) \in (\C t \partial_t \oplus \t \oplus \C\textbf{1})^*$, with the natural pairing given by $(n, \mu, v) \cdot (p t \partial_t, h, a\textbf{1}) = np+\mu(h)+va$. From the structure theory of affine Kac-Moody algebras, the set of roots of $\hat{\g} \rtimes \C t \partial_t$ is
$$\hat{R} = \{ (n, \alpha, 0): n \in \Z, \alpha \in R \sqcup 0 \}-\{(0, 0, 0)\},$$
where $R$ is the root system of $\g$. The roots of the form $(n, 0, 0)$ are called imaginary roots, each of which has multiplicity equal to $\dim \t$. All the other roots are called real, with multiplicity one. The set of positive roots is $$\hat{R}^+ =\{(0, \alpha, 0): \alpha \in R^+\} \sqcup \{(n, \alpha, 0): n>0, \alpha \in R \sqcup 0\}.$$

We denote by $M(\hat{\mu})$ the $\hat{\mu}$-weight space of an affine Kac-Moody module $M$.
Note that, due to the grading given by the action of $t\partial_t$, all weight spaces of $V^{\kappa}$ for any $V \in \g\textup{-mod}$ are finite-dimensional.
For an affine Kac-Moody module $M$ with finite-dimensional weight spaces, we define the character of $M$ to be the formal sum
$$ch\,M := \sum_{\hat{\mu} \in (\C t \partial_t \oplus \t \oplus \C\textbf{1})^*} \dim M(\hat{\mu})\, e^{\hat{\mu}}.
$$

The Cartan involution $\tau$ on $\g$ is a linear involution which sends the Chevalley generators as follows:
$$\tau(e_i) = -f_i, \quad \tau(f_i) = -e_i, \quad \tau(h_i) = -h_i.$$
For a $\g$-module $V$, let $V^{\vee}:= \oplus_{\mu \in \t^*} V(\mu)^*$ be the usual contragredient dual, with its $\g$-action defined by $x \cdot f(v):=f(-\tau(x)\cdot v)$ for $f \in V^*$ and $x \in \g$.

We extend $\tau$ to an involution $\hat{\tau}$ on $\hat{\g} \rtimes \C t \partial_t$ by setting $\hat{\tau}(f_{\theta} \otimes t) := -e_{\theta} \otimes t^{-1}$, $\hat{\tau}(\textbf{1}) := -\textbf{1}$ and $\hat{\tau}(t \partial_t) = -t \partial_t$. Here $\theta$ denotes the longest root of $\g$.

Given any affine Kac-Moody modules $M$ with finite-dimensional weight spaces, we similarly define its contragredient dual $M^{\vee}$ as the restricted dual space $$\bigoplus_{\hat{\mu} \in (\C t \partial_t \oplus \t \oplus \C1)^*} M(\hat{\mu})^*$$ with the $\hat{\g} \rtimes \C t \partial_t$-action given by the same formula with $\tau$ replaced by $\hat{\tau}$.

The following basic properties are evident by definition:

\begin{Proposition} \label{th:dual}
	$\,$
	\begin{enumerate}
		\item For an affine Kac-Moody module $M$ with finite-dimensional weight spaces, $(M^{\vee})^{\vee} = M$. 
		
		\item Taking contragredient dual of a representation preserves its character.
	\end{enumerate}
\end{Proposition}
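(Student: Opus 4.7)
The plan is to verify both claims directly from the definitions; neither requires real machinery, only bookkeeping about the involution $\hat{\tau}$ and finite-dimensional linear algebra.

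For part (1), the finite-dimensionality of each weight space $M(\hat{\mu})$ makes the canonical biduality map $M(\hat{\mu}) \to (M(\hat{\mu})^*)^*$ an isomorphism of vector spaces. Summing over all weights yields a natural linear isomorphism $M \xrightarrow{\sim} (M^\vee)^\vee$ at the level of the underlying restricted-dual vector space. The only point requiring checking is that this isomorphism intertwines the $\hat{\g} \rtimes \C t\partial_t$-actions, and this reduces to the identity $\hat{\tau}^2 = \textup{id}$: dualizing twice, the $x$-action is transported to $(-1)(-1)\hat{\tau}^2(x) = x$, giving back the original action on $M$.

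For part (2), I would unpack the weight-space decomposition of $M^\vee$. The key observation is that the extended Cartan subalgebra $\t \oplus \C\textbf{1} \oplus \C t\partial_t$ lies entirely in the $-1$-eigenspace of $\hat{\tau}$, since $\hat{\tau}(h_i) = -h_i$, $\hat{\tau}(\textbf{1}) = -\textbf{1}$, and $\hat{\tau}(t\partial_t) = -t\partial_t$. Consequently, for $f \in M(\hat{\mu})^*$, $v \in M(\hat{\mu})$, and $h$ in the affine Cartan,
\begin{equation*}
(h \cdot f)(v) = f(-\hat{\tau}(h) \cdot v) = f(h \cdot v) = \hat{\mu}(h)\, f(v),
\end{equation*}
so $M(\hat{\mu})^* \subseteq M^\vee(\hat{\mu})$. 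Since $M^\vee$ is by definition the direct sum of these pieces, we get $M^\vee(\hat{\mu}) = M(\hat{\mu})^*$, whose dimension equals $\dim M(\hat{\mu})$. Summing, $\textup{ch}\,M^\vee = \textup{ch}\,M$.

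The only step worth flagging as subtle (rather than genuinely hard) is ensuring that the sign convention used in defining the action on $M^\vee$ — the $-\hat{\tau}(x)$ in $x \cdot f(v) = f(-\hat{\tau}(x) \cdot v)$ — combines correctly with the fact that $\hat{\tau}$ acts by $-1$ on the Cartan to produce weight-preserving (rather than weight-negating) duality; once this is observed, both statements follow immediately.
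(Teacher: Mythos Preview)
Your proposal is correct and is exactly the kind of unpacking the paper has in mind: the paper does not actually give a proof of this proposition, merely declaring that both properties are ``evident by definition.'' Your argument---biduality on finite-dimensional weight spaces together with $\hat{\tau}^2=\textup{id}$ for (1), and the observation that $\hat{\tau}$ acts as $-1$ on the affine Cartan so that $M^\vee(\hat{\mu})=M(\hat{\mu})^*$ for (2)---is precisely the verification the reader is expected to supply.
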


\subsection{Camparing Wakimoto and Verma modules at negative level}
When the level is negative, we show that the Wakimoto module of type $w_0$ is isomorphic to the dual Verma module of the same highest weight $\lambda$ if $\lambda$ is sufficiently dominant (Theorem \ref{th:WakiVer}).

Let us fix a negative level $\kappa'$ from now on. The character of $\mathbb{M}^{\kappa'}_{\lambda}$ is by its definition $$ch\, \mathbb{M}^{\kappa'}_{\lambda} = e^{(0, \lambda, \kappa')} \biggl( \prod_{\hat{\alpha}\, \in \,{\hat{R}}^+}\left(1-e^{-\hat{\alpha}}\right)^{mult(\hat{\alpha})} \biggr)^{-1}.$$

The following lemma describes the character of the Wakimoto module $\W^{\kappa',w_0}_{\lambda}$. In particular, we see that the characters of Verma module  $\mathbb{M}^{\kappa'}_{\lambda}$ and Wakimoto module $\W^{\kappa',w_0}_{\lambda}$ are identical.

\begin{Lemma} \label{th:char}
	$$\textrm{ch}\, \W^{\kappa',w_0}_{\lambda} = e^{(0, \lambda, \kappa')} \biggl( \prod_{\hat{\alpha}\, \in \,{\hat{R}}^+}\left(1-e^{-\hat{\alpha}}\right)^{mult(\hat{\alpha})} \biggr)^{-1}.$$
\end{Lemma}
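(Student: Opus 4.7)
The plan is to read off the character of $\W^{\kappa',w_0}_\lambda$ directly from its chiral / Feigin-Frenkel realization reviewed in Section \ref{firstcon_Wakimoto}. In that construction, $\W^{\kappa',w_0}_\lambda$ carries a canonical PBW-type filtration (coming from the order filtration on the chiral algebra of twisted differential operators on the affine cell associated to $w_0$ in $\Fl$) whose associated graded is, as a $(\C t\partial_t) \oplus \hat{\t}_{\kappa'}$-module,
$$
\mathrm{gr}\,\W^{\kappa',w_0}_\lambda \;\cong\; \pi^{\kappa'-\kappa_{\text{crit}}}_\lambda \;\otimes\; \mathrm{Sym}\bigl(\n^-[[t]] \,\oplus\, t^{-1}\n[t^{-1}]\bigr),
$$
with the vacuum vector sitting at weight $(0,\lambda,\kappa')$. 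The $w_0$ in the type is absorbed into the choice of cell, so the real-root contribution is indexed as written (rather than by a Weyl-conjugate set). Since characters are additive on filtered modules, it suffices to compute the character of the right-hand side.

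The Heisenberg factor $\pi^{\kappa'-\kappa_{\text{crit}}}_\lambda$ is freely generated over $\mathrm{Sym}(\t\otimes t^{-1}\C[t^{-1}])$ from the vacuum of weight $(0,\lambda,\kappa')$, so its character equals
$$
e^{(0,\lambda,\kappa')} \prod_{n>0}\bigl(1-e^{(-n,0,0)}\bigr)^{-\dim\t},
$$
which is exactly the contribution of the imaginary positive affine roots to the product in the statement. The polynomial ring $\mathrm{Sym}\bigl(\n^-[[t]]\oplus t^{-1}\n[t^{-1}]\bigr)$ decomposes as a tensor product of one-dimensional weight pieces: from the $\n^-[[t]]$ summand one obtains generators of weight $(-n,-\alpha,0)$ for $\alpha\in R^+$, $n\geq 0$, and from the $t^{-1}\n[t^{-1}]$ summand one obtains generators of weight $(-n,\alpha,0)$ for $\alpha\in R^+$, $n>0$. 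Its character is therefore
$$
\prod_{\alpha\in R^+,\,n\geq 0}\bigl(1-e^{(-n,-\alpha,0)}\bigr)^{-1}\;\cdot\;\prod_{\alpha\in R^+,\,n>0}\bigl(1-e^{(-n,\alpha,0)}\bigr)^{-1},
$$
which accounts for every real positive affine root in $\hat R^+$ exactly once with multiplicity one. Multiplying the two contributions together produces precisely $e^{(0,\lambda,\kappa')}\prod_{\hat\alpha\in\hat R^+}(1-e^{-\hat\alpha})^{-mult(\hat\alpha)}$.

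The principal obstacle is justifying the associated-graded identification above from the chiral-algebra construction in \cite{FG}, Part III. Concretely, one must verify that the geometric $w_0$-twist produces real-root contributions indexed in exactly the arrangement above (so that the answer coincides with the Verma character rather than a Weyl-conjugate), and that the Tate shift built into the Heisenberg factor is precisely $\kappa'-\kappa_{\text{crit}}$, so that the vacuum carries weight $(0,\lambda,\kappa')$ with no residual anomaly from loop rotation. Once this PBW description is in hand, the character formula reduces to the bookkeeping over $\hat R^+$ carried out above.
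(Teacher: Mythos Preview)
Your proposal is correct and follows essentially the same route as the paper. The paper also reads off the character from the chiral realization of \cite{FG}, writing $\W^{\kappa',w_0}_{\lambda}$ as a $\C t\partial_t \oplus \t \oplus \C\textbf{1}$-module in the form
\[
\C[x^*_{\alpha, n}]_{\alpha \in R^+,\, n \leq 0} \otimes \C[x_{\alpha, m}]_{\alpha \in R^+,\, m<0} \otimes \C[y_{i, l}]_{l<0,\, i=1, \dotsc, \dim\t},
\]
with the same weight assignments you list; your $\mathrm{Sym}(\n^-[[t]]\oplus t^{-1}\n[t^{-1}])\otimes \pi_\lambda$ is exactly this. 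The only cosmetic difference is that the paper labels the Heisenberg factor as $\pi^{-\kappa-\textup{shift}}_{w_0(\lambda)}$ rather than $\pi_\lambda$, but both place the vacuum at $(0,\lambda,\kappa')$, so the character is unaffected. For the ``principal obstacle'' you flag, the paper simply cites \cite[formula (11.6)]{FG} for the underlying vector-space identification, so you can resolve that worry by the same reference rather than re-deriving it.
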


\begin{proof}
	As a $\C t \partial_t \oplus \t \oplus \C\textbf{1}$-module, $\W^{\kappa',w_0}_{\lambda}$ is equal to 
	\begin{equation} \label{character}
	\C[x^*_{\alpha, n}]_{\alpha \in R^+, n \leq 0} \otimes \C[x_{\alpha, m}]_{\alpha \in R^+, m<0} \otimes \C[y_{i, l}]_{l<0, i=1, \dotsc, \dim\t},
	\end{equation}
	where the weights of $x^*_{\alpha, n}$, $x_{\alpha, m}$ and $y_{i, l}$ are $(n, -\alpha, 0)$, $(m, \alpha, 0)$ and $(l, 0, 0)$, respectively, and the vector $1 \otimes 1 \otimes 1$ has weight $(0, \lambda, \kappa')$.
	We deduce this by \cite[formula (11.6)]{FG} and see that $\C[y_{i, l}]_{l<0, i=1, \dotsc, \dim\t}$ corresponds to the Fock module $\pi^{-\kappa-\textup{shift}}_{w_0(\lambda)}$ over the Heisenberg algebra, the tensor factor $\C[x^*_{\alpha, n}]_{\alpha \in R^+, n \leq 0}$ corresponds to $\textup{Fun}(N[[t]])$, and $\C[x_{\alpha, m}]_{\alpha \in R^+, m<0}$ arises from the induction of $\textup{Fun}(N[[t]])$ to a chiral module over chiral differential operator $\mathfrak{D}^{\textup{ch}}(N)$.
	The character formula follows immediately from \eqref{character}.
\end{proof}

We recall the Kac-Kazhdan Theorem \cite{KacKaz} on possible singular weights appearing in a Verma module over $\hat{\g}_{\kappa} \rtimes \C t \partial_t$ (where $\kappa$ is arbitrary):

\begin{Theorem} [Kac-Kazhdan]
	Let $\hat{\mu} = (n, \mu, \kappa)$ be a singular weight of $\mathbb{M}^{\kappa}_{\lambda}$, namely, $\hat{\mu}$ is a highest weight of some subquotient of $M^{\kappa}_{\lambda}$. Then the following condition holds:
	There exist a sequence of weights $(0, \lambda, \kappa) = \hat{\lambda} \equiv \hat{\mu}_1, \hat{\mu}_2, \dotsc , \hat{\mu}_n \equiv \hat{\mu}$ and a sequence of positive roots $\hat{\alpha}_k \in \hat{R}^+$, $k = 1, 2, \dotsc , n$, such that for each $k$, there is $b_k \in \Z^{>0}$ satisfying
	$$\hat{\mu}_{k+1} = \hat{\mu}_k - b_k \cdot \hat{\alpha}_k$$
	and
	$$b_k \cdot (\hat{\alpha}_k , \hat{\alpha}_k) = 2 \cdot (\hat{\alpha}_k , \hat{\mu}_k + (0, \rho, h^{\vee})).$$
	Here $(\cdot, \cdot)$ is the standard invariant bilinear form on the weights of $\hat{\g} \rtimes \C t \partial_t$.
\end{Theorem}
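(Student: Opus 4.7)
The plan is to prove the Kac-Kazhdan theorem via the Shapovalov determinant formula. First, using the standard Chevalley-type anti-involution on $\hat{\g} \rtimes \C t\partial_t$, I would construct the contravariant Shapovalov form $S_{\hat\lambda}$ on $\mathbb{M}^{\kappa}_{\lambda}$. Since this form is block-diagonal with respect to the weight decomposition, it restricts to a finite-dimensional bilinear form $S_{\hat\lambda}(\hat\mu)$ on each weight space $\mathbb{M}^{\kappa}_{\lambda}(\hat\mu)$. A standard argument identifies the radical of $S_{\hat\lambda}$ with the unique maximal proper submodule of $\mathbb{M}^{\kappa}_{\lambda}$, from which one deduces that $\hat\mu$ is a singular weight of $\mathbb{M}^{\kappa}_{\lambda}$ (in the sense of being the highest weight of some subquotient) if and only if $\det S_{\hat\lambda}(\hat\mu) = 0$.

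Second, I would establish the Kac-Kazhdan determinant formula: up to a nonzero scalar,
\begin{equation*}
\det S_{\hat\lambda}(\hat\mu) \;=\; \prod_{\hat\alpha \in \hat{R}^+,\, b \in \Z^{>0}} \Bigl[\, 2(\hat\alpha, \hat\lambda + \hat\rho) - b(\hat\alpha,\hat\alpha) \,\Bigr]^{\textup{mult}(\hat\alpha)\, \cdot\, P(\hat\lambda - \hat\mu - b\hat\alpha)},
\end{equation*}
where $\hat\rho := (0,\rho,h^\vee)$ and $P$ is the Kostant partition function for $\hat{R}^+$. The divisibility $\phi_{\hat\alpha,b}(\hat\lambda) := 2(\hat\alpha,\hat\lambda+\hat\rho) - b(\hat\alpha,\hat\alpha) \mid \det S_{\hat\lambda}(\hat\mu)$ follows by exhibiting, whenever $\phi_{\hat\alpha,b}(\hat\lambda)$ vanishes, an explicit singular vector of weight $\hat\lambda - b\hat\alpha$ in $\mathbb{M}^{\kappa}_{\lambda}$ via a Jantzen-style deformation of the highest weight; the reverse inequality (and the precise exponents) is then pinned down by a PBW-basis computation of the top-degree term in the $\hat\lambda$-coordinates on both sides.

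Finally, I would conclude by induction on $\textup{ht}(\hat\lambda - \hat\mu)$. Assuming $\hat\mu$ is singular, the vanishing of $\det S_{\hat\lambda}(\hat\mu)$ forces some factor $\phi_{\hat\alpha_1,b_1}(\hat\lambda)$ to vanish with $P(\hat\lambda - \hat\mu - b_1\hat\alpha_1) > 0$. This vanishing condition produces a singular vector of weight $\hat\mu_2 := \hat\lambda - b_1\hat\alpha_1$ in $\mathbb{M}^{\kappa}_{\lambda}$, hence an embedding $\mathbb{M}^{\kappa}_{\hat\mu_2} \hookrightarrow \mathbb{M}^{\kappa}_{\lambda}$, while the positivity of $P(\hat\lambda - \hat\mu - b_1 \hat\alpha_1)$ guarantees that $\hat\mu$ remains a singular weight of $\mathbb{M}^{\kappa}_{\hat\mu_2}$. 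Since $\textup{ht}(\hat\mu_2 - \hat\mu) < \textup{ht}(\hat\lambda - \hat\mu)$, the inductive hypothesis applied to $\mathbb{M}^{\kappa}_{\hat\mu_2}$ extends the desired chain.

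The main obstacle is the determinant formula in the second step, specifically the correct treatment of the imaginary roots. The affine positive root system includes $(n,0,0)$ for $n > 0$, each with multiplicity $\dim \t$, and for these the factor $\phi_{(n,0,0),b}$ reduces to an expression proportional to $n(\kappa + h^\vee) = n(\kappa - \kappa_{\textup{crit}})$ with no dependence on the $\t^*$-component of $\hat\lambda$; the corresponding singular vectors live entirely in the Heisenberg subalgebra $\hat\t_\kappa$ and must be analyzed via the Fock-module structure on its weight spaces. Matching the exponents on the two sides of the determinant formula requires handling the real and imaginary contributions simultaneously, and rests on a nontrivial combinatorial identity relating characters of $\mathbb{M}^{\kappa}_{\lambda}$ to products of $(1-e^{-\hat\alpha})^{\textup{mult}(\hat\alpha)}$, which is the technical heart of the original argument of Kac and Kazhdan.
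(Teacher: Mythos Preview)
The paper does not prove this theorem; it is quoted from \cite{KacKaz} and used as a black box in the proof of Theorem~\ref{th:WakiVer}. Your sketch is essentially the strategy of the original Kac--Kazhdan paper (Shapovalov form, determinant formula, then an inductive chain argument), so in that sense there is nothing to compare against here.

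That said, your sketch contains two imprecisions worth flagging. First, the biconditional in Step~1 is false as stated: $\det S_{\hat\lambda}(\hat\mu)=0$ only says the maximal submodule is nonzero at weight $\hat\mu$, which does \emph{not} force $\hat\mu$ itself to be the highest weight of a subquotient. Fortunately you only use the correct implication (singular $\Rightarrow$ determinant vanishes). Second, and more seriously, in Step~3 the condition $P(\hat\lambda-\hat\mu-b_1\hat\alpha_1)>0$ merely says $\hat\mu\le\hat\mu_2$ in the root order; it does \emph{not} by itself guarantee that $[\mathbb{M}^{\kappa}_{\hat\mu_2}:L(\hat\mu)]>0$, which is what you need to continue the induction. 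The way this gap is closed in the literature is via the Jantzen filtration and sum formula: from $[\mathbb{M}^{\kappa}_{\lambda}:L(\hat\mu)]>0$ with $\hat\mu\ne\hat\lambda$ one gets that $L(\hat\mu)$ occurs in the first Jantzen layer, and the sum formula then forces it to occur in some $\mathbb{M}^{\kappa}_{\hat\lambda-b\hat\alpha}$ with $\phi_{\hat\alpha,b}(\hat\lambda)=0$. Without this step the inductive argument does not go through.
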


The main result of this section is the following:
\begin{Theorem} \label{th:WakiVer}
	$\,$
	\begin{enumerate}
		\item Let $\kappa'$ be negative and rational. Suppose that $\lambda \in \Lambda^+$ is sufficiently dominant. Then $\W^{\kappa',w_0}_{\lambda}$ is isomorphic to $(\mathbb{M}^{\kappa'}_{\lambda})^{\vee}$ as $\hat{\g}_{\kappa'}$-modules.
		\item Let $\kappa'$ be irrational and $\lambda$ be integral. Then $\W^{\kappa',w_0}_{\lambda}$ is isomorphic to $(\mathbb{M}^{\kappa'}_{\lambda})^{\vee}$ as $\hat{\g}_{\kappa'}$-modules.
	\end{enumerate}
\end{Theorem}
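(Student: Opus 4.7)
The plan is to combine the character equality $\textup{ch}\,\W^{\kappa',w_0}_{\lambda} = \textup{ch}\,\mathbb{M}^{\kappa'}_{\lambda} = \textup{ch}\,(\mathbb{M}^{\kappa'}_{\lambda})^{\vee}$ (immediate from Lemma \ref{th:char} and Proposition \ref{th:dual}(2)) with the Kac--Kazhdan theorem. The character formula shows that the $(0,\lambda,\kappa')$-weight space of $\W^{\kappa',w_0}_{\lambda}$ is one-dimensional, spanned by the vacuum vector $v_\lambda$, and that no strictly larger affine weight appears in its support; consequently every positive root vector of $\hat{\g}_{\kappa'}\rtimes \C t\partial_t$ sends $v_\lambda$ into an empty weight space, so $v_\lambda$ is $\textup{Lie}(I)$-singular of weight $\lambda$, and Frobenius reciprocity yields a canonical nonzero morphism $\phi\colon\mathbb{M}^{\kappa'}_{\lambda}\to\W^{\kappa',w_0}_{\lambda}$.

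Dualizing, and applying the same highest-weight analysis to the $\lambda$-weight vector in $(\W^{\kappa',w_0}_{\lambda})^{\vee}$, I would produce a canonical map $\Theta\colon\W^{\kappa',w_0}_{\lambda}\to (\mathbb{M}^{\kappa'}_{\lambda})^{\vee}$ sending $v_\lambda$ to the generating $\lambda$-weight vector of the dual Verma. Given character equality, proving $\Theta$ is an isomorphism reduces to showing $\Theta$ is injective, i.e., that $\ker\Theta\subset\W^{\kappa',w_0}_{\lambda}$ is zero. I would argue this via Kac--Kazhdan: any nonzero submodule is generated by singular vectors, whose affine weights $\hat\nu$ must satisfy the Kac--Kazhdan reflection condition emanating from $\hat\lambda=(0,\lambda,\kappa')$. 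Under hypothesis (2), $\kappa'$ irrational together with $\lambda$ integral forces the admissible $\hat\nu$ to come only from $n=0$ real reflections (the finite Weyl orbit of $\hat\lambda$), since any $n\neq 0$ contribution would demand $\kappa'+h^{\vee}\in\mathbb{Q}$ and any imaginary-root contribution would demand $\kappa'+h^{\vee}=0$. Under hypothesis (1), sufficient dominance of $\lambda$ pins the relevant $\hat\nu$ to finitely many loci. In either case, a direct character comparison against Lemma \ref{th:char}, combined with the fact that $v_\lambda$ is the unique vector of weight $\hat\lambda$ and hence must contribute to the cosocle of $\W^{\kappa',w_0}_{\lambda}$, rules out the existence of any Kac--Kazhdan-admissible singular vector lying in $\ker\Theta$.

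A geometric alternative worth keeping on hand is to realize $\W^{\kappa',w_0}_{\lambda}$, in the spirit of \cite[Part III]{FG}, as the sections of a twisted D-module on the big Iwahori stratum of $\Fl$; the natural restriction-from-costandard map would then directly identify it with $\Gamma(j_{\tilde{w},*})\cong (\mathbb{M}^{\kappa'}_{\lambda})^{\vee}$ via Theorem \ref{KT} for the appropriate $\tilde{w}\in W^{\textup{aff}}$, producing $\Theta$ geometrically and bypassing the Kac--Kazhdan bookkeeping. The main obstacle in either route is precisely the control of Kac--Kazhdan singular weights of $\W^{\kappa',w_0}_{\lambda}$ at negative rational level; the hypothesis ``sufficiently dominant'' in part (1) is exactly what excludes the problematic affine-direction ($n\ne 0$) singularities, which the irrational-level hypothesis in part (2) kills automatically from the form of the Kac--Kazhdan equations.
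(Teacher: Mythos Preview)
Your overall strategy matches the paper's: combine the character equality from Lemma~\ref{th:char} with Kac--Kazhdan to show that a canonical map between $\mathbb{M}^{\kappa'}_\lambda$ and $(\W^{\kappa',w_0}_\lambda)^\vee$ is an isomorphism. But there is a real gap in how you reach the contradiction.

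The paper works with $\Phi\colon\mathbb{M}^{\kappa'}_\lambda \to (\W^{\kappa',w_0}_\lambda)^\vee$, not with your $\Theta\colon\W^{\kappa',w_0}_\lambda \to (\mathbb{M}^{\kappa'}_\lambda)^\vee$, and this choice of direction is what makes the argument close. A highest-weight vector $u\in\ker\Phi$ sits inside the Verma module, so Kac--Kazhdan applies directly and forces its weight to be $\hat\mu=(-n,\lambda-\beta,\kappa')$ with $\beta\in\textup{Span}^+(R^+)\cup\{0\}$. Character equality then produces a vector of the same weight $\hat\mu$ in $\textup{coker}\,\Phi=(\W^{\kappa',w_0}_\lambda)^\vee/\textup{Im}(\Phi)$, and the paper shows this vector must survive in the $(\n^-[t^{-1}]\oplus t^{-1}\b[t^{-1}])$-coinvariants of the dual Wakimoto. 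The explicit description \eqref{character} identifies those coinvariants with a quotient of $\C[x_{\alpha,m}]_{\alpha\in R^+,\,m<0}$, supported on weights of the form $(-n,\lambda+\beta,\kappa')$ with $\beta\in\textup{Span}^+(R^+)$; this is incompatible with $\lambda-\beta$. That coinvariants computation is the crux of the proof, and it is absent from your proposal.

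Your ``cosocle'' remark does not substitute for it. In the rational case with $\lambda$ sufficiently dominant, Kac--Kazhdan still permits real roots $(m,\alpha,0)$ with $m>0$ and $\alpha\in R^+$, so the admissible singular weights are \emph{not} confined to $t\partial_t$-degree zero; knowing that $v_\lambda$ is the unique top-weight vector tells you nothing about submodules generated in positive loop degree. You can legitimately transport Kac--Kazhdan to the composition factors of $\W^{\kappa',w_0}_\lambda$ via character equality, but you still need a structural input from the Wakimoto side---precisely the coinvariants description, or an equivalent---to pin down the sign of $\beta$ and obtain the contradiction.
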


\begin{proof} [Proof of 2.]
	We will prove $\mathbb{M}^{\kappa'}_{\lambda} \cong (\W^{\kappa',w_0}_{\lambda})^{\vee}$.
	
	First of all, since $(\W^{\kappa',w_0}_{\lambda})^{\vee}$ has highest weight the same as that of $\mathbb{M}^{\kappa'}_{\lambda}$, the universal property of Verma module implies the existence of a canonical morphism $$\Phi : \mathbb{M}^{\kappa'}_{\lambda} \to (\W^{\kappa',w_0}_{\lambda})^{\vee}.$$
	Using Lemma \ref{th:char} and Proposition \ref{th:dual}, we see that $\mathbb{M}^{\kappa'}_{\lambda}$ and $(\W^{\kappa',w_0}_{\lambda})^{\vee}$ have the same character. Hence $\Phi$ is injective if and only if it is surjective.
	
	Suppose that $\Phi$ is not injective. Then we can pick a highest weight vector $u \in \Ker(\Phi)$ of weight $\hat{\mu}$. By the equality of characters, there exists $v \in (\W^{\kappa',w_0}_{\lambda})^{\vee} / \textrm{Im}(\Phi)$ of the same weight $\hat{\mu}$. Now we claim that $v$ cannot lie in $(\n^-[t^{-1}] \oplus t^{-1}\b[t^{-1}]) (\W^{\kappa',w_0}_{\lambda})^{\vee}$. Indeed, if $v \in (\n^-[t^{-1}] \oplus t^{-1}\b[t^{-1}]) (\W^{\kappa',w_0}_{\lambda})^{\vee}$, then we can find a vector $v' \in (\W^{\kappa',w_0}_{\lambda})^{\vee}$ of weight higher than $\hat{\mu}$ such that $x \cdot v' = v$ for some $x \in \hat{\g}$. By the assumption on $\hat{\mu}$, we obtain a vector $u' \in M^{\kappa'}_{\lambda}$ with $\Phi (u')=v'$, but then $v = x \cdot \Phi(u') = \Phi (x \cdot u') \in \textrm{Im}(\Phi)$ is a contradiction.
	
	Therefore, the vector $v$ projects nontrivially onto the coinvariants $$(\W^{\kappa',w_0, \vee}_{\lambda})_{\n^-[t^{-1}] \oplus t^{-1}\b[t^{-1}]},$$ and in particular, as $\C t \partial_t \oplus \t \oplus \C1$-modules,
	\begin{equation*}
	(\W^{\kappa',w_0,\vee}_{\lambda})_{\n^-[t^{-1}] \oplus t^{-1}\b[t^{-1}]}
	\twoheadleftarrow (\W^{\kappa',w_0,\vee}_{\lambda})_{\n^-[t^{-1}] \oplus t^{-1}\t[t^{-1}]}
	\cong \C[x_{\alpha, m}]_{\alpha \in R^+, m<0}
	\end{equation*}
	(for the notation $x_{\alpha,m}$, see the proof of Lemma \ref{th:char}).
	We conclude that the weight $\hat{\mu}$ must be of the form 
	\begin{equation} \label{eq:mu}
	\hat{\mu} = (-n, \lambda + \beta, \kappa')
	\end{equation}
	for $n \in \Z^{>0}, \beta \in \textrm{Span}^+(R^+)$.
	
	On the other hand, since $\hat{\mu}$ is a highest weight of a submodule of $\mathbb{M}^{\kappa'}_{\lambda}$, there exist a sequence of weights $(0, \lambda, \kappa') = \hat{\lambda} \equiv \hat{\mu}_1, \hat{\mu}_2, \dotsc , \hat{\mu}_n \equiv \hat{\mu}$ and a sequence of positive roots $\hat{\alpha}_k, k = 1, 2, \dotsc , n$, satisfying the conditions in the Kac-Kazhdan Theorem. For each $k = 1, 2, \dotsc , n$, either $\hat{\alpha}_k$ is real or it is imaginary. We write $\hat{\mu}_k = (n_k, \mu_k, \kappa')$, $\hat{\alpha}_k = (m_k, \alpha_k, 0), m_k \geq 0, \alpha_k \in R$ if $\hat{\alpha}_k$ is real, and $\hat{\alpha}_k = (m_k, 0, 0), m_k > 0$ if $\hat{\alpha}_k$ is imaginary.
	
	Suppose that $\hat{\alpha}_k$ is real with $m_k$ nonzero. Then
	\begin{equation} \label{eq:b_k}
	\begin{split}
	b_k & = p \cdot (\hat{\alpha}_k, \hat{\mu}_k + (0, \rho, h^{\vee})) \\ &
	= p \cdot ((m_k, \alpha_k, 0),(n_k, \mu_k+\rho, \kappa' + h^{\vee})) \\ &
	= p \cdot (\kappa' + h^{\vee}) m_k + p \cdot (\mu_k + \rho, \alpha_k),
	\end{split}
	\end{equation} 
	where $p$ is some nonzero positive rational number. From our assumption that $\kappa'$ is irrational and $\lambda$ is integral, we get an irrational number $b_k$, which contradicts the condition in the Kac-Kazhdan Theorem.
	
	Therefore, $\hat{\alpha}_k$ has to be either real with $m_k = 0$ or imaginary for each $k$. This implies $\mu_{k+1} = \mu_k - \alpha_k$ for $\alpha_k \in R^+ \sqcup 0$, and so 
	\begin{equation} \label{eq:mu_wrong}
	\hat{\mu} = (-n, \lambda - \beta, \kappa'),
	\end{equation}
	where $\beta \in \textrm{Span}^+(R^+) \sqcup 0$. But then this is a contradiction to the form of $\hat{\mu}$ we obtained in (\ref{eq:mu}).
\end{proof}
\begin{proof} [Proof of 1.]
	The same argument as in the previous proof leads to formula \eqref{eq:b_k} for $b_k$ when $\hat{\alpha}_k$ is real. Now since $\kappa'$ is assumed rationally negative and $\lambda$ is sufficiently dominant, $b_k$ can possibly be positive only when $\alpha_k$ is a positive root of $\g$. Therefore either $\hat{\alpha}_k$ is imaginary, or $\hat{\alpha}_k$ is real and $\mu_{k+1} = \mu_k - b_k \alpha_k$ for $\alpha_k \in R^+$. Again we arrive at the expression $$\hat{\mu} = (-n, \lambda - \beta, \kappa')$$ where $\beta \in \textrm{Span}^+(R^+) \sqcup 0$, contradicting \eqref{eq:mu}.
\end{proof}

\subsection{Second construction, via convolution} \label{secondcon_Wakimoto}
Following \cite{quantumsemiinf}, the second approach to construct the Wakimoto modules incorporates into its definition the feature that Wakimoto modules are stable under convolution with $j_{\check{\lambda},!}$ or $j_{\check{\lambda},*}$ (Proposition \ref{Wakimoto_wt_shift}).
It is sufficient for our purpose to define two types of Wakimoto modules, $\mathbb{W}^{\kappa', *}_{\lambda}$ and $\mathbb{W}^{\kappa', w_0}_{\lambda}$, at a negative level $\kappa'$.

When $\lambda$ is sufficiently dominant, set $\mathbb{W}^{\kappa', *}_{\lambda} := \mathbb{M}^{\kappa'}_{\lambda}$. For general $\lambda$, write $\lambda = \lambda_1 - \check{\mu}$, where $\lambda_1$ is sufficiently dominant and $\check{\mu} \in \check{\Lambda}^+$, and define
\begin{equation*}
\mathbb{W}^{\kappa', *}_{\lambda} := j_{-\check{\mu}, *} \star_I \mathbb{W}^{\kappa', *}_{\lambda_1}.
\end{equation*}
Note that this is well-defined as $\mathbb{M}^{\kappa'}_{\check{\mu}+\lambda} \cong j_{\check{\mu},!} \star_I \mathbb{M}^{\kappa'}_{\lambda}$ for $\lambda$ sufficiently dominant and any $\check{\mu} \in \check{\Lambda}^+$ by Kashiwara-Tanisaki localization, and $j_{-\check{\mu},*} \star j_{\check{\mu},!} \star - \simeq \textup{Id}$ for $\check{\mu} \in \check{\Lambda}^+$. Then by definition
\begin{equation} \label{*Wakimoto}
\mathbb{W}^{\kappa', *}_{\lambda - \check{\mu}} \cong j_{-\check{\mu}, *} \star_I \mathbb{W}^{\kappa', *}_{\lambda}
\end{equation}
holds for all $\lambda \in \Lambda$ and $\check{\mu} \in \check{\Lambda}^+$. If $\lambda$ is sufficiently anti-dominant, it can be shown that $\mathbb{W}^{\kappa', *}_{\lambda} \cong \mathbb{M}^{\kappa', \vee}_{\lambda}$.

We define $\mathbb{W}^{\kappa', w_0}_{\lambda}$ analogously. Put $\mathbb{W}^{\kappa', w_0}_{\lambda} := \mathbb{M}^{\kappa', \vee}_{\lambda}$ when $\lambda$ is sufficiently dominant. For general $\lambda$, again write $\lambda = \lambda_1 - \check{\mu}$, where $\lambda_1$ is sufficiently dominant and $\check{\mu} \in \check{\Lambda}^+$, and define
\begin{equation*}
\mathbb{W}^{\kappa', w_0}_{\lambda} := j_{-\check{\mu}, !} \star_I \mathbb{W}^{\kappa', w_0}_{\lambda_1}.
\end{equation*}

We have seen that the type $w_0$ Wakimoto modules introduced in Section \ref{firstcon_Wakimoto} satisfies $\mathbb{W}^{\kappa',w_0}_{\lambda} \cong j_{-\check{\mu},!} \star_I \mathbb{W}^{\kappa',w_0}_{\lambda + \check{\mu}}$ for dominant $\check{\mu}$ (Proposition \ref{Wakimoto_wt_shift}) and $\mathbb{W}^{\kappa', w_0}_{\lambda} \cong \mathbb{M}^{\kappa', \vee}_{\lambda}$ for sufficiently dominant $\lambda$ (Theorem \ref{th:WakiVer}). Consequently $\mathbb{W}^{\kappa',w_0}_{\lambda}$ defined here using convolution agrees with the type $w_0$ Wakimoto module in Section \ref{firstcon_Wakimoto}. One can similarly show that $\mathbb{W}^{\kappa',*}_{\lambda}$ is identified with $\mathbb{W}^{\kappa',1}_{\lambda}$.

From this construction, it is clear that both $\mathbb{W}^{\kappa', *}_{\lambda}$ and $\mathbb{W}^{\kappa', w_0}_{\lambda}$ lie in the category $\hat{\g}_{\kappa'}\textup{-mod}^I$.

\subsection{Relations to semi-infinite cohomology} \label{sec:rel_to_semiinf}
As its construction involves semi-infinite cohomology, it is not surprising that Wakimoto modules are closely related to semi-infinite calculus. Below we present two formulas for computing semi-infinite cohomology, one at negative level and the other at positive level.

The first formula is a result from \cite{FG}:
\begin{Proposition} [\cite{FG} Proposition 12.4.1] \label{th:WakiBRST}
	$\semiinf(\n(\K), \mathbb{W}^{\kappa', w_0}_{\lambda})$ is isomorphic to the Fock module $\pi^{\kappa' + \textup{shift}}_{\lambda}$ (placed at homological degree 0) as complexes of modules over the Heisenberg algbera $\hat{\t}_{\kappa' + \textup{shift}}$.
\end{Proposition}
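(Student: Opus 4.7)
The plan is to exploit the free-field realization of $\W^{\kappa', w_0}_\lambda$ used in the proof of Lemma \ref{th:char} (ultimately formula (11.6) of \cite{FG}). This realizes the Wakimoto module, as a module over the subalgebra $\n(\K) \oplus \hat{\t}_{\kappa'+\textup{shift}} \subset \hat{\g}_{\kappa'}$, as a tensor product
\begin{equation*}
\W^{\kappa',w_0}_\lambda \;\cong\; \mathfrak{D}^{\textup{ch}}(N) \otimes \pi^{\kappa'+\textup{shift}}_{\lambda},
\end{equation*}
where $\mathfrak{D}^{\textup{ch}}(N)$ denotes the chiral differential operators on $N$ (the $\beta\gamma$-system built from the variables $x^*_{\alpha,n}$, $x_{\alpha,m}$), the $\n(\K)$-action is concentrated on the first factor via its natural action by left translations on $N$, and the Heisenberg action on the Fock factor is untouched.

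First I would observe that, because the Chevalley--Eilenberg differential for $\semiinf(\n(\K),-)$ depends only on the $\n(\K)$-action, the semi-infinite complex splits off the Fock factor as a tensor direct summand:
\begin{equation*}
\semiinf(\n(\K),\W^{\kappa',w_0}_\lambda) \;\cong\; \semiinf(\n(\K),\mathfrak{D}^{\textup{ch}}(N)) \otimes \pi^{\kappa'+\textup{shift}}_{\lambda}
\end{equation*}
as complexes of $\hat{\t}_{\kappa'+\textup{shift}}$-modules. The proposition therefore reduces to the assertion that
\begin{equation*}
\semiinf(\n(\K),\mathfrak{D}^{\textup{ch}}(N)) \;\cong\; \C,
\end{equation*}
concentrated in cohomological degree $0$.

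To prove this key vanishing I would unwind the semi-infinite definition as Lie algebra cohomology along $\n(\O)$ followed by Lie algebra homology along $\n(\K)/\n(\O)$. The first stage computes the algebraic de Rham complex of the pro-unipotent ind-affine group $N(\O)$ acting on $\mathfrak{D}^{\textup{ch}}(N)$, which by a Poincaré-type acyclicity collapses to the constants. The second stage is a Koszul homology whose input module, in light of the explicit generator description $\C[x^*_{\alpha,n}]_{n\le 0}\otimes \C[x_{\alpha,m}]_{m<0}$, is freely generated over the symmetric algebra on $(\n(\K)/\n(\O))^*$, so again contracts to $\C$. Together these two contractions supply the desired one-dimensional answer in degree $0$.

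The main obstacle will be keeping faithful track of the Tate shift and the weight label. The "$+\textup{shift}$" in $\hat{\t}_{\kappa'+\textup{shift}}$ is forced by the fact that the 2-cocycle on $\t(\K)$ inherited from the level-$\kappa'$ cocycle on $\g(\K)$ gets modified by the determinantal cocycle coming from the $\t(\K)$-action on $\n(\K)$ with respect to the lattice $\n(\O)$; this is exactly the critical correction $-\kappa_{\textup{crit}}$. The weight label $\lambda$ (rather than $w_0(\lambda)$) emerges after accounting for the twist by the top exterior power $\det(\n^*)$ of weight $-2\rho$ that enters the semi-infinite construction, combined with the $w_0$ twist built into the type-$w_0$ Wakimoto module; bookkeeping these two shifts cleanly is the most delicate point, and is naturally handled by verifying the statement on the vacuum vector and then extending by the Heisenberg action.
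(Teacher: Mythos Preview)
The paper does not give its own proof of this proposition; it is quoted verbatim from \cite{FG} (their Proposition~12.4.1) and used as a black box. There is therefore nothing in the paper to compare your argument against. That said, your outline is essentially the argument of \cite{FG}: use the free-field realization to factor off the Fock module, and reduce to the semi-infinite acyclicity of $\n(\K)$ acting on the chiral differential operators of $N$.

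One point deserves more care than you give it. The tensor decomposition you write down is \emph{not} a decomposition of $\hat{\t}_{\kappa'+\textup{shift}}$-modules in the naive sense: the action of $\t(\K)\subset\g(\K)$ on the Wakimoto module involves both tensor factors (it acts on the CDO factor through the conjugation grading on $N$, and on the Fock factor through the Heisenberg generators). Indeed, the proof of Lemma~\ref{th:char} records the Fock factor as $\pi_{w_0(\lambda)}$, not $\pi_\lambda$, precisely because the Heisenberg label there is with respect to the internal free-field Heisenberg, not the one induced from $\t(\K)\subset\g(\K)$. What must be checked is that, after passing to $\semiinf(\n(\K),-)$, the CDO contribution collapses to the vacuum line and the two Heisenberg actions agree up to the Tate shift and the weight correction $w_0(\lambda)\leadsto\lambda$. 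You flag this in your last paragraph, but it is not cosmetic bookkeeping: it is exactly where the identification of the Heisenberg structure happens, and it requires the explicit free-field formulas rather than the abstract tensor splitting you invoke at the start.
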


The second formula concerns the semi-infinite cohomology of modules at a positive level $\kappa$. Let $M \in \hat{\g}_{\kappa}\textup{-mod}^I$. We claim
\begin{Proposition} \label{wakimoto_pairing}
	$\langle \mathbb{W}^{\kappa', *}_{-\mu-2\rho}[\dim(G/B)] \, ,\, M \rangle_I \cong \semiinf(\n(\K), M)^{\mu}.$
\end{Proposition}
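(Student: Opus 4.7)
The plan is to reduce to the case of sufficiently anti-dominant $\mu$ via convolution, and then verify the identity in that range by combining Lemma \ref{D_Verma} with a semi-infinite Frobenius reciprocity.

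For the convolution reduction, I apply Lemma \ref{pairingconv} together with the convolution definition $j_{-\check{\lambda}, *}\star_I \mathbb{W}^{\kappa', *}_\lambda \cong \mathbb{W}^{\kappa', *}_{\lambda - \check{\lambda}}$ from Section \ref{secondcon_Wakimoto} to rewrite
\begin{equation*}
\langle \mathbb{W}^{\kappa', *}_{-\mu-2\rho}[\dim(G/B)], j_{\check{\lambda}, *}\star_I M\rangle_I \cong \langle \mathbb{W}^{\kappa', *}_{-\mu-2\rho-\check{\lambda}}[\dim(G/B)], M\rangle_I.
\end{equation*}
On the other side, convolution with $j_{\check{\lambda}, *}$ shifts the Heisenberg weight of $\semiinf(\n(\K), -)$ compatibly with the affine Weyl translation by $\check{\lambda}$. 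Matching both sides, the proposed identity at weight $\mu$ applied to $j_{\check{\lambda}, *}\star_I M$ translates into the identity at $\mu + \check{\lambda}$ applied to $M$, reducing the problem to the case in which $\mu$ is sufficiently anti-dominant. In that range $\mathbb{W}^{\kappa', *}_{-\mu-2\rho} \cong \mathbb{M}^{\kappa'}_{-\mu-2\rho}$, and Lemma \ref{D_Verma} together with the contravariance of $\mathbb{D}_I$ on cohomological shifts yields $\mathbb{D}_I(\mathbb{M}^{\kappa'}_{-\mu-2\rho}[\dim(G/B)]) \cong \mathbb{M}^\kappa_{\mu+4\rho}$, so that the LHS becomes $\Hom_{\hat{\g}_\kappa}(\mathbb{M}^\kappa_{\mu+4\rho}, M)$.

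It remains to identify $\Hom_{\hat{\g}_\kappa}(\mathbb{M}^\kappa_{\mu+4\rho}, M) \cong \semiinf(\n(\K), M)^\mu$ via semi-infinite Frobenius reciprocity: the positive-level Verma $\mathbb{M}^\kappa_{\mu+4\rho}$ is playing the role of the ``semi-infinite induction'' of the Fock module $\pi_\mu$ from the semi-infinite Borel $\hat{\b}^+ = \n(\K)\oplus \hat{\t} \subset \hat{\g}_\kappa$, with the $4\rho$ weight shift encoding the ghost-vacuum contribution of the BRST complex (split as $2\rho+2\rho$, one factor from the $\n(\O)$-determinant line as in Lemma \ref{D_Verma}, and the other from the half-density on $\n(\K)/\n(\O)$). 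A more conceptual route to the same conclusion is to compute $\semiinf(\g(\K), \mathbb{W}^{\kappa', *}_{-\mu-2\rho}\otimes M)$ directly by a semi-infinite Shapiro lemma, applied to the fact that $\mathbb{W}^{\kappa', *}_\lambda$ is (up to shift) the semi-infinite induction of $\pi_\lambda$ from $\hat{\b}^+$, with the input $\semiinf(\n^-(\K), \mathbb{W}^{\kappa', *}_\lambda) \cong \pi_\lambda$ being the symmetric form of Proposition \ref{th:WakiBRST}.

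The main obstacle is this final identification: making the semi-infinite Frobenius or Shapiro reciprocity rigorous with the correct conventions for Tate shifts, ghost-vacuum weights, and cohomological degrees, so that all shifts line up to give exactly $\semiinf(\n(\K), M)^\mu$ rather than some reshift of it. The convolution reduction in the first paragraph and the base-case application of Lemma \ref{D_Verma} are essentially formal once the weight-shift behavior of $\semiinf(\n(\K), j_{\check{\lambda}, *}\star_I -)$ is established from the definitions.
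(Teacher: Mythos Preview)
Your convolution reduction and the application of Lemma~\ref{D_Verma} are the same ingredients the paper uses, but the argument breaks at the base case. The claimed identification
\[
\Hom_{\hat{\g}_\kappa\textup{-mod}^I}(\mathbb{M}^\kappa_{\mu+4\rho}, M) \;\cong\; \semiinf(\n(\K), M)^{\mu}
\]
is false for a fixed $\mu$, however anti-dominant. The affine Verma module $\mathbb{M}^\kappa_\nu$ is induced from $\textup{Lie}(I)$, not ``semi-infinitely induced'' from $\n(\K)\oplus\hat{\t}$; the Hom on the left is nothing but $\textup{C}^\bullet(\textup{Lie}(I^0), M)^{\nu}$, ordinary Lie algebra cohomology of the pro-nilpotent $\textup{Lie}(I^0)$. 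Semi-infinite cohomology of $\n(\K)$ is a genuinely different functor (it mixes $\n(\O)$-cohomology with $t^{-1}\n[t^{-1}]$-homology), and no choice of weight shift makes these coincide for general $M$. Your proposed ``semi-infinite Frobenius reciprocity'' therefore does not exist in the form you need, and the $4\rho$ bookkeeping, while correctly computed from Lemma~\ref{D_Verma}, cannot rescue it.

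The paper's proof avoids this trap by never reducing to a single term. It invokes the colimit presentation from \cite[Proposition 1.2.3]{quantumsemiinf},
\[
\semiinf(\n(\K), M)^{\mu} \;\cong\; \underset{\check{\lambda}}{\textup{colim}}\; \textup{C}^{\bullet}\bigl(\textup{Lie}(I^0),\, \textup{Av}^{I}_*( t^{\check{\lambda}} \cdot M)\bigr)^{\check{\lambda}+\mu}[\langle \check{\lambda}, 2\rho \rangle],
\]
and then matches this colimit \emph{term by term} with the pairing side: the definition of $\mathbb{W}^{\kappa',*}$ already exhibits the LHS as the same colimit of pairings with affine Vermas, and each such pairing is identified (via Lemma~\ref{D_Verma}, \eqref{pairing}, and Lemma~\ref{averaging_id}) with the corresponding $\textup{Lie}(I^0)$-cohomology term. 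The whole point is that neither side is a single $\textup{Lie}(I^0)$-cohomology; both are the \emph{same} filtered colimit of them. Note also that your weight-shift claim $\semiinf(\n(\K), j_{\check{\lambda},*}\star M)^{\mu}\cong \semiinf(\n(\K), M)^{\mu+\check{\lambda}}$, which your reduction needs, is itself essentially equivalent to knowing that the colimit formula holds (it encodes stabilization of the transition maps), so you cannot use it as an independent input. Your alternative ``semi-infinite Shapiro'' route via $\semiinf(\g(\K), \mathbb{W}^{\kappa',*}\otimes M)$ is closer in spirit to something that could be made to work, but the paper does not go that way and it would require setting up and proving the Shapiro statement with all shifts, which is at least as much work as quoting the colimit formula.
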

\begin{proof}
	The $\mu$-component of the semi-infinite complex can be computed by
	$$\semiinf(\n(\K), M)^{\mu} \cong \underset{\check{\lambda} \in \check{\Lambda}}{\textup{colim}}\, \textup{C}^{\bullet}(\textup{Lie}(I^0), \textup{Av}^{I}_*( t^{\check{\lambda}} \cdot M))^{\check{\lambda}+\mu}[\langle \check{\lambda}, 2\rho \rangle],
	$$ as in \cite[Proposition 1.2.3]{quantumsemiinf}. On the other hand, by Lemma \ref{pairingconv} and Lemma \ref{D_Verma} we have
	\begin{equation*}
	\begin{split}
	&\quad \langle \mathbb{W}^{\kappa', *}_{-\mu-2\rho}[\dim(G/B)] \, ,\, M \rangle_I \cong \underset{\check{\lambda} \in \check{\Lambda}}{\textup{colim}}\,\langle j_{-\check{\lambda}, *} \star_I \mathbb{M}^{\kappa'}_{-\mu-2\rho + \check{\lambda}}[\dim(G/B)] \, ,\, M \rangle_I \\
	& \cong \underset{\check{\lambda} \in \check{\Lambda}}{\textup{colim}}\,\langle \mathbb{M}^{\kappa'}_{-\mu-2\rho + \check{\lambda}}[\dim(G/B)] \, ,\, j_{\check{\lambda}, *} \star_I M \rangle_I \cong \underset{\check{\lambda} \in \check{\Lambda}}{\textup{colim}}\,\Hom_{\hat{\g}_{\kappa}\textup{-mod}^I}(\mathbb{M}^{\kappa}_{\mu + \check{\lambda}},\, j_{\check{\lambda}, *} \star_I M).
	\end{split}
	\end{equation*}
	
	Now, Lemma \ref{averaging_id} gives
	\begin{equation*}
	\begin{split}
	\Hom_{\hat{\g}_{\kappa}\textup{-mod}^I}(\mathbb{M}^{\kappa}_{\mu + \check{\lambda}},\, j_{\check{\lambda}, *} \star_I M)
	\cong  \Hom_{\hat{\g}_{\kappa}\textup{-mod}^I}(\mathbb{M}^{\kappa}_{\mu + \check{\lambda}},\, \textup{Av}^I_*(t^{\check{\lambda}} \cdot M))[\langle \check{\lambda}, 2\rho \rangle] \\
	\cong \textup{C}^{\bullet}(\textup{Lie}(I^0), \textup{Av}^{I}_*( t^{\check{\lambda}} \cdot M))^{\check{\lambda}+\mu}[\langle \check{\lambda}, 2\rho \rangle].
	\end{split}
	\end{equation*}
	The proposition follows.
\end{proof}

\section{Semi-infinite cohomology vs quantum group cohomology} \label{chapter:semiinf_pos}

This section presents the main result of this paper, Theorem \ref{*thm} which compares the semi-infinite cohomology at positive level and the quantum group cohomology. For the proof of the theorem, we introduce the generalized semi-infinite cohomology functors, with the usual semi-infinite cohomology being a special case.

\subsection{Weyl modules revisited}
In this section we prove two technical lemmas concerning Weyl modules.

\begin{Lemma} \label{VermaWeyl}
	Let $\lambda$ be a dominant integral weight.
	We have isomorphic $\hat{\g}_{\kappa'}$-modules $$\textup{Av}_!^{G(\O)/I} \mathbb{M}^{\kappa'}_{\lambda} \cong \mathbb{V}^{\kappa'}_{\lambda}.$$
\end{Lemma}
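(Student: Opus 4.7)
The plan is to verify the isomorphism by Yoneda, showing that both $\textup{Av}_!^{G(\O)/I} \mathbb{M}^{\kappa'}_\lambda$ and $\mathbb{V}^{\kappa'}_\lambda$ corepresent the same functor on $\hat{\g}_{\kappa'}\textup{-mod}^{G(\O)}$. By the $(\textup{Av}_!^{G(\O)/I}, \textup{obliv})$-adjunction,
\begin{equation*}
\Hom_{\hat{\g}^{G(\O)}}(\textup{Av}_!^{G(\O)/I} \mathbb{M}^{\kappa'}_\lambda,\, N) \,\simeq\, \Hom_{\hat{\g}^I}(\mathbb{M}^{\kappa'}_\lambda,\, \textup{obliv}\, N)
\end{equation*}
for any $N \in \hat{\g}_{\kappa'}\textup{-mod}^{G(\O)}$. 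Since $\mathbb{M}^{\kappa'}_\lambda = \Ind^{\hat{\g}_{\kappa'}}_{\textup{Lie}(I)}\C_\lambda$, evaluating on $N$ in the heart identifies the right-hand side with the $(B, \lambda)$-equivariant vectors in $N^{I^0}$; as $I^0$ has Lie algebra $\n \oplus t\g[[t]]$, this equals the space of weight-$\lambda$ highest weight vectors in the $\g$-module $N^{t\g[[t]]}$.

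On the other hand, by the adjunction defining $\mathbb{V}^{\kappa'}_\lambda = \Ind^{\hat{\g}_{\kappa'}}_{\g(\O) \oplus \C\textbf{1}} V_\lambda$, we have $\Hom_{\hat{\g}^{G(\O)}}(\mathbb{V}^{\kappa'}_\lambda, N) \simeq \Hom_\g(V_\lambda, N^{t\g[[t]]})$. For dominant integral $\lambda$ and $G$-integrable $N^{t\g[[t]]}$, each weight-$\lambda$ vector annihilated by $\n$ generates a copy of $V_\lambda$ automatically (the Serre-type relations $f_i^{\langle\lambda,\check{\alpha}_i\rangle+1}\cdot v = 0$ follow from the finite-dimensionality of the $\mathfrak{sl}_2$-representation generated by such a vector under each $\langle e_i, f_i, h_i\rangle$-triple). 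Hence this Hom space also identifies with the weight-$\lambda$ highest weight vectors in $N^{t\g[[t]]}$, matching the previous expression.

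Thus the two corepresented functors agree on $(\hat{\g}_{\kappa'}\textup{-mod}^{G(\O)})^\heartsuit$. To upgrade this abelian-level identification to one in the DG category, observe that $\mathbb{M}^{\kappa'}_\lambda$ is projective in $(\hat{\g}^I)^\heartsuit$ (as an induced object from $\textup{Lie}(I)$), $\mathbb{V}^{\kappa'}_\lambda$ is projective in the relevant block of $(\hat{\g}^{G(\O)})^\heartsuit$ (being induced from $\g(\O) \oplus \C\textbf{1}$ applied to an irreducible $\g$-module), and the forgetful functor $\hat{\g}^{G(\O)} \to \hat{\g}^I$ is $t$-exact. Consequently the Hom complexes against any $N$ in the heart are concentrated in degree $0$, and the abelian-level isomorphism of functors yields the required equivalence in the DG category by Yoneda.

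The main obstacle is the last step: ensuring that no higher cohomology is produced by the derived $\textup{Av}_!^{G(\O)/I}$ applied to $\mathbb{M}^{\kappa'}_\lambda$, given the bootstrap construction of $\hat{\g}_{\kappa'}\textup{-mod}^{G(\O)}$ recalled in Section \ref{sec:affine_lie_algebra}. If direct projectivity is insufficient, a backup is to argue geometrically: realize $\mathbb{M}^{\kappa'}_\lambda$ as the sections of a standard object on $\Fl$ and identify $\textup{Av}_!^{G(\O)/I}$ with the $!$-pushforward along the proper smooth projection $\Fl \to \Gr_G$ with fiber $G(\O)/I \cong G/B$, whose concentration and agreement with $\mathbb{V}^{\kappa'}_\lambda$ follow from Kashiwara--Tanisaki localization and the geometric Satake.
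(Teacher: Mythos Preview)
Your Yoneda approach is genuinely different from the paper's and is in many ways more conceptual. The paper instead commutes $\textup{Av}_!^{G(\O)/I}$ past the induction $\Ind^{\hat{\g}_{\kappa'}}_{\g(\O)\oplus\C\mathbf{1}}$ to reduce to the finite-dimensional statement $\textup{Av}_!^{G/B} M_\lambda \cong V_\lambda$, which it then proves via Beilinson--Bernstein localization and Borel--Weil--Bott (realizing $M_\lambda^\vee$ as sections of a costandard D-module on $G/B$, applying $\textup{Av}_*^{G/B}$, and Verdier-dualizing). Your route and the paper's converge on the same core fact---that for $\lambda$ dominant the left adjoint of $\textup{Res}:\textup{Rep}(G)\to\textup{Rep}(B)$ takes $\C_\lambda$ to $V_\lambda$---but package it differently: you via corepresented functors, the paper via geometry on $G/B$.

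The gap in your write-up is exactly the one you flag: the passage from the heart to the DG category. Projectivity of $\mathbb{M}^{\kappa'}_\lambda$ and $\mathbb{V}^{\kappa'}_\lambda$ in the respective abelian hearts does \emph{not} imply that the derived Hom's in $\hat{\g}_{\kappa'}\textup{-mod}^I$ and $\hat{\g}_{\kappa'}\textup{-mod}^{G(\O)}$ are concentrated in degree $0$, precisely because (as the paper recalls in Section~\ref{sec:affine_lie_algebra}) these DG categories are not the derived categories of their hearts. What you actually need is that $\textup{RHom}_B(\C_\lambda,W)\simeq\textup{RHom}_G(V_\lambda,W)$ for every $W\in\textup{Rep}(G)$ when $\lambda$ is dominant: after factoring out the congruence subgroup $G(\O)^1$, both derived Hom's reduce to this comparison. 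This identity is Kostant's theorem on $H^\bullet(\n,V_\mu)$ (equivalently, Borel--Weil--Bott), and once invoked your Yoneda argument goes through at the derived level without appeal to projectivity. Your geometric backup via Kashiwara--Tanisaki is essentially the paper's own argument, so either completion is available; but the cleaner fix within your framework is to replace the projectivity claim by the Kostant input.
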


\begin{proof}
	By viewing the induction $\Ind^{\hat{\g}_{\kappa'}}_{\g(\O)\oplus \C \textbf{1}}$ as the left adjoint functor to the restriction functor, it is easy to verify that $$\textup{Av}^{G(\O)/I}_! \circ \Ind^{\hat{\g}_{\kappa'}}_{\g(\O)\oplus \C \textbf{1}} \circ \Ind^{\g}_{\b} \simeq \Ind^{\hat{\g}_{\kappa'}}_{\g(\O)\oplus \C \textbf{1}} \circ \textup{Av}^{G/B}_! \circ \Ind^{\g}_{\b}$$ as functors from $\b \textup{-mod}$ to $\hat{\g}_{\kappa'}\textup{-mod}^I$.
	
	We have $\textup{Av}_!^{G(\O)/I} \mathbb{M}^{\kappa'}_{\lambda} \cong \Ind^{\hat{\g}_{\kappa'}}_{\g(\O)\oplus \C \textbf{1}} \circ \textup{Av}^{G/B}_! M_{\lambda}$ as $M_{\lambda} \cong \Ind^{\g}_{\b}\C_{\lambda}$. Thus it remains to prove the isomorphism as $\g$-modules $$\textup{Av}^{G/B}_! M_{\lambda} \cong V_{\lambda}.$$
	
	By the Beilinson-Bernstein localization combined with the translation functor (c.f. \cite{GaiRepTh}), we have $$M^{\vee}_{\lambda} \cong \Gamma(G/B, \textup{Dist}(Bw_0B) \otimes_{\O_{G/B}} \mathcal{L}_{-w_0(\lambda)}),$$ where $\textup{Dist}(Bw_0B)$ is the *-pushforward of the sheaf of regular functions on $Bw_0B \subset G/B$, considered as a left D-module on $G/B$, and $\mathcal{L}_{-w_0(\lambda)}$ is the line bundle induced from $\O_G \otimes_{B} \C_{w_0(\lambda)}$.
	It is easy to see that 
	\begin{equation} \label{*av}
	\textup{Av}^{G/B}_*(\textup{Dist}(Bw_0B) \otimes_{\O_{G/B}} \mathcal{L}_{-w_0(\lambda)}) \cong \O_{G/B} \otimes_{\O_{G/B}} \mathcal{L}_{-w_0(\lambda)} \cong \mathcal{L}_{-w_0(\lambda)}.
	\end{equation}
	The global section of the line bundle $\mathcal{L}_{-w_0(\lambda)}$ (regarded as a twisted D-module) matches the irreducible $\g$-module $V_{\lambda}$ by the Borel-Weil-Bott Theorem.
	Finally, we apply the Verdier dual on both sides of \eqref{*av} and take the global sections to conclude $\textup{Av}^{G/B}_! M_{\lambda} \cong V_{\lambda}$, as the Verdier duality on $\textup{D-mod}(G/B)$ corresponds to the contragredient duality on $\g$-modules.
\end{proof}

Interpreting Lemma \ref{VermaWeyl} by Kashiwara-Tanisaki's theorem (Theorem \ref{KT}) , we see that the Weyl module $\mathbb{V}^{\kappa'}_{\lambda}$ can be constructed geometrically as the global section of the $\mu$-twisted D-module $\textup{Av}^{G(\O)/I}_! j_{\tilde{w},!}$ for some $\tilde{w} \in W^{\textup{aff}}$ and $\mu \in \Lambda$. Note that, $\textup{Av}^{G(\O)/I}_! j_{\tilde{w},!}$ is precisely the !-pushforward of $\O_{G(\O)\tilde{w}I}$, the sheaf of regular functions on the $G(\O)$-orbit $G(\O)\tilde{w}I \subset \Fl$. This enables us to define the dual Weyl module as $$\mathbb{V}^{\kappa', \vee}_{\lambda} := \Gamma(\mathbb{D} \textup{Av}^{G(\O)/I}_! j_{\tilde{w},!}) = \Gamma(\textup{Av}^{G(\O)/I}_* j_{\tilde{w},*}).$$

\begin{Lemma} \label{dualVermaWeyl}
	$\textup{Av}_*^{G(\O)/I} \mathbb{M}^{\kappa',\vee}_{-\lambda}[\dim G/B] \cong \mathbb{V}^{\kappa', \vee}_{-w_0(\lambda)-2\rho}$ for any dominant integral weight $\lambda$.
\end{Lemma}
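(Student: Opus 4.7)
The plan is to mirror the proof of Lemma~\ref{VermaWeyl}, systematically replacing $!$-functors with $*$-functors and standard modules with costandard/dual modules. The cohomological shift $[\dim G/B]$ on the left will reflect the fact that $\textup{Av}_*^{G(\O)/I}$ is concentrated in its top derived degree here, mirroring how $\textup{Av}_!^{G(\O)/I}$ in Lemma~\ref{VermaWeyl} was concentrated in degree $0$.

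First, I would reinterpret both sides geometrically via Theorem~\ref{KT} together with the definition $\mathbb{V}^{\kappa',\vee}_{\nu} := \Gamma(\textup{Av}^{G(\O)/I}_* j_{\tilde{w},*})$ of the dual Weyl module. Using the formula $\Gamma(j_{\check{\lambda}w,!}) \cong \mathbb{M}^{\kappa'}_{\check{\lambda}+w(\mu-\rho)-\rho}$ for the twisting $\mu-2\rho$, together with $w_0(\rho) = -\rho$, the dual Verma $\mathbb{M}^{\kappa',\vee}_{-\lambda}$ is realized as $\Gamma(j_{w_0,*})$ in $\textup{D-mod}(\Fl)^{I,\,-w_0(\lambda) - 2\rho}$, since $w_0(-w_0(\lambda) - \rho) - \rho = -\lambda$.

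Next, using that $\Gamma$ intertwines with $\textup{Av}_*^{G(\O)/I}$ (by the adjointness between averaging and forgetful functors on both sheaves and modules), the claim reduces to an identification of $\textup{Av}_*^{G(\O)/I}\,j_{w_0,*}[\dim G/B]$ with the geometric object underlying $\mathbb{V}^{\kappa',\vee}_{-w_0(\lambda)-2\rho}$. Since both D-modules are supported on $G(\O)/I \cong G/B \hookrightarrow \Fl$, the problem becomes finite-dimensional and can be analyzed by Beilinson-Bernstein localization on $G/B$, exactly as in the proof of Lemma~\ref{VermaWeyl}: $\textup{Av}_*^{G/B}$ applied to the $*$-extension from the open Schubert cell $Bw_0B/B$ of the appropriate twisted structure sheaf produces the corresponding line bundle on $G/B$, whose global sections via Borel-Weil-Bott yield the finite-dimensional input that induces to the desired dual Weyl module.

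The principal obstacle is the careful bookkeeping of the weight shift $-\lambda \mapsto -w_0(\lambda) - 2\rho$ and of the cohomological degree. One must verify that the combination of (i) the convention $\Gamma(\mathcal{L}_{-w_0(\mu)}) = V_\mu$ employed in the proof of Lemma~\ref{VermaWeyl}, (ii) the $-2\rho$ shift inherent to the Kashiwara-Tanisaki twisting convention on $\Fl$, and (iii) the Borel-Weil-Bott dot-action shift for the relevant (possibly antidominant) line bundle on $G/B$, indeed assembles to produce precisely the weight $-w_0(\lambda) - 2\rho$ on the right-hand side and the shift $[\dim G/B]$ on the left-hand side, with no extra shifts introduced by the conventions for Verdier duality on open versus closed Schubert cells.
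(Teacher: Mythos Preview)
Your overall strategy matches the paper's: reduce to a finite-dimensional computation on $G/B$ and invoke Beilinson--Bernstein plus Borel--Weil--Bott. There is, however, one substantive divergence worth flagging.

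You propose to realize $M^{\vee}_{-\lambda}$ via the \emph{open} Schubert cell $Bw_0B$, mirroring Lemma~\ref{VermaWeyl} verbatim. The paper instead uses the \emph{closed} cell $B1B$. The reason is the twisting: in order for $M^{\vee}_{-\lambda}$ to sit on the open cell, you would need the twist with highest weight $-\lambda$, which is antidominant; the Kashiwara--Tanisaki antidominance hypothesis $\langle \mu+\rho,\check\alpha_i\rangle\le 0$ is not satisfied by your parameter $-w_0(\lambda)-2\rho$ (equivalently $\mu=-w_0(\lambda)$, which is dominant), so the identification $\Gamma(j_{w_0,*})\cong \mathbb{M}^{\kappa',\vee}_{-\lambda}$ is not covered by Theorem~\ref{KT} as stated. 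The paper sidesteps this by first dualizing to the $!$-statement $\textup{Av}_!^{G(\O)/I}\mathbb{M}^{\kappa'}_{-\lambda}[-\dim G/B]\cong \mathbb{V}^{\kappa'}_{-w_0(\lambda)-2\rho}$, reducing to finite $\g$ via the purely representation-theoretic commutation $\textup{Av}_!\circ\Ind\simeq\Ind\circ\textup{Av}_!$ (as in Lemma~\ref{VermaWeyl}), and then working in the \emph{dominant} twist $\mu=-w_0(\lambda)-2\rho$, where $M^{\vee}_{-\lambda}=M^{\vee}_{w_0\cdot\mu}$ naturally corresponds to the closed point $B1B$. The shift $[\dim G/B]$ then appears transparently from $\textup{Av}_*^{G/B}$ applied to a point: $\textup{Av}^{G/B}_*(\textup{Dist}(B1B)\otimes\mathcal{L}_{-w_0(\mu)})\cong\mathcal{L}_{-w_0(\mu)}[-\dim G/B]$.

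Your route is not wrong in principle --- with the open cell in an antidominant twist the shift would instead emerge from higher Borel--Weil--Bott cohomology --- but it requires justifying localization outside the exactness range, which is exactly the bookkeeping you flag as the ``principal obstacle.'' The paper's choice of the closed cell in a dominant twist avoids that obstacle entirely.
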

\begin{proof}
	By the discussion preceeding this lemma, it suffices to prove the dual version of this isomorphism, namely $$\textup{Av}_!^{G(\O)/I} \mathbb{M}^{\kappa'}_{-\lambda}[-\dim G/B] \cong \mathbb{V}^{\kappa'}_{-w_0(\lambda)-2\rho}.$$
	The same argument as in the proof of Lemma \ref{VermaWeyl} reduces this to proving the $\g$-module isomorphism $$\textup{Av}^{G/B}_! M_{-\lambda}[-\dim G/B] \cong V_{-w_0(\lambda)-2\rho}.$$
	
	Since $\lambda$ is assumed dominant and integral, there is a unique dominant integral weight $\mu$ such that $w_0(\mu + \rho) = -\lambda + \rho$. Let $B1B$ be the identity $B$-orbit on $G/B$. We consider the $
	-w_0(\mu)$-twisted D-module $\textup{Dist}(B1B) \otimes_{\O_{G/B}} \mathcal{L}_{-w_0(\mu)}$, which corresponds to $M^{\vee}_{-\lambda}$ by the (twisted) Beilinson-Berstein localization. Then we have $\textup{Av}^{G/B}_*(\textup{Dist}(B1B) \otimes_{\O_{G/B}} \mathcal{L}_{-w_0(\mu)}) \cong \mathcal{L}_{-w_0(\mu)}[-\dim G/B]$. Taking global sections we obtain $$\textup{Av}^{G/B}_* M^{\vee}_{-\lambda}[\dim G/B] \cong V_{\mu} \equiv V_{-w_0(\lambda)-2\rho}.$$
	The assertion follows.
\end{proof}

\subsection{Generalized semi-infinite cohomology functor} \label{sec:gen_semiinf_functor}
Let $\mathsf{C}$ be a category acted on by $G(\K)$.
We define a functor $\mathsf{p}^- : \mathsf{C}^{N^-(\K)T(\O)} \to (\mathsf{C}^{T(\O)})_{N(\K)}$ as the composition $\textup{Av}^I_* : \mathsf{C}^{N^-(\K)T(\O)} \to \mathsf{C}^I$ followed by the equivalence $\mathsf{q}: \mathsf{C}^I \simeq (\mathsf{C}^{T(\O)})_{N(\K)}$.

\begin{Lemma} \label{lemma_p}
	The functor $\mathsf{p}^-$ is equivalent to $\textup{obliv}: \mathsf{C}^{N^-(\K)T(\O)} \to \mathsf{C}^{T(\O)}$ post-composed by the projection functor $\textup{proj}:\mathsf{C}^{T(\O)} \to (\mathsf{C}^{T(\O)})_{N(\K)}.$
\end{Lemma}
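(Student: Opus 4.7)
The strategy is to decompose $\mathsf{p}^-$ through the $T(\O)$-equivariant category by factoring $\textup{Av}^I_*$ and invoking Raskin's explicit model for the equivalence $\mathsf{q}$.

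Since $I = T(\O) \ltimes I^0$ with $I^0 \subset N(\K)$ pro-unipotent, one has $\mathsf{C}^I = (\mathsf{C}^{T(\O)})^{I^0}$, and both $N^-(\K)T(\O)$ and $I$ contain $T(\O)$. Base-change among equivariant categories yields the factorization
\[
\textup{Av}^I_* \;\simeq\; \textup{Av}^{I^0}_* \circ \textup{obliv},
\]
where $\textup{obliv}:\mathsf{C}^{N^-(\K)T(\O)} \to \mathsf{C}^{T(\O)}$ is the forgetful functor and $\textup{Av}^{I^0}_*$ is the right adjoint to $\textup{obliv}_{I^0}:\mathsf{C}^I \to \mathsf{C}^{T(\O)}$. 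By Raskin's semi-infinite invariance/coinvariance equivalence \cite{semiinf}, the equivalence $\mathsf{q}$ is canonically realized as the composite
\[
\mathsf{C}^I \xrightarrow{\textup{obliv}_{I^0}} \mathsf{C}^{T(\O)} \xrightarrow{\textup{proj}} (\mathsf{C}^{T(\O)})_{N(\K)}.
\]

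Combining these two facts gives $\mathsf{p}^- \simeq \textup{proj} \circ \textup{obliv}_{I^0} \circ \textup{Av}^{I^0}_* \circ \textup{obliv}$, so the lemma is reduced to exhibiting a canonical isomorphism
\[
\textup{proj} \circ \textup{obliv}_{I^0} \circ \textup{Av}^{I^0}_* \;\simeq\; \textup{proj}
\]
of functors $\mathsf{C}^{T(\O)} \to (\mathsf{C}^{T(\O)})_{N(\K)}$. For this, one factors $\textup{proj}$ as $\textup{proj}_2 \circ \textup{proj}_1$ through the intermediate coinvariants $(\mathsf{C}^{T(\O)})_{I^0}$. Pro-unipotence of $I^0$ provides the invariance/coinvariance equivalence $(\mathsf{C}^{T(\O)})^{I^0} \simeq (\mathsf{C}^{T(\O)})_{I^0}$ realized by $\textup{proj}_1 \circ \textup{obliv}_{I^0}$, and the adjunction $(\textup{obliv}_{I^0}, \textup{Av}^{I^0}_*)$ identifies $\textup{proj}_1 \circ \textup{obliv}_{I^0} \circ \textup{Av}^{I^0}_*$ with $\textup{proj}_1$. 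Post-composing with $\textup{proj}_2$ yields the desired identification.

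The main obstacle is the explicit identification of $\mathsf{q}$ as $\textup{proj} \circ \textup{obliv}_{I^0}$, which is the substantive content of Raskin's semi-infinite invariance/coinvariance equivalence; once this model is accepted, the remainder of the proof is formal manipulation of adjunctions in the equivariant DG setting, together with the pro-unipotent invariance/coinvariance identification for $I^0$.
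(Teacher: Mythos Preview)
Your overall strategy---factoring $\textup{Av}^I_*$ through $\mathsf{C}^{T(\O)}$, identifying $\mathsf{q}$ as $\textup{proj}\circ\textup{obliv}$, and then showing that the intermediate $\textup{obliv}\circ\textup{Av}_*$ becomes the identity after projection to $N(\K)$-coinvariants---is exactly the content of the reference the paper cites (the paper's own proof is nothing more than the sentence ``This is essentially Section~2.1.3 in \cite{quantumsemiinf}''). So in spirit you are supplying the argument the paper omits.

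There is, however, a genuine technical slip. With the paper's definition $I^0:=\textup{ev}^{-1}(N)\subset G(\O)$, the containment $I^0\subset N(\K)$ is \emph{false}: $I^0$ contains the full first congruence subgroup $G(\O)^1=\ker(\textup{ev})$, which includes, for instance, loops of the form $\exp(t\,x)$ with $x\in\n^-$. Two consequences: (i) $I^0$ does not normalize $T(\O)$, so the intermediate category $(\mathsf{C}^{T(\O)})_{I^0}$ you invoke is not defined; (ii) even granting some meaning to it, there is no factorization of $\textup{proj}$ through $I^0$-coinvariants inside $N(\K)$-coinvariants. Hence your final paragraph, where you factor $\textup{proj}=\textup{proj}_2\circ\textup{proj}_1$ through $(\mathsf{C}^{T(\O)})_{I^0}$, does not go through as written.

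The repair is to use the Iwahori decomposition $I=N^-(\O)^1\cdot T(\O)\cdot N(\O)$ rather than the cruder $I=T(\O)\cdot I^0$. Since the source object already carries $N^-(\K)T(\O)$-equivariance, it is in particular $N^-(\O)^1\cdot T(\O)$-equivariant, and the averaging $\textup{Av}^I_*$ reduces to $*$-averaging along $N(\O)$, which \emph{does} sit inside $N(\K)$. Your pro-unipotent invariants/coinvariants argument then applies with $N(\O)$ in the role you gave $I^0$, and the factorization of $\textup{proj}$ through $N(\O)$-coinvariants (now taken relative to the $N(\K)$-action, with $T(\O)$-equivariance carried along via the semidirect product $N(\K)\rtimes T(\O)$) is legitimate. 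A minor point: the equivalence $\mathsf{q}$ you cite is due to Gaitsgory in this context (the paper's reference \cite{semiinf} is his semi-infinite IC paper), not Raskin.
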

\begin{proof}
	This is essentially Section 2.1.3 in \cite{quantumsemiinf}.
\end{proof}

Assume that $\mathsf{C}$ is dualizable. Let $\langle\langle - , - \rangle\rangle : (\mathsf{C}^{\vee})^{N(\K)T(\O)} \times (\mathsf{C}^{T(\O)})_{N(\K)} \to \textup{Vect}$ be the natural pairing. Note that the natural pairing is characterized by 
\begin{equation} \label{natural_pairing}
\langle\langle \textup{Av}_!^{N(\K)} c^{\vee}, \mathsf{q}(c) \rangle\rangle \cong \langle c^{\vee}, c \rangle_I,
\end{equation}
for $c^{\vee} \in (\mathsf{C}^{\vee})^I$ and $c \in \mathsf{C}^I$.
In the followings, we take $\mathsf{C}=\hat{\g}_{\kappa} \textup{-mod}$ and its dual category $\mathsf{C}^{\vee}=\hat{\g}_{\kappa'} \textup{-mod}$ as in Section \ref{sec:affine_lie_algebra}. 

We define generalized semi-infinite cohomology functors (at positive level $\kappa$) by the following procedure: An object $\mathcal{F}$ in $\textup{D-mod}(\Gr_G)^{N^-(\K)T(\O)}$ defines a functor
\begin{equation*}
\mathsf{p}^-(\mathcal{F} \star_{G(\O)} -): \hat{\g}_{\kappa}\textup{-mod}^{G(\O)} \to (\hat{\g}_{\kappa}\textup{-mod}^{T(\O)})_{N(\K)}.
\end{equation*} 
We pair the resulting object with $\textup{Av}^{N(\K)}_! \mathbb{W}^{\kappa', *}_{-\mu-2\rho}[\dim(G/B)]$ to get the generalized semi-infinite cohomology functor corresponding to $\mathcal{F}$ and weight $\mu$:
\begin{equation} \label{def_gen_functor}
\semiinf_{\mathcal{F}}(\n(\K),-)^{\mu}:=\langle\langle \textup{Av}^{N(\K)}_! \mathbb{W}^{\kappa', *}_{-\mu-2\rho}[\dim(G/B)] \, ,\,\mathsf{p}^-(\mathcal{F} \star_{G(\O)} -)\rangle\rangle: \hat{\g}_{\kappa}\textup{-mod}^{G(\O)} \to \textup{Vect}.
\end{equation} 
To justify that it really is a generalization of the usual semi-infinite cohomology, we let $\mathcal{F} = \delta_{G(\O)}$ be the identity with respect to $\star_{G(\O)}$. Indeed, by \eqref{natural_pairing} and Proposition \ref{wakimoto_pairing}
\begin{equation*}
\begin{split}
\semiinf_{\delta_{G(\O)}}(&\n(\K),M)^\mu \cong \langle\langle \textup{Av}^{N(\K)}_! \mathbb{W}^{\kappa', *}_{-\mu-2\rho}[\dim(G/B)] \, ,\,\mathsf{p}^-(M)\rangle\rangle \cong \\
& \cong \langle \mathbb{W}^{\kappa', *}_{-\mu-2\rho}[\dim(G/B)] \, ,\, \textup{Av}^I_* (M) \rangle_I \cong \semiinf(\n(\K), M)^{\mu}.
\end{split}
\end{equation*}

The most important generalized semi-infinite cohomology functor for us will be the !*-semi-infinite cohomology functor, given by the semi-infinite IC object $\textup{IC}^{\frac{\infty}{2},-}$ in $\textup{D-mod}(\Gr_G)^{N^-(\K)T(\O)}$ which we construct below.

Consider $\check{\Lambda}^+$ as a poset with the (non-standard) partial order $$\check{\lambda}_1 \leq \check{\lambda}_2 \Leftrightarrow \check{\lambda}_2 - \check{\lambda}_1 \in \check{\Lambda}^+.$$ 
Recall the semi-infinite IC object $$\textup{IC}^{\frac{\infty}{2}, -}:= \underset{\check{\lambda} \in \check{\Lambda}^+}{\textup{colim}}\, t^{\check{\lambda}} \cdot \textup{Sat}((V_{\check{\lambda}})^*)[\langle 2\rho,\check{\lambda} \rangle] \in \textup{D-mod}(\Gr_G)^{N^-(\K)T(\O)},$$
analogous to $\textup{IC}^{\frac{\infty}{2}} \in \textup{D-mod}(\Gr_G)^{N(\K)T(\O)}$ constructed and studied in \cite{semiinf}.
For $\check{\mu} \in \check{\Lambda}^+$, the transition morphsim $$t^{\check{\lambda}} \cdot \textup{Sat}((V_{\check{\lambda}})^*)[\langle 2\rho,\check{\lambda} \rangle] \to t^{\check{\lambda}+\check{\mu}} \cdot \textup{Sat}((V_{\check{\lambda}+\check{\mu}})^*)[\langle 2\rho,\check{\lambda}+\check{\mu} \rangle]$$ in the colimit is given by
\begin{equation} \label{transition map}
\begin{split}
&t^{\check{\lambda}} \cdot \textup{Sat}((V_{\check{\lambda}})^*)[\langle 2\rho,\check{\lambda} \rangle] \to t^{\check{\lambda}} \cdot (t^{\check{\mu}} \cdot \textup{Sat}((V_{\check{\mu}})^*)[\langle 2\rho,\check{\mu} \rangle]) \star \textup{Sat}((V_{\check{\lambda}})^*)[\langle 2\rho,\check{\lambda} \rangle] \\
&\to t^{\check{\lambda}+\check{\mu}} \cdot \textup{Sat}((V_{\check{\lambda}})^* \otimes (V_{\check{\mu}})^*)[\langle 2\rho,\check{\lambda}+\check{\mu} \rangle] 
\to t^{\check{\lambda}+\check{\mu}} \cdot \textup{Sat}((V_{\check{\lambda}+\check{\mu}})^*)[\langle 2\rho,\check{\lambda}+\check{\mu} \rangle].
\end{split}
\end{equation}
Here the first arrow is given by the natural map $\delta_{t^{-\check{\mu}}G(\O)} \to  \textup{Sat}((V_{\check{\mu}})^*)[\langle 2\rho,\check{\mu} \rangle]$, which in turn is induced from the identification of the !-fiber of $\textup{Sat}((V_{\check{\mu}})^*)$ at the coset $t^{-\check{\mu}}G(\O)$ with $\C$ (see \eqref{!stalk_IC}).
The second arrow follows from the geometric Satake equivalence, and the third arrow arises from the dual of the embedding $V_{\check{\lambda}+\check{\mu}} \to V_{\check{\lambda}} \otimes V_{\check{\mu}}$.

If $\check{\lambda}$ is dominant, it is well-known that $\dim \overline{It^{\check{\lambda}}I} = \langle 2\rho,\check{\lambda} \rangle$. Then by Lemma \ref{averaging_id} we have $\textup{Av}^I_*(t^{\check{\lambda}} \cdot F) \cong j_{\check{\lambda},*} \star F [-\langle 2\rho, \check{\lambda} \rangle]$ for any $G(\O)$-equivariant $F$. Applying the functor $\textup{Av}^I_*$ to $\textup{IC}^{\frac{\infty}{2},-}$, we see that the transition morphism \eqref{transition map} becomes
\begin{equation*}
\begin{split}
&j_{\check{\lambda},*} \star \textup{Sat}((V_{\check{\lambda}})^*) \to j_{\check{\lambda},*} \star (j_{\check{\mu},*} \star \textup{Sat}((V_{\check{\mu}})^*)) \star \textup{Sat}((V_{\check{\lambda}})^*) \\
&\to j_{\check{\lambda}+\check{\mu},*} \star \textup{Sat}((V_{\check{\lambda}})^* \otimes (V_{\check{\mu}})^*) 
\to j_{\check{\lambda}+\check{\mu},*} \star \textup{Sat}((V_{\check{\lambda}+\check{\mu}})^*),
\end{split}
\end{equation*}
where the first arrow comes from $\delta_{G(\O)} \to j_{\check{\mu},*} \star j_{-\check{\mu},!} \star \delta_{G(\O)} \to j_{\check{\mu},*} \star \textup{Sat}((V_{\check{\mu}})^*)$, induced by the natural morphism $\textup{Sat}(V_{\check{\mu}}) \cong \textup{IC}_{\Gr_G^{\check{\mu}}} \to j_{\check{\mu},*} \star \delta_{G(\O)}$. We conclude that 
\begin{equation} \label{ave_IC}
\textup{Av}^{I}_* ( \textup{IC}^{\frac{\infty}{2},-} \star_{G(\O)} M) \cong \underset{\check{\lambda} \in \check{\Lambda}^+}{\textup{colim}}\, j_{\check{\lambda}, *} \star_I \textup{Sat}((V_{\check{\lambda}})^*) \star_{G(\O)} M
\end{equation}
in the category $\textup{D-mod(Gr)}^{I}$.

Now we consider the generalized semi-infinite cohomology functor $\semiinf_{\textup{IC}^{\frac{\infty}{2},-}}(\n(\K),-)^\mu$. By definition
\begin{equation*}
\begin{split}
\semiinf_{\textup{IC}^{\frac{\infty}{2},-}}(\n(\K),M)^\mu = &\langle\langle \textup{Av}^{N(\K)}_! \mathbb{W}^{\kappa', *}_{-\mu-2\rho}[\dim(G/B)] \, ,\, \mathsf{p}^-(\textup{IC}^{\frac{\infty}{2},-} \star_{G(\O)} M) \rangle\rangle \\ 
&= \langle\langle \textup{Av}^{N(\K)}_! \mathbb{W}^{\kappa', *}_{-\mu-2\rho}[\dim(G/B)] \, ,\, \mathsf{q} \circ \textup{Av}^I_* (\textup{IC}^{\frac{\infty}{2},-} \star_{G(\O)} M) \rangle\rangle,
\end{split}
\end{equation*}
which again by \eqref{natural_pairing}, \eqref{ave_IC} and Proposition \ref{wakimoto_pairing} is isomorphic to
\begin{equation} \label{semiinf_as_pairing}
\begin{split}
&\langle \mathbb{W}^{\kappa', *}_{-\mu-2\rho}[\dim(G/B)] \, ,\, \textup{Av}^I_* (\textup{IC}^{\frac{\infty}{2},-} \star_{G(\O)} M) \rangle_I \\ \cong&\,\, \semiinf(\n(\K), \textup{Av}^I_*(\textup{IC}^{\frac{\infty}{2},-}) \star_{G(\O)} M)^{\mu} \cong \underset{\check{\lambda} \in \check{\Lambda}^+}{\textup{colim}} \, \semiinf(\n(\K) ,\, j_{\check{\lambda}, *} \star_I \textup{Sat}((V_{\check{\lambda}})^*) \star_{G(\O)} M )^\mu.
\end{split}
\end{equation}

\begin{Definition} \label{gensemiinf}
	The functor $\semiinf_{!*}(\n(\K), -)^{\mu}: \hat{\g}_{\kappa} \textup{-mod}^{G(\O)} \to \textup{Vect}$ defined by
	\begin{equation*}
	M \mapsto \underset{\check{\lambda} \in \check{\Lambda}^+}{\textup{colim}} \, \semiinf(\n(\K) ,\, j_{\check{\lambda}, *} \star_I \textup{Sat}((V_{\check{\lambda}})^*) \star_{G(\O)} M )^\mu
	\end{equation*}
	is called the $\mu$-component of the !*-generalized semi-infinite cohomology functor.
\end{Definition}

We similarly define the $\mu$-component of the !-generalized semi-infinite cohomology functor as
\begin{equation*}
\semiinf_!(\n(\K),-)^\mu:= \underset{\check{\lambda} \in \check{\Lambda}^+}{\textup{colim}} \, \semiinf(\n(\K) ,\, j_{\check{\lambda}, *} \star_I j_{-\check{\lambda}, *} \star_{G(\O)} -)^\mu
\end{equation*}
The original semi-infinite cohomology functor should then be regarded as the *-version of the construction.

\subsection{The formulas} \label{sec:formulas}
Recall that we have fixed a positive level $\kappa$ and the quantum parameter is set to $q=\textup{exp}(\frac{\pi \sqrt{-1}}{\kappa' - \kappa_{\textup{crit}}})$. Let $A$ be one of the algebras $U^{\textup{KD}}_q(\n)$, $\u_q(\n)$, or $U^{\textup{Lus}}_q(\n)$ defined in Section \ref{sec:quantum_groups}.  Set $\textup{C}^{\bullet}(A, -)$ to be the derived functor of $A$-invariants, and $\textup{C}^{\bullet}(A, -)^{\mu}$ the $\mu$-component of the resulting (complex of) $\Lambda$-graded vector spaces. Here, we take as input a $U^{\textup{Lus}}_q(\g)$-module, regarded as an $A$-module via restriction.

The goal of this chapter is to prove the following formula:
\begin{Theorem} \label{*thm}
	For each weight $\mu$ we have an isomorphism in $\textup{Vect}$:
	\begin{equation*}
	\semiinf(\n(\K), M)^{\mu} \cong \textup{C}^{\bullet}(U^{\textup{Lus}}_q(\n), \textup{KL}^{\kappa}_G (M))^{\mu}.
	\end{equation*}
\end{Theorem}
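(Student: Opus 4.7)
The plan is to reduce Theorem \ref{*thm} to its !*-analogue (Theorem \ref{!*thm}), which asserts
$$\semiinf_{!*}(\n(\K), M)^{\mu} \cong \textup{C}^{\bullet}(\u_q(\n), \textup{KL}^{\kappa}_G(M))^{\mu},$$
and then upgrade from the small quantum group $\u_q(\n)$ to the Lusztig form $U^{\textup{Lus}}_q(\n)$ by summing over the coweight lattice and taking $\check{B}$-invariants, invoking the quantum Frobenius sequence \eqref{intro:q_Frob}.

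\textbf{Step 1 (the !*-formula).} By Definition \ref{gensemiinf} and the identification \eqref{semiinf_as_pairing}, $\semiinf_{!*}(\n(\K), M)^{\mu}$ is a filtered colimit over $\check{\lambda} \in \check{\Lambda}^+$ of the ordinary semi-infinite cohomologies of $j_{\check{\lambda}, *} \star_I \textup{Sat}((V_{\check{\lambda}})^*) \star_{G(\O)} M$. I would first apply Proposition \ref{wakimoto_pairing} together with the commutative diagram \eqref{diagram_positiveKL} to rewrite each term as a Hom between quantum group modules. Using the intertwining property \eqref{convolution_Fr} of the Kazhdan-Lusztig functor with spherical convolution, this Hom becomes a cohomology expression involving $\textup{Fr}_q(V_{\check{\lambda}})$. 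The transition maps \eqref{transition map} in the colimit, built from the Satake pairing of $V_{\check{\lambda}}$ with its dual and the embeddings $V_{\check{\lambda}+\check{\mu}} \hookrightarrow V_{\check{\lambda}} \otimes V_{\check{\mu}}$, should match the standard presentation of $\textup{C}^{\bullet}(\u_q(\n), -)$ as a colimit over Frobenius-twisted pieces, yielding the desired identification.

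\textbf{Step 2 (bundling and $\check{B}$-invariants).} With Theorem \ref{!*thm} in hand, form the direct sum $\bigoplus_{\check{\nu} \in \check{\Lambda}} \semiinf_{!*}(\n(\K), M)^{\check{\nu}}$, where $\check{\Lambda}$ sits inside $\Lambda$ via $\phi_{\kappa}$. The Arkhipov-Bezrukavnikov-Ginzburg theory \cite{ABG} equips this object with a natural $\check{B}$-action and identifies its $\check{B}$-invariants with $\semiinf(\n(\K), M)^0$. On the quantum side, the corresponding direct sum $\bigoplus_{\check{\nu}} \textup{C}^{\bullet}(\u_q(\n), \textup{KL}^{\kappa}_G(M))^{\check{\nu}}$ carries a $\check{B}$-action via the quantum Frobenius, and the short exact sequence \eqref{intro:q_Frob} identifies its $\check{B}$-invariants with $\textup{C}^{\bullet}(U^{\textup{Lus}}_q(\n), \textup{KL}^{\kappa}_G(M))^0$. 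Matching the two sides yields the formula at $\mu = 0$, and the same argument applied after a weight shift produces the statement for general $\mu$.

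\textbf{Main obstacle.} The genuinely difficult part is Step 1: accurately matching the Kac-Moody colimit defining $\semiinf_{!*}$ with the algebraic coinduction computing $\u_q(\n)$-cohomology of $\textup{KL}^{\kappa}_G(M)$. Correctly tracking the Tate-type shifts $[\langle 2\rho, \check{\lambda} \rangle]$, the reflected-level sign conventions introduced by $\mathbb{D}_{G(\O)}$, and the weight-versus-coweight identification under $\phi_{\kappa}$ is what makes this step delicate; once these compatibilities are settled, Step 2 is essentially formal.
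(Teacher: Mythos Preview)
Your proposal is correct and follows essentially the same route as the paper: Theorem \ref{!*thm} is established first (the paper does this as a separate proof, your Step 1), and then Theorem \ref{*thm} is deduced by assembling the $!*$-cohomologies over the coweight lattice, using the ABG equivalence to endow the resulting object with a $\check{B}$-action whose invariants recover the ordinary semi-infinite cohomology on one side and the $U^{\textup{Lus}}_q(\n)$-cohomology on the other (your Step 2, which is exactly Section \ref{sec:proof_*thm}). The only nuance is that in the paper the $\check{B}$-action is set up at the level of the D-module $\overset{\bullet}{\mathcal{F}_{!*}} \in \textup{D-mod}(\Gr_G)^I$ via ABG and then transported through the pairing, rather than imposed directly on the direct sum of cohomologies; but this is a matter of packaging, not substance.
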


Our proof of Theorem \ref{*thm} relies on the following analogous formula for the !*-generalized semi-infinite cohomology at positive level:

\begin{Theorem} \label{!*thm}
	The isomorphism
	$$\semiinf_{!*}(\n(\K), M)^{\mu} \cong \textup{C}^{\bullet}(\u_q(\n), \textup{KL}^{\kappa}_G (M))^{\mu}$$ holds for all weights $\mu$.
\end{Theorem}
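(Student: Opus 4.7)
The plan is to unfold the defining colimit of $\semiinf_{!*}(\n(\K), M)^{\mu}$ term by term, translate each term into a derived Hom on the quantum group side via the Kazhdan--Lusztig equivalence and Arkhipov--Gaitsgory duality, and then identify the colimit of the resulting complexes with $\textup{C}^{\bullet}(\u_q(\n), \textup{KL}^{\kappa}_G(M))^{\mu}$.

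Starting from Definition \ref{gensemiinf}, apply Proposition \ref{wakimoto_pairing} to express each term as an $I$-equivariant pairing with a Wakimoto module, then use Lemma \ref{pairingconv} to move $j_{\check{\lambda}, *}$ across the pairing and the convolution identity \eqref{*Wakimoto} to absorb it into the Wakimoto module itself. This rewrites the term as $\langle \mathbb{W}^{\kappa', *}_{-\mu-2\rho-\check{\lambda}}[\dim(G/B)],\, \textup{Sat}((V_{\check{\lambda}})^*) \star_{G(\O)} M \rangle_I$. Since the colimit is filtered by $\check{\Lambda}^+$, we may pass to $\check{\lambda}$ sufficiently dominant, at which point $\mathbb{W}^{\kappa', *}_{-\mu-2\rho-\check{\lambda}} \cong \mathbb{M}^{\kappa', \vee}_{-\mu-2\rho-\check{\lambda}}$ by the second construction of Wakimoto modules in Section \ref{secondcon_Wakimoto}. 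As the right factor is $G(\O)$-equivariant, the $I$-pairing upgrades to a $G(\O)$-pairing via $\textup{Av}^{G(\O)/I}_*$, and Lemma \ref{dualVermaWeyl} converts the resulting averaged dual Verma module into a dual Weyl module. Thus each term becomes $\langle \mathbb{V}^{\kappa', \vee}_{-w_0(\mu+\check{\lambda})},\, \textup{Sat}((V_{\check{\lambda}})^*) \star_{G(\O)} M \rangle_{G(\O)}$.

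Next, transport to the quantum side. The defining property of $\mathbb{D}_{G(\O)}$, the commutative square \eqref{diagram_positiveKL}, and the derived Kazhdan--Lusztig equivalence at negative level recast the pairing as a derived Hom in $U_q^{\textup{Lus}}(\g)\textup{-mod}$. Combining this with the positive-level analog of \eqref{convolution_Fr} (which converts Satake convolution into tensoring with a Frobenius twist) and Frobenius reciprocity for the induction $\mathcal{V}_{\nu} = \textup{Ind}^{U_q^{\textup{Lus}}(\g)}_{U_q^{\textup{Lus}}(\b)} \C_{\nu}$, each term of the colimit becomes
$$\textup{C}^{\bullet}\bigl(U_q^{\textup{Lus}}(\n),\, \textup{Fr}_q((V_{\check{\lambda}})^*) \otimes \textup{KL}^{\kappa}_G(M)\bigr)^{-w_0(\mu+\check{\lambda})}.$$
The quantum Frobenius exact sequence $\u_q(\n) \hookrightarrow U_q^{\textup{Lus}}(\n) \twoheadrightarrow U(\check{\n})$, together with the fact that $\u_q(\n)$ acts trivially on any Frobenius twist, decomposes this further as $\textup{C}^{\bullet}(\check{\n},\, (V_{\check{\lambda}})^* \otimes \textup{C}^{\bullet}(\u_q(\n), \textup{KL}^{\kappa}_G(M)))^{-w_0(\mu+\check{\lambda})}$.

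The last and most delicate step is to show that the filtered colimit of these complexes stabilizes to $\textup{C}^{\bullet}(\u_q(\n), \textup{KL}^{\kappa}_G(M))^{\mu}$. The transition maps of the colimit are induced by \eqref{transition map} and, on the quantum side, come from the natural embedding $\C \hookrightarrow (V_{\check{\mu}})^* \otimes V_{\check{\mu}}$ governed by the highest-weight vector of $(V_{\check{\mu}})^*$ (via the !-stalk identification \eqref{!stalk_IC}). The main obstacle is to check that these transitions conspire, across the filtered system indexed by $\check{\Lambda}^+$, to implement precisely the $\check{N}$-integration that collapses the outer $\textup{C}^{\bullet}(\check{\n}, -)$ layer and leaves only the $\u_q(\n)$-cohomology piece in weight $\mu$. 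Getting this stabilization right requires a careful analysis of the $\check{T}$-weight filtrations on the $(V_{\check{\lambda}})^*$ and on the quantum cohomology complex, together with an analog of the classical fact that a suitable colimit of $\check{\n}$-invariants of tensor products with $(V_{\check{\lambda}})^*$, under a matching weight shift, collapses to a single weight component of the input.
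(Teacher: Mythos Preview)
Your reduction of the semi-infinite side to a colimit of quantum Homs follows the paper's route almost verbatim, but there is a bookkeeping slip in the ``transport to the quantum side'' step. The contragredient duality $\mathbb{D}^q$ built into $\textup{KL}^{\kappa}_G$ flips duals: one has $\textup{KL}^{\kappa}_G(\textup{Sat}((V_{\check{\lambda}})^*)\star_{G(\O)} M)\cong \textup{Fr}_q(V_{\check{\lambda}})\otimes \textup{KL}^{\kappa}_G(M)$, and likewise $\mathbb{D}_{G(\O)}\mathbb{V}^{\kappa',\vee}_{-w_0(\mu+\check{\lambda})}$ corresponds under $\textup{KL}^{\kappa}_G$ to $\mathcal{V}_{\mu+\check{\lambda}}$. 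So the correct term is
\[
\Hom_{U^{\textup{Lus}}_q(\g)}\bigl(\mathcal{V}_{\mu+\check{\lambda}},\,\textup{Fr}_q(V_{\check{\lambda}})\otimes \textup{KL}^{\kappa}_G(M)\bigr)
\;\cong\;
\Hom_{U^{\textup{Lus}}_q(\b)}\bigl(\C_{\mu+\check{\lambda}},\,\textup{Fr}_q(V_{\check{\lambda}})\otimes \textup{KL}^{\kappa}_G(M)\bigr),
\]
not the expression with $(V_{\check{\lambda}})^*$ and weight $-w_0(\mu+\check{\lambda})$ that you wrote. This matters for the next step.

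The genuine gap is your final ``delicate'' step, which you correctly flag as unfinished. The paper does \emph{not} proceed by splitting $\textup{C}^{\bullet}(U^{\textup{Lus}}_q(\n),-)$ through the quantum Frobenius into an outer $\textup{C}^{\bullet}(\check{\n},-)$ layer and then arguing that the colimit collapses it. Instead, it computes the $\u_q(\n)$-cohomology side directly as the \emph{same} colimit. Writing $\overset{\bullet}{\u}_q(\b)$ for the small quantum Borel with the full Lusztig torus, one uses the adjunction
\[
\textup{C}^{\bullet}(\u_q(\n),\textup{KL}^{\kappa}_G(M))^{\mu}
=\Hom_{\overset{\bullet}{\u}_q(\b)}(\C_{\mu},\textup{KL}^{\kappa}_G(M))
\cong\Hom_{U^{\textup{Lus}}_q(\b)}\bigl(\C_{\mu},\,\textup{CoInd}^{U^{\textup{Lus}}_q(\b)}_{\overset{\bullet}{\u}_q(\b)}(\C)\otimes \textup{KL}^{\kappa}_G(M)\bigr),
\]
together with the identifications $\textup{CoInd}^{U^{\textup{Lus}}_q(\b)}_{\overset{\bullet}{\u}_q(\b)}(\C)\cong \textup{Fr}_q(\O_{\check{B}/\check{T}})$ and $\O_{\check{B}/\check{T}}\simeq \underset{\check{\lambda}\in\check{\Lambda}^+}{\textup{colim}}\;\C_{-\check{\lambda}}\otimes V_{\check{\lambda}}$ as $\check{B}$-modules (the latter from \cite{ABBGM}). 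Plugging in and absorbing $\C_{-\check{\lambda}}$ yields exactly
\[
\underset{\check{\lambda}\in\check{\Lambda}^+}{\textup{colim}}\;\Hom_{U^{\textup{Lus}}_q(\b)}\bigl(\C_{\mu+\check{\lambda}},\,\textup{Fr}_q(V_{\check{\lambda}})\otimes \textup{KL}^{\kappa}_G(M)\bigr),
\]
which matches the semi-infinite side term by term \emph{and} identifies the transition maps. The stabilization you worry about is thus packaged into the single formula for the coinduced module; no separate analysis of $\check{\n}$-cohomology or weight filtrations is needed. Your alternative route via the Hochschild--Serre-type decomposition could presumably be pushed through (it amounts to the same colimit presentation of $\O_{\check{N}}$), but as written it is incomplete, and once you fix the sign issues above the coinduction argument closes it in one line.
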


The proofs of Theorem \ref{*thm} and Theorem \ref{!*thm} are given in the next two sections.

We state the corresponding conjectural formula for the !-generalized semi-infinite cohomology functor:

\begin{Conjecture}
	For all $\mu \in \Lambda$,
	\begin{equation*}
	\semiinf_!(\n(\K), M)^{\mu} \cong \textup{C}^{\bullet}(U^{\textup{KD}}_q(\n), \textup{KL}^{\kappa}_G (M))^{\mu}.
	\end{equation*}
\end{Conjecture}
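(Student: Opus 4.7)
The natural approach is to derive this conjecture from Theorem \ref{!*thm} by a procedure parallel to how Theorem \ref{*thm} is obtained, but with $\check{B}$-invariants replaced by derived $\check{B}$-coinvariants, reflecting the Koszul-type duality between $U^{\textup{Lus}}_q$ and $U^{\textup{KD}}_q$.

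Specifically, I would apply Theorem \ref{!*thm} to each shifted weight $\mu + \check{\nu}$ for $\check{\nu} \in \check{\Lambda}$ and assemble them into the matching isomorphism
$$\bigoplus_{\check{\nu} \in \check{\Lambda}}\semiinf_{!*}(\n(\K),M)^{\mu+\check{\nu}} \;\cong\; \bigoplus_{\check{\nu} \in \check{\Lambda}}\textup{C}^\bullet(\u_q(\n),\textup{KL}^{\kappa}_G(M))^{\mu+\check{\nu}}.$$
Both sides carry a natural $\check{B}$-structure: on the left from the ABG theory used in the proof of Theorem \ref{*thm}, and on the right from the Frobenius embedding $\textup{Rep}(\check{B}) \hookrightarrow U^{\textup{Lus}}_q(\b)\textup{-mod}$ of \eqref{intro:q_Frob}. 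Whereas taking $\check{B}$-invariants recovers the $*$-variant, the plan is to take derived $\check{B}$-coinvariants to recover the $!$-variant.

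On the quantum side, the identification of derived $\check{N}$-coinvariants of $\textup{C}^\bullet(\u_q(\n),-)$ with $\textup{C}^\bullet(U^{\textup{KD}}_q(\n),-)$ should come from the Hochschild--Serre spectral sequence associated to the surjection $U^{\textup{KD}}_q(\n) \twoheadrightarrow \u_q(\n)$: whereas the Frobenius embedding lets one compute $U^{\textup{Lus}}_q(\n)$-cohomology as $\check{N}$-invariants of $\u_q(\n)$-cohomology, the dual surjection together with a PBW-type description of its kernel should compute $U^{\textup{KD}}_q(\n)$-cohomology as $\check{N}$-coinvariants. On the geometric side, the task is to identify derived $\check{B}$-coinvariants of the $\check{\Lambda}$-indexed sum of $\semiinf_{!*}$-components with $\semiinf_!$. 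This should follow by reinterpreting the colimit defining $\semiinf_!$, whose transitions involve $j_{\check{\lambda},*}\star_I j_{-\check{\lambda},*}$, in terms of the natural maps $\delta \to j_{\check{\lambda},*}\star_I \textup{Sat}((V_{\check{\lambda}})^*)$ that occur in the construction of $\textup{IC}^{\frac{\infty}{2},-}$ (see \eqref{transition map}); coinvariants precisely kill the IC portion, leaving the standard portion recorded by $j_{-\check{\lambda},*}$.

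The main obstacle is ensuring that derived $\check{B}$-coinvariants behave well at the level of DG categories. Coinvariants do not commute with arbitrary colimits, so convergence of the $\check{\Lambda}$-indexed direct sum must be controlled, e.g.\ via a carefully chosen filtration or pro-completion, in contrast to the $\check{B}$-invariants case where the direct sum over $\check{\Lambda}$ converts cleanly into a completed tensor product with $\textup{Fun}(\check{N})$. A secondary difficulty is setting up the ``dual'' Frobenius structure on the quantum side in a manner compatible with the ABG $\check{B}$-action on the Kac--Moody side, which will likely require the Koszul duality between $U^{\textup{Lus}}_q(\n)$ and $U^{\textup{KD}}_q(\n^-)$ that is implicit in the construction of $U^{\frac{1}{2}}_q(\g)\textup{-mod}$ in Section \ref{sec:quantum_groups}.
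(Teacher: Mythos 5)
The statement you are addressing is stated in the paper as a \emph{Conjecture}, and the paper offers no proof of it: after proving Theorem \ref{*thm} via $\check{B}$-invariants of the assembled $\semiinf_{!*}$-components, the author simply records the $!$-version as an open problem, remarking that even its negative-level analogue \eqref{intro:neg_formula} remains conjectural in \cite{quantumsemiinf}. So there is no paper proof to compare against; your proposal is an attempt to settle an open question, and it should be judged on that basis.

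Your strategy of dualizing the $\check{B}$-invariants argument to a $\check{B}$-coinvariants argument is a natural first guess, but it runs into a concrete problem on the quantum side that you only gesture at. The sequence \eqref{intro:q_Frob} expresses $\u_q(\b)\textup{-mod}$ as a quotient of $U^{\textup{Lus}}_q(\b)\textup{-mod}$ by $\textup{Rep}(\check{B})$, and this is precisely what converts $\check{B}$-invariants of $\textup{C}^\bullet(\u_q(\n),-)$ into $\textup{C}^\bullet(U^{\textup{Lus}}_q(\n),-)$ via the coinduction argument in the proof of Theorem \ref{!*thm}. There is no analogous exact sequence with $U^{\textup{KD}}_q(\b)\textup{-mod}$ in the middle. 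Instead, one has a surjection $U^{\textup{KD}}_q(\n) \twoheadrightarrow \u_q(\n)$ with kernel $K$. The Lyndon--Hochschild--Serre spectral sequence you invoke then reads $H^p\bigl(\u_q(\n), H^q(K,\C)\otimes M\bigr) \Rightarrow H^{p+q}\bigl(U^{\textup{KD}}_q(\n), M\bigr)$, since $K$ acts trivially on $M = \textup{KL}^{\kappa}_G(M)$. This is not a statement about $\check{N}$-coinvariants of $\textup{C}^\bullet(\u_q(\n),M)$; it is a statement about twisting by the cohomology ring $H^\bullet(K,\C)$ before taking $\u_q(\n)$-cohomology, and the resulting spectral sequence does not a priori degenerate into the shape you need. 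Identifying $H^\bullet(K,\C)$ and controlling the spectral sequence is a genuine missing ingredient, not a routine dualization of the Frobenius argument.

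The geometric half of your sketch has a parallel gap: the heuristic that ``coinvariants kill the IC portion and leave the standard portion'' is not justified by anything in the paper, and the ABG dictionary used for Theorem \ref{*thm} identifies $\check{B}$-\emph{invariants} of $s_*\O(\check{B})$ with $\delta_{G(\O)}$, which feeds into the $*$-version, not the $!$-version. To run your argument you would need to show that the derived $\check{B}$-coinvariants of $\overset{\bullet}{\mathcal{F}_{!*}}$, under the ABG equivalence, produce the kernel defining $\semiinf_!$ (colimit of $j_{\check{\lambda},*}\star_I j_{-\check{\lambda},*}$), including the correct cohomological shifts coming from $H_\bullet(\check{N},-)$; none of this is supplied. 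You correctly flag the failure of coinvariants to commute with the $\check{\Lambda}$-indexed direct sum, but the more basic issue is that you have not established either side of the claimed identification, so the proposal as written does not constitute a proof.
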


\subsection{Proof of Theorem \ref{!*thm}}
By the definition of the !*-functor
\begin{equation*}
\semiinf_{!*}(\n(\K), M)^{\mu} \cong \underset{\check{\lambda} \in \check{\Lambda}^+}{\textup{colim}} \, \langle \mathbb{W}^{\kappa', *}_{-\mu-2\rho}[\dim(G/B)]\, ,\,\, j_{\check{\lambda}, *} \star_I \textup{Sat}((V_{\check{\lambda}})^*) \star_{G(\O)} M \rangle_I.
\end{equation*}
We have by Lemma \ref{pairingconv}
\begin{equation*}
\begin{split}
\langle \mathbb{W}^{\kappa', *}_{-\mu-2\rho}&[\dim(G/B)]\, ,\, j_{\check{\lambda}, *} \star_I \textup{Sat}((V_{\check{\lambda}})^*) \star_{G(\O)} M \rangle_I \\ &\cong \langle j_{-\check{\lambda}, *} \star_I \mathbb{W}^{\kappa', *}_{-\mu-2\rho}[\dim(G/B)]\, ,\, \textup{Sat}((V_{\check{\lambda}})^*) \star_{G(\O)} M \rangle_I,
\end{split}
\end{equation*}
and by \eqref{*Wakimoto} the latter is isomorphic to
$$\langle \mathbb{W}^{\kappa', *}_{-\check{\lambda}-\mu-2\rho}[\dim(G/B)]\, ,\, \textup{Sat}((V_{\check{\lambda}})^*) \star_{G(\O)} M \rangle_I.$$
When $\check{\lambda}$ is sufficiently dominant, the above pairing becomes
$$\langle \mathbb{M}^{\kappa',\vee}_{-\check{\lambda}-\mu-2\rho}[\dim(G/B)]\, ,\, \textup{Sat}((V_{\check{\lambda}})^*) \star_{G(\O)} M \rangle_I,
$$
which is then isomorphic to 
\begin{equation*}
\Hom_{\hat{\g}_{\kappa}\textup{-mod}^I}\left(\mathbb{D}_I\,( \mathbb{M}^{\kappa',\vee}_{-\check{\lambda}-\mu -2\rho}[\dim(G/B)])\, ,\,  \textup{Sat}((V_{\check{\lambda}})^*) \star_{G(\O)} M \right)
\end{equation*}
by \eqref{pairing}.

Since $\textup{Sat}((V_{\check{\lambda}})^*) \star_{G(\O)} M$ is $G(\O)$-equivariant, by the left adjointness of $\textup{Av}^{G(\O)/I}_!$ and \eqref{duality_average}, the above is isomorphic to
\begin{equation*}
\Hom_{\hat{\g}_{\kappa}\textup{-mod}^{G(\O)}}\left(\mathbb{D}_{G(\O)}\, \textup{Av}^{G(\O)/I}_*( \mathbb{M}^{\kappa',\vee}_{-\check{\lambda}-\mu -2\rho}[\dim(G/B)])\, ,\,  \textup{Sat}((V_{\check{\lambda}})^*) \star_{G(\O)} M \right),
\end{equation*}
and from Lemma \ref{dualVermaWeyl} this becomes
\begin{equation} \label{hom_in_positive_level}
\Hom_{\hat{\g}_{\kappa}\textup{-mod}^{G(\O)}}\left(\mathbb{D}_{G(\O)}\,\mathbb{V}^{\kappa',\vee}_{-w_0(\check{\lambda}+\mu)}\, ,\,  \textup{Sat}((V_{\check{\lambda}})^*) \star_{G(\O)} M \right).
\end{equation}

Recall the dual quantum Weyl module $\VV^{\vee}_{\nu}$, defined as the image of $\mathbb{V}^{\kappa',\vee}_{\nu}$ under the negative level Kazhdan-Lusztig functor; i.e. $\VV^{\vee}_{\nu} := \textup{KL}_G(\mathbb{V}^{\kappa',\vee}_{\nu})$. Then with $\mathbb{D}^q (\VV^{\vee}_{\nu}) \cong \VV_{-w_0(\nu)}$ and the definition of $\textup{KL}^{\kappa}_G$, we deduce that \eqref{hom_in_positive_level} is isomorphic to
\begin{equation*}
\Hom_{U^{\textup{Lus}}_q(\g)}\left(\VV_{\mu+\check{\lambda}}, \textup{KL}^{\kappa}_G(\textup{Sat}((V_{\check{\lambda}})^*) \star_{G(\O)} M)\right).
\end{equation*}
Since $\textup{Sat}(V_{\check{\nu}}) \cong \textup{IC}_{\check{\nu}}$ is Verdier self-dual, $\mathbb{D}_{G(\O)}(\textup{Sat}(V_{\check{\nu}}) \star M) \cong \textup{Sat}(V_{\check{\nu}}) \star \mathbb{D}_{G(\O)}(M)$ by the same argument as in the proof of \eqref{dualconv}. Combined with \eqref{convolution_Fr} we have
\begin{equation*}
\begin{split}
&\textup{KL}^{\kappa}_G(\,\textup{Sat}((V_{\check{\lambda}})^*) \star_{G(\O)} M\,) \cong \mathbb{D}^q \circ \textup{KL}_G (\,\textup{Sat}((V_{\check{\lambda}})^*) \star_{G(\O)} \mathbb{D}_{G(\O)}M\,) \cong \\
\cong &\, \mathbb{D}^q( \textup{Fr}_q((V_{\check{\lambda}})^*))\otimes \textup{KL}^{\kappa}_G(M) \cong \textup{Fr}_q(V_{\check{\lambda}})\otimes \textup{KL}^{\kappa}_G(M).
\end{split}
\end{equation*}

So far we have shown that
\begin{equation} \label{formula_semiinf_side}
\semiinf_{!*}(\n(\K), M)^{\mu} \cong \underset{\check{\lambda} \in \check{\Lambda}^+}{\textup{colim}}\,\Hom_{U^{\textup{Lus}}_q(\g)}(\VV_{\mu+\check{\lambda}}, \textup{Fr}_q(V_{\check{\lambda}})\otimes \textup{KL}^{\kappa}_G(M)).
\end{equation} 

On the quantum group side, we follow the same derivation as in the proof of \cite[Theorem 3.2.2]{quantumsemiinf}. Recall $\overset{\bullet}{\u}_q(\b)\textup{-mod}$ the category of representations of the small quantum Borel with full Lusztig's torus. The coinduction functor $\textup{CoInd}^{U^{\textup{Lus}}_q(\b)}_{\overset{\bullet}{\u}_q(\b)}$ is the right adjoint to the restriction functor from $U^{\textup{Lus}}_q(\b)\textup{-mod}$ to $\overset{\bullet}{\u}_q(\b)\textup{-mod}$. The functor $\textup{CoInd}^{U^{\textup{Lus}}_q(\b)}_{\overset{\bullet}{\u}_q(\b)}$ sends the trivial representation to
$$\textup{CoInd}^{U^{\textup{Lus}}_q(\b)}_{\overset{\bullet}{\u}_q(\b)}(\C) \cong \textup{Fr}_q(\O_{\check{B}/\check{T}})
$$
by \cite[Section 3.1.4]{quantumsemiinf}. Moreover by \cite[Proposition 3.1.2]{ABBGM} we have $\O_{\check{B}/\check{T}} \simeq \underset{\check{\lambda}\in \check{\Lambda}^+}{\textup{colim}}\, \C_{-\check{\lambda}}\otimes V_{\check{\lambda}}$ as $\check{B}$-modules. Then
\begin{equation*}
\begin{split}
& \textup{C}^{\bullet}(\u_q(\n), \textup{KL}^{\kappa}_G (M))^{\mu} := \Hom_{\overset{\bullet}{\u}_q(\b)}(\C_\mu, \, \textup{Res}^{U^{\textup{Lus}}_q(\g)}_{\overset{\bullet}{\u}_q(\b)}\textup{KL}^{\kappa}_G (M)) \\
\cong\, & \Hom_{U^{\textup{Lus}}_q(\b)}(\C_\mu, \, \textup{CoInd}^{U^{\textup{Lus}}_q(\b)}_{\overset{\bullet}{\u}_q(\b)} \circ \textup{Res}^{U^{\textup{Lus}}_q(\g)}_{\overset{\bullet}{\u}_q(\b)}\textup{KL}^{\kappa}_G (M)) \\
\cong\, & \Hom_{U^{\textup{Lus}}_q(\b)}(\C_\mu, \, \textup{Fr}_q(\O_{\check{B}/\check{T}})\otimes \textup{Res}^{U^{\textup{Lus}}_q(\g)}_{\overset{\bullet}{\u}_q(\b)}\textup{KL}^{\kappa}_G (M)) \\
\cong\, & \underset{\check{\lambda} \in \check{\Lambda}^+}{\textup{colim}}\,\Hom_{U^{\textup{Lus}}_q(\b)}(\C_\mu, \, \textup{Fr}_q(\C_{-\check{\lambda}}\otimes V_{\check{\lambda}})\otimes \textup{Res}^{U^{\textup{Lus}}_q(\g)}_{\overset{\bullet}{\u}_q(\b)}\textup{KL}^{\kappa}_G (M)) \\
\cong\, &  \underset{\check{\lambda} \in \check{\Lambda}^+}{\textup{colim}}\,\Hom_{U^{\textup{Lus}}_q(\b)}(\C_{\mu +\check{\lambda}}, \, \textup{Fr}_q( V_{\check{\lambda}})\otimes \textup{Res}^{U^{\textup{Lus}}_q(\g)}_{\overset{\bullet}{\u}_q(\b)}\textup{KL}^{\kappa}_G (M))\\
\cong\, &  \underset{\check{\lambda} \in \check{\Lambda}^+}{\textup{colim}}\,\Hom_{U^{\textup{Lus}}_q(\g)}(\VV_{\mu+\check{\lambda}}, \textup{Fr}_q(V_{\check{\lambda}})\otimes \textup{KL}^{\kappa}_G(M)),
\end{split}
\end{equation*}
which agrees with \eqref{formula_semiinf_side}.

\subsection{Proof of Theorem \ref{*thm}} \label{sec:proof_*thm}
The proof follows the same idea as in \cite[Section 3.3]{quantumsemiinf} for the negative level case.

Let $\overset{\bullet}{\mathcal{F}_{!*}}$ denote the object $$\underset{\check{\nu} \in \check{\Lambda}}{\bigoplus}\,\left( \underset{\check{\lambda} \in \check{\Lambda}^+}{\textup{colim}}\, j_{\check{\nu}+\check{\lambda}, *} \star \textup{Sat}((V_{\check{\lambda}})^*)\right)$$ in the category $\textup{D-mod}(\Gr_G)^I$.

By the theory of Arkhipov-Bezrukavnikov-Ginzburg \cite{ABG}, we have an equivalence $$\textup{D-mod}(\Gr_G)^I \simeq \textup{IndCoh}((\textup{pt} \times_{\check{\g}} \tilde{\check{\mathcal{N}}}) / \check{G}).$$ Under this equivalence, $\overset{\bullet}{\mathcal{F}_{!*}}$ corresponds to $s_* \O(\check{B})$, where
$$s: \textup{pt} / \check{B} \simeq (\check{G}/\check{B})/\check{G} \hookrightarrow (\textup{pt} \times_{\check{\g}} \tilde{\check{\mathcal{N}}}) / \check{G}.
$$
It follows that $\overset{\bullet}{\mathcal{F}_{!*}}$ is equipped with a $\check{B}$-action, such that 
the $\check{B}$-invariant of $\overset{\bullet}{\mathcal{F}_{!*}}$ corresponds to $s_* \O(0)$, where $\O(0)$ is the trivial line bundle on $\textup{pt} / \check{B}$.

Again by the equivalence in \cite{ABG}, 
the delta function $\delta_{G(\O)}$ on the identity coset in $\Gr_G$ corresponds to $s_* \O(0)$. Consequently, 
the $\check{B}$-invariant of $\overset{\bullet}{\mathcal{F}_{!*}}$ is identified with $\delta_{G(\O)}$.

Now, we consider the object $$ \langle \mathbb{W}^{\kappa', *}_{-\mu-2\rho}[\dim G/B] \, ,\, \overset{\bullet}{\mathcal{F}_{!*}} \star_{G(\O)} M \rangle_I
$$ in $\textup{Vect}$. From the above discussion, this object inherits a $\check{B}$-action, such that the $\check{B}$-invariant is equal to $\langle \mathbb{W}^{\kappa', *}_{-\mu-2\rho}[\dim G/B] \, ,\, M \rangle_I \cong \semiinf(\n(\K), M)^{\mu}.$
By Theorem \ref{!*thm}, we have
\begin{equation*}
\begin{split}
\langle \mathbb{W}^{\kappa', *}_{-\mu-2\rho}[\dim G/B]& \, ,\, \overset{\bullet}{\mathcal{F}_{!*}} \star_{G(\O)} M \rangle_I \cong \underset{\check{\nu} \in \check{\Lambda}}{\bigoplus}\, \semiinf_{!*}(\n(\K), M)^{\mu+\check{\nu}}\\ &\cong \underset{\check{\nu} \in \check{\Lambda}}{\bigoplus}\,\textup{C}^{\bullet}(\u_q(\n), \textup{KL}^{\kappa}_G (M))^{\mu+\check{\nu}}.
\end{split}
\end{equation*}
It follows that $\semiinf(\n(\K), M)^{\mu}$ is isomorphic to the $\check{B}$-invariant of the space $$\underset{\check{\nu} \in \check{\Lambda}}{\bigoplus}\,\textup{C}^{\bullet}(\u_q(\n), \textup{KL}^{\kappa}_G (M))^{\mu+\check{\nu}} \cong \Hom_{\u_q(\b)}(\C_{\mu}, \textup{KL}^{\kappa}_G (M)),$$ which is identified with $$\Hom_{U^{\textup{Lus}}_q(\b)}(\C_{\mu}, \textup{KL}^{\kappa}_G (M)) \cong \textup{C}^{\bullet}(U^{\textup{Lus}}_q(\n), \textup{KL}^{\kappa}_G (M))^{\mu}.
$$
This proves the theorem.

\section{An algebraic approach at irrational level} \label{chapter:irr_lvl}
The theory we developed so far is greatly simplified if we assume the level is irrational. As the quantum parameter $q$ is no longer a root of unity, Lusztig's, Kac-De Concini's and the small quantum groups all coincide, with the category of representations being semi-simple. The category of $G(\O)$-equivariant representations of the affine Lie algebra is semi-simple at irrational level as well, and in this case the (negative level) Kazhdan-Lusztig functor is an equivalence tautologically. That Kazhdan-Lusztig functor is a monoidal functor can be seen as a reformulation of Drinfeld's theorem on Knizhnik-Zamolodchikov associators \cite[Part III and Part IV]{KazLus}.

Fix an irrational level $\kappa$ throughout this chapter. We will give an algebraic proof of the formula that appears in Theorem \ref{!*thm} at an irrational level. 

\subsection{BGG-type resolutions}
Let $\ell$ be the usual length function on the Weyl group $W$ of $\g$, and recall the dot action of the Weyl group on weights by $w \cdot \mu := w(\mu + \rho)-\rho$. Then the celebrated \textit{Bernstein-Gelfand-Gelfand} (BGG) \textit{resolution} is stated as follows:

\begin{Proposition} \label{th:BGG}
	Let $\lambda$ be a dominant integral weight of $\g$. Then we have a resolution of $V_{\lambda}$ given by
	$$0 \to M_{w_0 \cdot \lambda} \to \dotsb \to \bigoplus_{\ell(w)=i} M_{w \cdot \lambda} \to \dotsb \to M_{\lambda} \twoheadrightarrow V_{\lambda}.$$
\end{Proposition}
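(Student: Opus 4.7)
This is the classical Bernstein-Gelfand-Gelfand resolution, and my plan is to give a proof matching the paper's geometric style: realize the complex on the flag variety $G/B$ via Beilinson-Bernstein localization, where it becomes a Cousin-type resolution for the Schubert stratification. Under the twisted Beilinson-Bernstein equivalence used in the proof of Lemma \ref{VermaWeyl}, the Verma module $M_{w\cdot\lambda}$ corresponds to the $!$-extension of the appropriately twisted structure sheaf of the Schubert cell $X_w := BwB/B$, which has dimension $\ell(w)$, while $V_\lambda \cong \Gamma(G/B,\LL_{-w_0\lambda})$ by Borel-Weil-Bott. It thus suffices to resolve the line bundle on all of $G/B$ by these $!$-extensions, indexed by cell dimension.

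The natural candidate is the Cousin complex attached to the stratification $G/B = \bigsqcup_{w\in W} X_w$: its term in the $i$-th slot is precisely the direct sum $\bigoplus_{\ell(w)=\ell(w_0)-i}$ of the relevant $!$-extensions, and the differentials are the intrinsic boundary maps of the stratification. Globalizing via $\Gamma$ yields the BGG complex up to reindexing. Equivalently, one may build the differentials purely algebraically: $\Hom(M_{w'\cdot\lambda},M_{w\cdot\lambda})$ is one-dimensional whenever $w'$ covers $w$ in the Bruhat order, and signs making $d^2=0$ can be fixed via the combinatorics of the Hasse diagram of $W$ (every Bruhat square has exactly four sides).

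The main obstacle is acyclicity in positive homological degrees. At the level of characters, the Weyl character formula
\[\sum_{w\in W}(-1)^{\ell(w)}\,\textup{ch}(M_{w\cdot\lambda}) = \textup{ch}(V_\lambda)\]
only delivers the correct Euler characteristic. The geometric formulation resolves this cleanly: the Schubert stratification is by affine cells, so the Cousin complex of any coherent (or twisted D-module) sheaf is automatically acyclic and computes the global pushforward, which shows that our complex is indeed a resolution. Alternatively, one can argue purely inside category $\mathcal{O}$ via the strong linkage principle, which forces any surviving homology to be a sum of simples $L_{w\cdot\lambda}$ with $w\neq 1$; the character identity above then precludes the existence of any such classes, leaving only the augmentation $M_\lambda \twoheadrightarrow V_\lambda$ in degree zero.
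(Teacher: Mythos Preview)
The paper does not actually prove this proposition: it is simply recalled as the ``celebrated Bernstein--Gelfand--Gelfand resolution'' and used as a black box to compute $\semiinf(\n(\K),\mathbb{V}^\kappa_\lambda)$ at irrational level. Your main geometric approach---realizing the complex as the Cousin (Grothendieck--Cousin) resolution of the line bundle $\mathcal{L}_{-w_0(\lambda)}$ along the Schubert stratification of $G/B$ and then applying Beilinson--Bernstein---is correct and is precisely Kempf's classical proof; it also meshes nicely with the conventions the paper sets up in the proof of Lemma~\ref{VermaWeyl}. One bookkeeping remark: in the paper's normalization the $!$-extension from the big cell $Bw_0B/B$ globalizes to $M_\lambda$ (since the $*$-extension gives $M_\lambda^\vee$), so the correspondence is $M_{w\cdot\lambda}\leftrightarrow X_{ww_0}$ rather than $M_{w\cdot\lambda}\leftrightarrow X_w$; your phrase ``up to reindexing'' already absorbs this.

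Your alternative purely algebraic sketch, however, has a genuine gap. Strong linkage together with the Weyl character identity only pins down the Euler characteristic $\sum_i(-1)^i\,\textup{ch}\,H_i=\textup{ch}\,V_\lambda$; it does not by itself force $H_i=0$ for $i>0$. Nothing in that argument rules out, for instance, a copy of $L_{s\cdot\lambda}$ appearing in both $H_1$ and $H_2$ and cancelling in the alternating sum. The original BGG proof closes this gap by an inductive construction (the ``weak'' BGG resolution via relative/parabolic category $\mathcal{O}$), not by a character count. Since your Cousin argument already delivers exactness cleanly, I would drop the algebraic alternative or replace it with a pointer to the standard inductive proof.
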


Note that $\mathbb{V}^{\kappa}_{\lambda} \cong (\mathbb{V}^{\kappa}_{\lambda})^{\vee}$ since it is irreducible (when $\kappa$ is irrational). We apply the induction functor $(\cdot)^{\kappa}$ to the BGG resolution and then take the contragredient dual to get
\begin{equation} \label{eq:dualBGG}
\mathbb{V}^{\kappa}_{\lambda} \hookrightarrow (\mathbb{M}^{\kappa}_{\lambda})^{\vee} \to \dotsb \to \bigoplus_{\ell(w)=i}(\mathbb{M}^{\kappa}_{w \cdot \lambda})^{\vee} \to \dotsb \to (\mathbb{M}^{\kappa}_{w_0 \cdot \lambda})^{\vee} \to 0.
\end{equation}
Combine Theorem \ref{th:WakiVer} and \eqref{eq:dualBGG} we get 
\begin{equation*}
\mathbb{V}^{\kappa}_{\lambda} \hookrightarrow \mathbb{W}^{\kappa,w_0}_{\lambda} \to \dotsb \to \bigoplus_{\ell(w)=i}\mathbb{W}^{\kappa,w_0}_{w \cdot \lambda} \to \dotsb \to \mathbb{W}^{\kappa,w_0}_{w_0 \cdot \lambda} \to 0.
\end{equation*}

Now we can compute the semi-infinite cohomology of Weyl modules at irrational level by applying Proposition \ref{th:WakiBRST}:

\begin{Corollary} \label{MainThm}
	For $\lambda \in \Lambda^+$, we have an isomorphism of $\hat{\t}_{\kappa + \textup{shift}}$-modules
	$$H^{\frac{\infty}{2}+i}(\n(\K), \mathbb{V}^{\kappa}_{\lambda}) \cong \bigoplus_{\ell(w)=i} \pi^{\kappa + \textup{shift}}_{w \cdot \lambda}.$$
\end{Corollary}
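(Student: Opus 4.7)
The plan is to run the semi-infinite cohomology functor on the Wakimoto resolution displayed immediately before the corollary and read off the answer from Proposition \ref{th:WakiBRST}. Concretely, the resolution
\begin{equation*}
\mathbb{V}^{\kappa}_{\lambda} \hookrightarrow \mathbb{W}^{\kappa,w_0}_{\lambda} \to \cdots \to \bigoplus_{\ell(w)=i}\mathbb{W}^{\kappa,w_0}_{w \cdot \lambda} \to \cdots \to \mathbb{W}^{\kappa,w_0}_{w_0 \cdot \lambda} \to 0
\end{equation*}
exhibits $\mathbb{V}^{\kappa}_{\lambda}$ as quasi-isomorphic to the complex $\mathbb{W}^{\kappa,w_0,\bullet}$ whose term in degree $i$ is $\bigoplus_{\ell(w)=i}\mathbb{W}^{\kappa,w_0}_{w\cdot\lambda}$.

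The key point is that the type $w_0$ Wakimoto modules are acyclic for the semi-infinite cohomology functor $\semiinf(\n(\K),-)$: by Proposition \ref{th:WakiBRST}, each $\semiinf(\n(\K), \mathbb{W}^{\kappa,w_0}_{w\cdot\lambda})$ is isomorphic to the single Fock module $\pi^{\kappa+\textup{shift}}_{w\cdot\lambda}$ placed in cohomological degree $0$, with the $\hat{\t}_{\kappa+\textup{shift}}$-action recorded. Consequently the hypercohomology spectral sequence
\begin{equation*}
E_1^{p,q} = H^{\frac{\infty}{2}+q}(\n(\K), \mathbb{W}^{\kappa,w_0,p}) \Rightarrow H^{\frac{\infty}{2}+p+q}(\n(\K),\mathbb{V}^{\kappa}_{\lambda})
\end{equation*}
is concentrated on the row $q=0$ and therefore degenerates at $E_2$. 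The differentials on the row $q=0$ are induced from the maps of the Wakimoto resolution, which after passage to semi-infinite cohomology become morphisms between direct sums of Fock modules.

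The final step is to observe that these differentials must vanish for weight reasons: since all Fock modules $\pi^{\kappa+\textup{shift}}_{w\cdot\lambda}$ appearing in degree $i$ have distinct $\t$-highest weights $w\cdot\lambda$ (a standard fact for regular $\lambda$, and which extends to all dominant $\lambda$ by a weight-space argument using the absence of $\hat{\t}_{\kappa+\textup{shift}}$-intertwiners between Fock modules of distinct highest weights at irrational level), and the differentials in the BGG resolution connect $M_{w\cdot\lambda}$ only to $M_{w'\cdot\lambda}$ with $\ell(w')>\ell(w)$, there are no nonzero $\hat{\t}_{\kappa+\textup{shift}}$-equivariant maps between the relevant Fock modules. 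The $i$-th semi-infinite cohomology of $\mathbb{V}^{\kappa}_{\lambda}$ is therefore read off directly as $\bigoplus_{\ell(w)=i}\pi^{\kappa+\textup{shift}}_{w\cdot\lambda}$.

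The main obstacle I anticipate is verifying the acyclicity/degeneration rigorously: one must justify that Proposition \ref{th:WakiBRST} applies verbatim at positive rational or irrational levels (the cited result is stated at negative level in \cite{FG}, but the Wakimoto construction and its BRST computation are formally the same at any non-critical $\kappa$), and check that the differentials in the resolved complex indeed respect the $\hat{\t}_{\kappa+\textup{shift}}$-module structure so that the weight-based vanishing of maps between Fock modules gives collapse. Both are routine once the setup is unwound, but this is where all the content beyond the BGG input sits.
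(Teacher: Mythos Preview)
Your approach is correct and matches the paper's: the paper simply says the corollary follows ``by applying Proposition \ref{th:WakiBRST}'' to the Wakimoto resolution, and your spectral sequence argument with degeneration via weight considerations is exactly how one unpacks that sentence. Your concern about the level is moot here, since by the paper's conventions (Section \ref{basic_setup}) an irrational $\kappa$ is simultaneously positive and negative, so Proposition \ref{th:WakiBRST} applies directly without modification.
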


Now we turn to the quantum group side. When $q$ is not a root of unity, the quantum Weyl module $\mathcal{V}_{\lambda}$ coincides with the irreducible module $\LL_{\lambda}$, constructed by the usual procedure of taking irreducible quotient of the quantum Verma module $\MM_{\lambda}$.
Analogous to the non-quantum case, we have the BGG resolution for representations of $U_q(\g)$:
$$0 \to \MM_{w_0 \cdot \lambda} \to \dotsb \to \bigoplus_{\ell(w)=i} \MM_{w \cdot \lambda} \to \dotsb \to \MM_{\lambda} \twoheadrightarrow \mathcal{V}_{\lambda}.$$
As a consequence, we deduce 
\begin{equation} \label{eq:quantum_inv}
H^i({U_q(\n)}, \mathcal{V}_{\lambda}) = \bigoplus_{\ell(w)=i} \Hom_{U_q(\n)}(\C, \MM_{w \cdot \lambda}) = \bigoplus_{\ell(w)=i} \C_{w \cdot \lambda}.
\end{equation}

\subsection{Commutativity of the diagram} \label{sec:comm_diag}
Recall the tautological equivalence $\textup{KL}_T: \hat{\t}_{\kappa \textup{+shift}} \textup{-mod}^{T(\O)} \to \textup{Rep}_q(T)$ which is induced by the assignment
$$\pi^{\kappa \textup{+shift}}_{\lambda} \mapsto \C_{\lambda}.$$
We will verify the commutativity of the following diagram
\begin{equation} \label{diagram_irr}
\xymatrix @C=6pc{
	\hat{\g}_{\kappa} \textup{-mod}^{G(\O)} \ar[d]_{\textup{KL}_G} \ar[r]^{\semiinf(\n (\K), -)} & \hat{\t}_{\kappa \textup{+shift}} \textup{-mod}^{T(\O)} \ar[d]^{\textup{KL}_T} \\
	U_q(\g)\textup{-mod} \ar[r]^{\textup{C}^\bullet(U_q(\n), -)} & \textup{Rep}_q(T)}
\end{equation}
which clearly implies
$$\semiinf(\n(\K), M)^{\mu} \cong \textup{C}^{\bullet}(U_q(\n), \textup{KL}_G (M))^{\mu}
$$ for all $\mu$.

We will need the theory of compactly generated categories.
A detailed treatise of the theory is given in \cite{Lurie}. For a brief review of definitions and facts, see \cite{DrGa}.
We recall the following proposition from \cite{Lurie}.

\begin{Proposition} [\cite{Lurie} Proposition 5.3.5.10] \label{Lu}
	Let $\mathsf{C}$ be a cocomplete category, $\mathsf{D}$ be a small category, and $F: \mathsf{D} \to \mathsf{C}$ be a functor. Then $F$ uniquely induces a continuous functor $\bar{F}: \mathsf{Ind(D)} \to \mathsf{C}$ with $\bar{F}|_{\mathsf{D}} = F$. Here, $\mathsf{Ind(D)}$ denotes the ind-completion of the category $\mathsf{D}$.
\end{Proposition}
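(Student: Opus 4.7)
The plan is to invoke the universal property that defines $\mathsf{Ind}(\mathsf{D})$, namely that it is the free cocompletion of $\mathsf{D}$ under filtered colimits. Concretely, I would first recall that every object $X \in \mathsf{Ind}(\mathsf{D})$ admits a canonical presentation as a filtered colimit $X \simeq \mathrm{colim}_{i \in I} d_i$ of objects $d_i \in \mathsf{D}$, where $I$ is the filtered category of arrows $d \to X$ from objects $d \in \mathsf{D}$ (the Yoneda slice). The construction is then forced: define
$$\bar{F}(X) := \mathrm{colim}_{i \in I} F(d_i),$$
the colimit taken in $\mathsf{C}$, which exists because $\mathsf{C}$ is cocomplete.

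Next, I would verify well-definedness and functoriality. The key input is the computation of Hom-spaces in $\mathsf{Ind}(\mathsf{D})$: for $X \simeq \mathrm{colim}_i d_i$ and $Y \simeq \mathrm{colim}_j d_j'$, one has
$$\mathrm{Hom}_{\mathsf{Ind}(\mathsf{D})}(X,Y) \simeq \lim_i \mathrm{colim}_j \mathrm{Hom}_{\mathsf{D}}(d_i, d_j').$$
Using this, any morphism $X \to Y$ unpacks into compatible systems of morphisms in $\mathsf{D}$, which $F$ then transports to a compatible system in $\mathsf{C}$, producing a canonical map $\bar{F}(X) \to \bar{F}(Y)$ between the colimits. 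Checking that independent choices of presentation yield canonically isomorphic values is a cofinality argument: two presentations of $X$ as filtered colimits over categories $I$ and $I'$ are compared through the universal indexing category above, and cofinality of the comparison functors guarantees that $F$ sends them to the same colimit in $\mathsf{C}$.

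To see continuity of $\bar{F}$, observe that a filtered colimit $X \simeq \mathrm{colim}_k X_k$ in $\mathsf{Ind}(\mathsf{D})$ with $X_k \simeq \mathrm{colim}_{i \in I_k} d_{k,i}$ presents $X$ as a filtered colimit over the Grothendieck construction $\int_k I_k$. Applying $\bar{F}$ and using that filtered colimits in $\mathsf{C}$ commute with each other yields $\bar{F}(X) \simeq \mathrm{colim}_k \bar{F}(X_k)$. The restriction $\bar{F}|_{\mathsf{D}} \simeq F$ holds because each $d \in \mathsf{D}$ is terminal in its own slice $d/\mathsf{D}$, making the defining colimit trivial.

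Finally, uniqueness is immediate: any continuous extension $G$ with $G|_{\mathsf{D}} = F$ must satisfy $G(X) \simeq G(\mathrm{colim}_i d_i) \simeq \mathrm{colim}_i G(d_i) \simeq \mathrm{colim}_i F(d_i) \simeq \bar{F}(X)$. The main technical obstacle is not the construction itself but navigating the $\infty$-categorical bookkeeping --- coherently choosing the presentations of objects, verifying that the comparison between presentations is genuinely cofinal in the $\infty$-categorical sense rather than only at the level of homotopy categories, and establishing the Hom-formula above --- all of which are handled in Lurie \cite{Lurie} using the machinery of presheaf $\infty$-categories and the fact that $\mathsf{Ind}(\mathsf{D})$ sits as the full subcategory of $\mathrm{Fun}(\mathsf{D}^{\mathrm{op}}, \mathrm{Spc})$ on filtered colimits of representables.
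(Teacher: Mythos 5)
The paper does not actually prove this proposition; it cites it directly from Lurie (HTT, Proposition 5.3.5.10) as a standard black box, so there is no "paper's own proof" to compare against. Your sketch is essentially a correct reproduction of the universal-property argument one finds in the reference: construct $\bar{F}$ as a pointwise left Kan extension along the Yoneda embedding (your colimit over the slice $\mathsf{D}/X$ is exactly this), check continuity using that colimits commute with colimits, and get uniqueness for free.

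One technical remark worth making explicit: the claim that the slice $\mathsf{D}/X$ is itself filtered requires $\mathsf{D}$ to admit finite colimits (equivalently, that $\mathsf{D}$ is closed under finite colimits inside $\mathsf{Ind}(\mathsf{D})$). Without that, one only knows that $X$ is a filtered colimit of \emph{some} diagram in $\mathsf{D}$, not the canonical one indexed by the full slice, and the well-definedness/cofinality bookkeeping becomes more delicate. In the paper's application $\mathsf{D}$ is the subcategory of compact objects in a cocomplete stable DG category, which is stable and idempotent-complete, so this hypothesis holds automatically; but in a general-purpose statement of the proposition it is worth noting, since Lurie's proof avoids the issue by working through $\mathcal{P}(\mathsf{D})$ and the formal theory of Kan extensions rather than choosing explicit presentations.
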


We shall take $\mathsf{D}$ to be the full subcategory of compact generators of $\hat{\g}_{\kappa} \textrm{-mod}^{G(\O)}$, i.e., the subcategory whose objects consist of Weyl modules $\mathbb{V}^{\kappa}_{\lambda}$ for dominant integral weights $\lambda$. Then from the definition of compactly generated categories, we have $\mathsf{Ind(D)} = \hat{\g}_{\kappa} \textrm{-mod}^{G(\O)}$. Let $\mathsf{C}$ be the category $\textup{Rep}_q(T)$.

To prove the commutativity of \eqref{diagram_irr}, by Proposition \ref{Lu} it suffices to show that the two functors $\textup{C}^\bullet(U_q(\n),-) \circ \textup{KL}_G$ and $\textup{KL}_T \circ \semiinf(\n(\K), -)$ restricted to $\mathsf{D}$ are the same.

Since $\textup{KL}_G (\mathbb{V}^{\kappa}_{\lambda}) = \mathcal{V}_{\lambda}$, from \eqref{eq:quantum_inv} and Corollary \ref{MainThm} we get
$$H^i(\textup{C}^\bullet(U_q(\n),-) \circ \textup{KL}_G (\mathbb{V}^{\kappa}_{\lambda})) \cong H^i(\textup{KL}_T \circ \semiinf(\n(\K), -)(\mathbb{V}^{\kappa}_{\lambda})) \quad \forall i.$$
As objects in the category $\textup{Rep}_q(T)^\heartsuit$ have no nontrivial extensions, this shows that the two functors $\textup{C}^\bullet(U_q(\n),-) \circ \textup{KL}_G$ and $\textup{KL}_T \circ \semiinf(\n(\K), -)$ have the same image for objects in $\mathsf{D}$.

Let $\lambda$ and $\mu$ be arbitrary dominant integral weights of $\g$, and let $f$ be a morphism in $\Hom_{\hat{\g}_{\kappa}}(\mathbb{V}^{\kappa}_{\lambda}, \mathbb{V}^{\kappa}_{\mu})$. It remains to show that the following two morphisms
$$\textup{C}^\bullet(U_q(\n),-) \circ \textup{KL}_G (f)$$
and
$$\textup{KL}_T \circ \semiinf(\n(\K), -)(f)$$
are identical.

Since $\kappa$ is irrational, by \cite[Proposition~27.4]{KazLus} all Weyl modules $\mathbb{V}^{\kappa}_{\lambda}$ are irreducible. Then $\Hom_{\hat{\g}_{\kappa}}(\mathbb{V}^{\kappa}_{\lambda}, \mathbb{V}^{\kappa}_{\mu})$ vanishes when $\lambda \neq \mu$. Now, $\Hom_{\hat{\g}_{\kappa}}(\mathbb{V}^{\kappa}_{\lambda}, \mathbb{V}^{\kappa}_{\lambda})$ is one-dimensional and generated by the identity morphism $\textup{Id}_{\mathbb{V}^{\kappa}_{\lambda}}$. Clearly we have
\begin{equation*}
\textup{C}^\bullet(U_q(\n),-) \circ \textup{KL}_G \,(\textup{Id}_{\mathbb{V}^{\kappa}_{\lambda}})  
=  \textup{Id}_{C^{\bullet}} 
=  \textup{KL}_T \circ \semiinf(\n(\K), -)(\textup{Id}_{\mathbb{V}^{\kappa}_{\lambda}}),
\end{equation*}
where $C^{\bullet}$ is the complex in $\textup{Rep}_q(T)$ with $C^i = \bigoplus_{\ell(w)=i} \C_{w \cdot \lambda}$.
It follows that $\textup{C}^\bullet(U_q(\n),-) \circ \textup{KL}_G$ and $\textup{KL}_T \circ \semiinf(\n(\K), -)$ agree on morphisms in $\mathsf{D}$. Therefore the two functors are isomorphic when restricted to $\mathsf{D}$ and so the diagram \eqref{diagram_irr} commutes.

\bibliographystyle{alphanum}
\bibliography{semiinf_positive_arxiv}

\end{document}